\ifpdf \usepackage[unicode,pdftex]{hyperref} \input glyphtounicode\pdfgentounicode=1
\else\usepackage[unicode,dvipdfm]{hyperref}\fi
\pgfplotsset{width=11.5cm, compat=1.8}
\newtheorem{thm}{Theorem}[section]
\newtheorem{prop}[thm]{Proposition}
\newtheorem{lem}[thm]{Lemma}
\newtheorem{cor}[thm]{Corollary}
\theoremstyle{definition}
\newtheorem{deff}[thm]{Definition}
\newtheorem{rmk}[thm]{Remark}
\newtheorem{exam}[thm]{Example}
\newtheorem*{mcs}{Modified Assertion of Severi}
\newcommand{\enm}[1]{\ensuremath{#1}}          %
\newcommand{\op}[1]{\operatorname{#1}}
\newcommand{\cal}[1]{\mathcal{#1}}
\newcommand{\CC}{\enm{\mathbb{C}}}
\newcommand{\NN}{\enm{\mathbb{N}}}
\newcommand{\ZZ}{\enm{\mathbb{Z}}}
\newcommand{\FF}{\enm{\mathbb{F}}}
\newcommand{\PP}{\enm{\mathbb{P}}}
\newcommand{\Dd}{\enm{\cal{D}}}
\newcommand{\Ee}{\enm{\cal{E}}}
\newcommand{\Ff}{\enm{\cal{F}}}
\newcommand{\Gg}{\enm{\cal{G}}}
\newcommand{\Hh}{\enm{\cal{H}}}
\newcommand{\Ii}{\enm{\cal{I}}}
\newcommand{\Ll}{\enm{\cal{L}}}
\newcommand{\Mm}{\enm{\cal{M}}}
\newcommand{\Nn}{\enm{\cal{N}}}
\newcommand{\Oo}{\enm{\cal{O}}}
\newcommand{\Xx}{\enm{\cal{X}}}
\renewcommand{\phi}{\varphi}
\renewcommand{\theta}{\vartheta}
\renewcommand{\epsilon}{\varepsilon}
\newcommand{\Aut}{\op{Aut}}
\newcommand{\dashdownarrow}{\mathrel{\rotatebox[origin=t]{-270}{\reflectbox{$\dashrightarrow$}}}}
\renewcommand{\to}[1][]{\xrightarrow{\ #1\ }}
\newcommand{\GG}{\ensuremath{\mathbb{G}}}
\newcommand{\h}{\ensuremath{\mathcal}}
\newcommand{\HL}{\ensuremath{\mathcal{H}^\mathcal{L}_}}
\newcommand{\w}{\widetilde}
\newcommand{\ce}{C_\mathcal{E}}
\newcommand{\st}{\tilde}
\newcommand{\vni}{\vskip 4pt \noindent}
\newcommand{\al}{\ensuremath{\alpha}}
\newcommand{\uin}{\rotatebox[origin=c]{90}{$\in$}}
\newcommand{\csi}{Castelnuovo-Severi inequality}
\title[Hilbert scheme of linearly normal curves]
{Hilbert scheme of  linearly normal curves in $\mathbb{P}^r$ with index of speciality five and beyond}
\thanks{
{ The author was supported in part by National Research Foundation of South Korea (2022R1I1A1A01055306).} }
\author[Changho Keem]{Changho Keem}
\address{
Department of Mathematics,
Seoul National University\\
Seoul 151-742,  
South Korea}
\email{ckeem1@gmail.com}
\subjclass{Primary 14C05, Secondary 14H10}
\keywords{Hilbert scheme, algebraic curves, linearly normal, special linear series, index of speciality}
\date{\today}
\begin{document}
\begin{abstract}
We study the Hilbert scheme of smooth, irreducible, non-degenerate and linearly normal curves of degree $d$ and genus $g$ in $\mathbb{P}^r$ ($r\ge 3$) whose complete and very ample hyperplane linear series $\mathcal{D}$ have  relatively small index of speciality $i(\mathcal{D})=g-d+r$. In particular we completely determine the existence as well as the non-existence  of Hilbert schemes of linearly normal curves $\mathcal{H}^{\mathcal{L}}_{d,g,r}$ {\bf for every possible triples $(d,g,r)$} with $i(\mathcal{D})=5$ and $r\ge 3$. 
We also determine the irreducibility of the Hilbert scheme $\mathcal{H}^{\mathcal{L}}_{g+r-5,g,r}$ when the genus $g$ is near to the minimal possible value with respect to the dimension of the projective space $\mathbb{P}^r$   for which $\mathcal{H}^{\mathcal{L}}_{g+r-5,g,r}\neq\emptyset$,  say $r+9\le g\le r+11$.
In the course of proofs of key results, we show the existence of linearly normal curves
of degree $d\ge g+1$ with arbitrarily given index of speciality with some mild restriction on 
the genus $g$. 
\end{abstract}
\maketitle		
\section{
An overview,  motivation and preliminary set up}

Let $\h{H}_{d,g,r}$ be the Hilbert scheme of smooth, irreducible and non-degenerate curves of degree $d$ and genus $g$ in $\PP^r$. 
We denote by $\mathcal{H}^\mathcal{L}_{d,g,r}$ the union of components of $\mathcal{H}_{d,g,r}$ whose general element corresponds to a linearly normal curve $C\subset \PP^r$.
By abuse of terminology, we say that a component of the Hilbert scheme $\h{H}_{d,g,r}$ has index of speciality $\alpha$ if 
the hyperplane series $\h{D}=g^r_d$ of a general element of the component has index of speciality $\alpha$, i.e. 
$$h^1(C,\h{D})=g-d+\dim |\h{D}| =\alpha \ge g-d+r.$$
\noindent
It is possible that there may exist a component of the Hilbert scheme $\h{H}_{d,g,r}$ which has index of speciality strictly greater than $g-d+r$; cf. \cite{Keem}. In other words, there may exist a (component of) Hilbert scheme entirely consisting of non-linearly normal curves.
However the index of speciality of any component of $\HL{d,g,r}$ is $g-d+r$, which we define as the {\bf index of speciality of a Hilbert scheme of linearly normal curves}. 

\vni
\noindent
We recall the following modified assertion of Severi (\cite{Sev}), which has been given attention by some authors recently; cf. \cite{JPAA, KK3}.
\begin{mcs}  A nonempty $\mathcal{H}^\mathcal{L}_{d,g,r}$ is irreducible for any triple $(d,g,r)$ in the Brill-Noether range  $$\rho (d,g,r)=g-(r+1)(g-d+r)\ge 0.$$ 
\end{mcs}

\noindent
We note that the modified assertion of Severi makes sense only if the index of speciality of Hilbert scheme of linearly normal curves $\HL{d,g,r}$ is non-negative.
In other words, $\HL{d,g,r}\neq\emptyset$ only when $g-d+r\ge 0$. Through a preliminary attempt to settle down the Modified Assertion of Severi \cite{Sev, KK3}, one has a quite extensive knowledge about certain basics of the Hilbert scheme of linearly normal curves of small index of speciality $\alpha \le 4$, which can be summarized as follows; cf. \cite{lengthy, JPAA, KK3, speciality}.

\begin{rmk}\label{eresult} We assume $r\ge 3$.
\begin{itemize}
\item[(i)]{$\alpha =0$}: $\mathcal{H}^\mathcal{L}_{g+r,g,r}\neq\emptyset$ and is irreducible.
\item[(ii)]{ $\alpha=1$}: $\mathcal{H}^\mathcal{L}_{g+r-1,g,r}\neq\emptyset$ and is irreducible for $g\ge r+1$ and is empty for $g\le r$.
\item[(iii)]{$\alpha=2$}: $\mathcal{H}^\mathcal{L}_{g+r-2,g,r}\neq\emptyset$ and is irreducible for every $g\ge r+3$ and is empty for
 $g\le r+2$.

\item[(iv)]{$\alpha=3$}: 
\begin{enumerate}
\item[(a)] $\h{H}^\h{L}_{g+r-3,g,r}=\emptyset$ for $g\le r+4$ and $r\ge5$.
\item[(b)] For $g\ge r+5$ and $r\ge 5$, $\h{H}^\h{L}_{g+r-3,g,r}\neq\emptyset$ unless $g=r+6$ and $r\ge 10$.
\item[(c)]$\h{H}^\h{L}_{g+r-3,g,r}$ is irreducible for every $g\ge 2r+3$ and is reducible for almost all $g$ in the range $r+5\le g\le 2r+2$.
\item[(d)] Somewhat stronger results hold for curves in $\PP^3$ and $\PP^4$. We refer \cite[Remark 3.1]{lengthy} for details.
\end{enumerate}
\item[(v)]
{$\alpha =4$}: \begin{enumerate}
\item[(a)] $\HL{g+r-4,g,r}=\emptyset$ for $g\le r+6$, $r\ge 5$. 
\item[(b)] $\HL{g+r-4,g,r}\neq\emptyset$
for  $g=r+7$, $r\ge 5$.
\item[(c)] $\HL{g+r-4,g,r}\neq\emptyset$ if and only if $~5\le r\le 8$ for $g=r+8$, $r\ge 5$ .
\item[(d)] $\HL{g+r-4,g,r}\neq\emptyset$ if and only if $~5\le r\le 11$ for $g=r+9$, $r\ge 5$. 
\item[(e)] $\HL{g+r-4,g,r}\neq\emptyset$
for any $g\ge r+10$, $r\ge 5$.
\item[(f)] Furthermore, the irreducibility has been fully determined in all the peculiar cases  in (c) as well as several cases in (d).
\item[(g)] For curves in $\PP^3$ and $\PP^4$ with $\al =4$ -- which are not explicitly mentioned in the above -- we refer \cite[Remark 2.1]{speciality} and references therein for details.
\end{enumerate}

\vni

\end{itemize}
\end{rmk}
\vni Results mentioned in Remark \ref{eresult} indicate that the Modified Assertion of Severi may continue to hold even beyond the Brill-Noether range $\rho (d,g,r)\ge 0$ espeically  when the index of speciality is rather small. For example, in the case $\al=3$, the Brill-Noether range $\rho(g+r-3,g,r)\ge 0$ is $g\ge 3r+3$. However the irreducibility holds for every $g\ge 2r+3$. 
\noindent
\vni
There are a couple of  advantages in studying  the Hilbert scheme of linearly normal curves according to a given fixed  index of speciality {\bf $\alpha$}. One notes that  the residual series of the very ample and complete hyperplane series $g^r_d$ corresponding to a general element of a component of 
$\HL{d,g,r}$ of index of speciality $\al$  has the fixed dimension $\alpha -1$, regardless of the values of the genus $g$ and the 
degree $d$. Therefore one may work  more effectively in exploring out several important properties of Hilbert schemes under consideration by studying the family of curves in a fixed projective space $\PP^{\alpha -1}$ induced by the residual series of the (complete) hyperplane series.  

\vni
To be more specific, in the cases $\alpha =2, 3, 4$, 
the residual series of hyperplane series of a general element of a component of $\HL{d,g,r}$ are pencils, nets or webs. Therefore one may expect  to squeeze out several basic properties of the Hilbert scheme $\HL{d,g,r}$ such as existence, irreducibility, gonality of a general element in each component,  as well as the number of moduli by considering the corresponding property of Hurwitz space,  Severi variety of plane curves or the Severi variety of curves on a surface of low degree in $\PP^3$. 
\vni
There is another advantage in studying the Hilbert scheme $\HL{d,g,r}$ according to its index of speciality. Unless the genus $g$ is fairly high with respect to the degree $d$ of the projective curve in $\PP^r$, a general curve in a component of $\HL{d,g,r}$ defined by a very ample complete hyperplane series $\h{D}$ is by no means extremal or nearly extremal i.e. the genus $g$ is rather far below the maximal possible genus of curves of degree $d$ in $\PP^r$. However the residual series $\h{D}^\vee$ of $\h{D}$ with respect to the canonical series may induce an extremal or a nearly extremal curve in $\PP^{\al -1}$. If this occurs, studying (extrinsic) curves defined by the residual series $\h{D}^\vee$ may become relatively easier than directly handling the original curve in $\PP^r$.  We will encounter this sort of phenomenon several times in this article.

\vni
In this paper we study the Hilbert scheme of linearly normal curves with index of speciality $\alpha =5$. Equivalently we study Hilbert schemes of non-degenerate, smooth and linearly normal curves in $\PP^r$ ($r\ge 3$)  of degree $d=g+r-5$. We completely determine the existence (i.e. non-emptyness)  as well as the non-existence (if any) of $\HL{d,g,r}$ for every triple
$(d,g,r)$ with $g-d+r=5$; cf. Theorem \ref{sub6x} and Figure \ref{fig:existence} in the end of \S 3. In order to show the non-emptiness of $\HL{d,g,r}$ for a given triple $(d,g,r)$, it is sufficient to demonstrate the existence of a complete and very ample linear series $g^r_d$ on a particular curve. We will carry this out on a suitably chosen general $k$-gonal curve of genus $g$. 

\vni
We also determine the irreducibility of $\HL{d,g,r}$ when the genus $g$ is near to the minimal possible value  $r+9$ with respect to $r$ for which $\HL{d,g,r}\neq\emptyset$; cf. Theorem \ref{sub6x}, Proposition \ref{g=r+9}, Theorem \ref{r+10}, Proposition \ref{d13r4}, Theorem \ref{$g=13$}, Theorem \ref{2r+6}, Proposition \ref{r+117}, Theorem \ref{g=r+12} and Figure \ref{fig:irreducibility} in the end of \S 6.
Happily, when $g$ is equal or near to $r+9$, the curve $C\subset\PP^r$ corresponding to a general element of a component of $\HL{d,g,r}$ or its residual curve $C^\vee$ -- which is by definition the curve induced by the residual series of the very ample and complete hyperplane series of $C$ -- necessarily lies on a surface of relatively small degree. This is a consequence of a  classical result which says that an integral projective curve in $\PP^r$ whose (arithmetic) genus $g$  is in between
the first and second Castelnuovo genus bound -- which we call a nearly extremal curve -- necessarily lies on a surface of minimal or near to minimal degree. By analyzing the family of such curves on these special
surfaces, one may determine the irreducibility of $\HL{d,g,r}$ for some particular triples $(d,g,r)$ especially when the genus $g$ is near to the minimal possible value with 
respect to the dimension of the ambient projective space $\PP^r$.  In carrying out this process,  non-trivial results such as the the irreducibility of the Severi
variety of nodal curves on Hirzebruch surfaces plays an important role.
We will be more specific with our road map for the determination of the irreducibility of $\HL{d,g,r}$ in corresponding sections.

{


\vni
For notation and conventions, we  follow those in \cite{ACGH} and \cite{ACGH2}; e.g. $\pi (d,r)$ is the maximal possible arithmetic genus of an irreducible,  non-degenerate and reduced curve of degree $d$ in $\PP^r$ which is usually referred as the first Castelnuovo genus bound. We shall refer to curves $C\subset\PP^r$ of degree $d$ whose (arithmetic) genus equals $\pi(d,r)$ as {\bf extremal curves}. $\pi_1(d,r)$ is the so-called the second Castelnuvo genus bound which is the maximal possible arithmetic genus of  an irreducible, non-degenerate and reduced curve of degree $d$ in $\PP^r$ not lying on a  surface of minimal degree $r-1$; cf. \cite[page 99]{H1}, \cite[page 123]{ACGH}.
We shall call curves $C\subset\PP^r$ of degree $d$ and (arithmetic) genus $g$ such that $\pi_1(d,r)<g\le\pi(d,r)$ {\bf nearly extremal curves}.
\vni
Following classical terminology, a linear series of degree $d$ and dimension $r$ on a smooth curve $C$ is denoted by $g^r_d$.
A base-point-free linear series $g^r_d$ ($r\ge 2$) on a smooth curve $C$ is called {\bf birationally very ample} when the morphism 
$C \rightarrow \mathbb{P}^r$ induced by  the $g^r_d$ is generically one-to-one onto (or is birational to) its image curve.
A base-point-free linear series $g^r_d$ on $C$  is said to be compounded of an involution ({\bf compounded} for short) if the morphism induced by the linear series $g^r_d$ gives rise to a non-trivial covering map $C\rightarrow C'$ of degree $k\ge 2$. 
Throughout we work exclusively over the field of complex numbers.

\vni
The organization of this paper is as follows. In the remainder of this section,  we briefly recall some other terminologies and our preliminary set up.
In the next section we list up several auxiliary results and prove a couple of lemmas which are necessary for our study. In the third section, we determine all the triples $(d,g,r)$ with $r\ge 3$ for which $\mathcal{H}^\mathcal{L}_{g+r-5,g,r}\neq\emptyset$. 

\vni
Subsequent three sections are devoted to  determining the irreducibility of a certain non-empty $\mathcal{H}^\mathcal{L}_{g+r-5,g,r}$. Specifically we determine the
irreducibility of $\HL{2r+4,r+9,r}$ for every $r\ge 3$ in \S 4; cf Proposition \ref{g=r+9}. We stress that $\HL{2r+4,r+9,r}$ is the first non-empty Hilbert scheme of index of speciality $\alpha =5$ with the smallest possible genus $g=r+9$ with respect to the dimension of the ambient projective space $\PP^r$ unless $r=4$. In \S 5,  we determine the irreducibility of $\HL{g+r-5,g,r}$
for $g=r+10$ ($r\ge 3$ and $r\neq 4$); cf. Theorems \ref{r+10} and \ref{$g=13$}).  In \S 6, we also determine the irreducibility of $\HL{g+r-5,g,r}$ for $g=r+11$, $r\ge 7$
(Theorem \ref{2r+6},  Proposition \ref{r+117}) as well as for  $g=r+12$, $9\le r\le 14$ (Theorem \ref{g=r+12}).

\vni
In the final section we state and discuss generalized statements, which partially covers those results for low index of specialities. The results we state there have a certain numerical restriction. However, it extends the existence of linearly normal curves to an arbitrarily given index of speciality $\al$ such that $\al\ge r+1$. We also list up certain of irreducibility results in several  low genus cases which may be obtained by utilizing  similar kind techniques we use for lower index of speciality $\al\le 5$.

\vni
{
 
 \vni
In the appendix, we provide proofs of several results which seem to have been known to people rather widely; however the author could not trace explicit sources in the literature and therefore provide proofs for the convenience of readers.

\noindent
\vni
We now briefly recall several terminologies, fundamental results and basic frameworks for our study which are  well-known; cf. \cite{ACGH2}  or \cite[\S 1 and \S 2]{AC2}.

\vni
Let $\mathcal{M}_g$ be the moduli space of smooth curves of genus $g$. Given an isomorphism class $[C] \in \mathcal{M}_g$ corresponding to a smooth irreducible curve $C$, there exist a neighborhood $U\subset \mathcal{M}_g$ of the class $[C]$ and a smooth connected variety $\mathcal{M}$ which is a finite ramified covering $h:\mathcal{M} \to U$, as well as  varieties $\mathcal{C}$, $\mathcal{W}^r_d$ and $\mathcal{G}^r_d$ proper over $\mathcal{M}$ with the following properties:
\begin{enumerate}
\item[(1)] $\xi:\mathcal{C}\to\mathcal{M}$ is a universal curve, i.e. for every $p\in \mathcal{M}$, $\xi^{-1}(p)$ is a smooth curve of genus $g$ whose isomorphism class is $h(p)$,
\item[(2)] $\mathcal{W}^r_d$ parametrizes the pairs $(p,L)$ where $L$ is a line bundle of degree $d$ and $h^0(L) \ge r+1$ on $\xi^{-1}(p)$,
\item[(3)] $\mathcal{G}^r_d$ parametrizes the couples $(p, \mathcal{D})$, where $\mathcal{D}$ is possibly an incomplete linear series of degree $d$ and dimension $r$ on $\xi^{-1}(p)$.
\end{enumerate}

\noindent
For a complete linear series $\h{E}$ on a smooth curve $C$, the residual series $|K_C-\h{E}|$ is denoted by $\h{E}^\vee$.
{Given an irreducible family $\h{F}\subset\h{G}^r_d$ with some geometric meaning, whose general member is complete,  the closure of the family $\{\h{E}^\vee| \h{E}\in \h{F}, ~\h{E} \textrm{ is complete}\}\subset \h{W}^{g-d+r-1}_{2g-2-d}$ is  denoted by $\h{F}^\vee$.

\noindent
Let $\widetilde{\mathcal{G}}$ ($\widetilde{\mathcal{G}}_\mathcal{L}$,  respectively) be  the union of components of $\mathcal{G}^{r}_{d}$ whose general element $(p,\mathcal{D})$ corresponds to a very ample (very ample and complete, respectively) linear series $\mathcal{D}$ on the curve $C=\xi^{-1}(p)$. By recalling that an open subset of $\mathcal{H}_{d,g,r}$ consisting of elements corresponding to smooth irreducible and non-degenerate curves is a $\mathbb{P}\textrm{GL}(r+1)$-bundle over an open subset of $\widetilde{\mathcal{G}}$, {\bf the irreducibility of $\widetilde{\mathcal{G}}$ guarantees the irreducibility of $\mathcal{H}_{d,g,r}$. Likewise, the irreducibility of $\widetilde{\mathcal{G}}_\mathcal{L}$ ensures the irreducibility of 
 $\mathcal{H}_{d,g,r}^\mathcal{L}$.}

\noindent
\vni
We recall the following  fundamental fact regarding the scheme $\mathcal{G}^{r}_{d}$ which is also well-known; cf. \cite[2.a]{H1} and \cite[Ch. 21, \S 3, 5, 6, 11, 12]{ACGH2}. 
\begin{prop}\label{facts}
For non-negative integers $d$, $g$ and $r$, let $$\rho(d,g,r):=g-(r+1)(g-d+r)$$ be the Brill-Noether number.
The dimension of any component of $\mathcal{G}^{r}_{d}$ is at least $$\lambda(d,g,r):=3g-3+\rho(d,g,r), $$ 
hence the dimension of any component of $\h{H}_{d,g,r}$ is at least
$$\h{X}(d,g,r):=\lambda (d,g,r)+\dim\PP GL(r+1).$$ Moreover, if $\rho(d,g,r)\ge 0$, there exists a unique component $\mathcal{G}_0$ of $\widetilde{\mathcal{G}}$ which dominates $\mathcal{M}$(or $\mathcal{M}_g$).
	\end{prop}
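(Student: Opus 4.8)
The plan is to take the schemes $\mathcal{C}$, $\mathcal{W}^r_d$ and $\mathcal{G}^r_d$ over $\mathcal{M}$ as already constructed with the listed properties, and to deduce the three assertions from (i) the local determinantal structure of these loci and (ii) the classical existence and irreducibility theorems of Brill--Noether theory. First I would recall the relative determinantal presentation. After passing to the smooth cover $\mathcal{M}\to U$ so that a relative Poincar\'e bundle $\mathcal{P}$ is available on $\mathcal{C}\times_{\mathcal{M}}\Pic^d(\mathcal{C}/\mathcal{M})$, fix a relative effective divisor $\Delta$ of large degree $m$ on $\mathcal{C}$ and form the push-forward bundles $\mathcal{E}:=\xi_*\mathcal{P}(\Delta)$ and $\mathcal{F}:=\xi_*(\mathcal{P}(\Delta)|_\Delta)$, of ranks $d+m-g+1$ and $m$. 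Fibrewise the restriction map $\phi:\mathcal{E}\to\mathcal{F}$ has kernel $H^0(L)$ and cokernel $H^1(L)$, so $\mathcal{W}^r_d$ is precisely the locus where $\operatorname{rank}\phi$ drops by $r+1$. Passing to the Grassmann bundle $\mathbf{G}:=\mathbf{G}(r+1,\mathcal{E})$ over $\Pic^d(\mathcal{C}/\mathcal{M})$ and composing the tautological subbundle $\mathcal{S}\hookrightarrow\pi^*\mathcal{E}$ with $\pi^*\phi$, the locus $\mathcal{G}^r_d$ becomes the zero scheme of the induced section of $\mathcal{H}om(\mathcal{S},\pi^*\mathcal{F})$, a bundle of rank $(r+1)m$.

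I would then invoke the elementary fact that the vanishing locus of a section of a rank-$N$ bundle, when non-empty, is cut out locally by $N$ equations and hence has every component of codimension at most $N$. Since $\dim\mathbf{G}=(4g-3)+(r+1)(d+m-g-r)$, subtracting $(r+1)m$ leaves $4g-3-(r+1)(g-d+r)=3g-3+\rho=\lambda$, so every component of $\mathcal{G}^r_d$ has dimension at least $\lambda(d,g,r)$; the analogous degeneracy-locus bound for $\phi$ itself gives the same estimate for $\mathcal{W}^r_d$. For $\mathcal{H}_{d,g,r}$ I would use the stated $\PP GL(r+1)$-bundle structure: each component of $\mathcal{H}_{d,g,r}$ lies over a component of $\widetilde{\mathcal{G}}\subseteq\mathcal{G}^r_d$ and the two dimensions add, whence $\dim\geq\lambda+\dim\PP GL(r+1)=\mathcal{X}(d,g,r)$.

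For the final assertion, existence of a component of $\widetilde{\mathcal{G}}$ dominating $\mathcal{M}$ follows from the Brill--Noether existence theorem of Kempf and Kleiman--Laksov: for $\rho\geq 0$ the general fibre $G^r_d(C)$ is non-empty, and in our range (with $r\geq 3$) a general such series is very ample, so the closure of the very ample locus dominates $\mathcal{M}$. Uniqueness I would establish in two cases. When $\rho\geq 1$, the Gieseker--Petri theorem renders $G^r_d(C)$ smooth of pure dimension $\rho$ for general $C$, while Fulton--Lazarsfeld connectedness of $W^r_d(C)$ forces $G^r_d(C)$ to be connected; smooth and connected together yield irreducibility of the general fibre, hence a single dominating family $\mathcal{G}_0$. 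When $\rho=0$, the general fibre is a reduced finite set whose points are permuted transitively by the monodromy action (Eisenbud--Harris), so they trace out one irreducible family over a dense open $U\subseteq\mathcal{M}$.

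The main obstacle is the uniqueness statement. The dimension bounds are essentially formal once the determinantal presentation is in place, but showing that exactly one component dominates $\mathcal{M}$ rests on the deepest inputs of the theory---the Gieseker--Petri smoothness theorem together with the connectedness and monodromy results---rather than on any elementary argument. A secondary technical point is the possible non-existence of a global Poincar\'e bundle on $\mathcal{C}\times_{\mathcal{M}}\Pic^d$, which is exactly why one works over the smooth finite cover $\mathcal{M}\to U$; since the determinantal loci are unchanged by twisting $\mathcal{P}$ by a line bundle pulled back from the base, this causes no real difficulty.
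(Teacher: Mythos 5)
Your proposal is correct, and it is essentially the same argument the paper relies on: the paper gives no proof of Proposition \ref{facts}, quoting it as well known from \cite[2.a]{H1} and \cite[Ch.\ 21]{ACGH2}, and your reconstruction is exactly the standard proof found there --- the relative Poincar\'e-bundle/Grassmann-bundle determinantal presentation giving every component of $\mathcal{G}^r_d$ codimension at most $(r+1)m$ in a $\bigl((4g-3)+(r+1)(d+m-g-r)\bigr)$-dimensional ambient space, hence dimension at least $\lambda(d,g,r)$, together with the $\PP GL(r+1)$-bundle structure for the Hilbert scheme bound. Your treatment of the uniqueness statement (Gieseker--Petri smoothness plus Fulton--Lazarsfeld connectedness when $\rho\ge 1$, Eisenbud--Harris monodromy when $\rho=0$, and very ampleness of a general series for $r\ge 3$ to place the dominating component inside $\widetilde{\mathcal{G}}$) likewise matches the cited sources, so there is nothing to correct.
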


\section{Glossary of relevant generalities and auxiliary results }

\noindent
In this section we prepare a couple of lemmas and collect several facts which are relevant to our study.  Even though almost all the results we quote in this section are quite well known, we list them up for the convenience of readers. We first recall the following generalities regarding the Severi variety of (nodal) curves on a  Hirzebruch surface $\mathbb{F}_e$, i.e. a geometrically ruled surface over $\PP^1$ with invariant $e\ge 0$. 
\begin{deff}
\begin{enumerate}
\item[(i)]
Given a Hirzebruch surface $\mathbb{F}_e=\PP (\h{E})\stackrel{\pi}{\rightarrow}\PP^1$ where $\h{E}=\h{O}_{\PP^1}\oplus\h{O}_{\PP^1}(-e)$, let $C_0$ be the section with $C_0^2=-e\le 0$ and $f$ be a fibre of $\pi$.  Given a very ample linear system $\h{L}=|aC_0+bf|$ on $\mathbb{F}_e$, let
 $$p_a(\h{L})=(a-1)(b-1-\frac{1}{2}ae)$$ be the arithmetic genus of an integral curve belonging to $\h{L}$.

\item[(ii)]
Given an integer $0\le g\le p_a(\h{L})$, we set
 $\delta =p_a(\h{L})-g$. We denote by $$\Sigma_{\h{L}, \delta}\subset \h{L}=\PP(H^0(X,\h{L}))$$ the (equi-singular) Severi variety which is the closure of the locus of integral curves in the linear system $\h{L}$ whose singular locus consists of exactly $\delta$ nodes. 
\item[(iii)]
Let $\Sigma_{\h{L},g}$ be the  (equi-generic) Severi variety which is the closure of the locus of integral curves of geometric genus $g$  in the linear system $\h{L}$. 
\end{enumerate}
\end{deff}

\noindent
\vni
\noindent
We shall make use the following well-known results adopted for our specific situation. Almost all the results quoted below are known to be true in a more general context. Readers are advised to refer \cite{DS}, which provides an excellent treatment on Severi varieties on rational surfaces in general. 

\begin{rmk}\label{Severi}
\begin{enumerate}
\item[(i)] A general member of every irreducible component of the equi-generic Severi variety $\Sigma_{\h{L},g}$ is a nodal curve; cf. \cite[Proposition 2.1]{H2}, \cite[Theorem B2, p.177]{DS} and \cite[pp. 105-117]{H3}.  
\item[(ii)] The equi-singular Severi variety $\Sigma_{\h{L}, \delta}$ is {\bf irreducible} of the expected dimension $$\dim|\h{L}|-\delta=\frac{(a+1)(2b+2-ae)}{2}-1-\delta$$
if non-empty; cf. \cite[Proposition 2.11, Theorem 3.1]{tyomkin}.
\end{enumerate}
\end{rmk}
 \vni
Mainly for dimension count of certain families of curves or linear series under our investigation, we make a note of
 the following facts regarding  a surface $S\subset \PP^{n+1}$ of minimal degree $n\ge 2$.

\begin{rmk}\label{minimal} For an irreducible and  non-degenerate surface $S\subset\PP^{n+1}$ of minimal degree $n$, one of the following  holds; cf. \cite[IV, Exercises pp 53-54]{Beauville}.

\begin{itemize}
\item[(i)]
$S$ is a smooth rational normal scroll, in which case we have;
\begin{equation}\label{lsdimension}
\dim |aH+bL| =\frac{1}{2}a(a+1)n+(a+1)(b+1)-1
\end{equation}
where $H$ (resp. $L$) is the class of a hyperplane section (resp. the class a line of the ruling).
For any reduced and irreducible curve $C\subset\PP^{n+1}$ of degree $d$ contained in the linear system $|aH+bL|$, we have
\begin{equation}\label{scrolldegree}
d=(aH+bL)\cdot H=na+b
\end{equation} and by adjunction formula
\begin{equation}\label{scrollgenus}
p_a(C)
=\frac{1}{2}\,a \left( a-1 \right)\cdot n  + \left( a-1 \right) 
 \left( b-1 \right).
 \end{equation}
Throughout, whenever we deal with a rational normal surface scroll $S\subset\PP^{n+1}$ of minimal degree, $H$ (resp. $L$) always denotes the class of a hyperplane section (resp. the class a line of the ruling). 
\item[(ii)] $S$ is  a Veronese surface in $\PP^5$ in which case any reduced irreducible curve in $S$ has even degree $2\cdot e$ having the arithmetic genus $\frac{(e-1)(e-2)}{2}$. 
\item[(iii)]
$S$ is a cone over a rational normal curve in a hyperplane $\PP^{n}\subset\PP^{n+1}$, which is the image of the Hirzebruch surface $\mathbb{F}_n=\mathbb{P}(\h{O}_{\PP^1}\oplus\h{O}_{\PP^1}(-n))$. We denote by  $h$ (resp. $f$ ) the class in $\textrm{Pic}(\mathbb{F}_n)$ of the tautological bundle $\h{O}_{\mathbb{F}_n}(1)$ (resp. of a fibre); $f^2=0, f\cdot h=1, h^2=n, h=C_0+nf$.  The morphism $\mathbb{F}_n\rightarrow S\subset\PP^{n+1}$ given by the complete linear system $|h|$ is an embedding outside $C_0$ -- the curve with  negative self-intersection -- and contracts $C_0$ to the vertex $P$ of the cone $S$; $$C_0^2=-n, C_0\in |h-nf|, K_{\mathbb{F}_n}=-2h+(n-2)f=-2C_0+(-n-2)f.$$ Let $C\subset S$ be a reduced and non-degenerate curve of degree $d$ and let $\w{C}$ be the strict transformation of $C$ under the desingularization $\widetilde{S}\cong\mathbb{F}_n\rightarrow S$. 
~Setting $k=\w{C}\cdot f$, we have $\w{C}\equiv kh+(d-nk)f$. The adjunction formula gives
\begin{equation}\label{cone}
p_a(\w{C})=1/2\, \left( k-1 \right)  \left( 2\,d-nk-2 \right).
\end{equation}
We  further remark that 
\begin{equation}\label{conevertex}
0\le \w{C}\cdot C_0=\w{C}\cdot (h-nf)=d-nk = m
\end{equation} where $m$ is the multiplicity of $C$ at the vertex  $P$ of the cone $S$.

\end{itemize}
\end{rmk}

\vni Regarding families of curves with high genus -- especially nearly extremal curves -- we quote the following result which we shall use occasionally for determining the irreducibility of a Hilbert scheme; cf. \cite[Corollary 3.16, page 100]{H1}.

\begin{prop}[Harris]\label{nearlyextremal} Suppose $r\neq 3$ or $5$, and $\pi(d,r)\ge g>\pi_1(d,r)$. 
Then $\h{H}_{d,g,r}\neq\emptyset$ if and only if there exist integers $a>0$ and $b$ satisfying 
(\ref{scrolldegree}) and (\ref{scrollgenus}). Moreover, $\h{H}_{d,g,r}$ has exactly one irreducible component for each such pair $(a,b)$. 
\end{prop}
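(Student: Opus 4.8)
The plan is to combine the classical Castelnuovo theory of the second genus bound with the classification of surfaces of minimal degree recalled in Remark \ref{minimal}. The crucial input is the classical fact that, because $g>\pi_1(d,r)$, every smooth irreducible non-degenerate curve $C$ of degree $d$ and genus $g$ in $\PP^r$ is forced to lie on a surface $S\subset\PP^r$ of minimal degree $r-1$, and moreover that such an $S$ is \emph{unique}; this is precisely the content of the second Castelnuovo bound and is where the hypothesis $g>\pi_1(d,r)$ enters. The restriction $r\neq 5$ serves to rule out the Veronese surface of Remark \ref{minimal}(ii), which would contribute an anomalous family of (even-degree) curves not governed by the scroll numerics, while $r\neq 3$ avoids the quadric surface in $\PP^3$, whose two rulings would destroy the clean correspondence $C\mapsto(a,b)$ used below.

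First I would establish the forward implication. Given $C\in\h{H}_{d,g,r}$, the surface $S$ it lies on is, for $r\neq 3,5$, a rational normal scroll: either a smooth scroll $S(a_1,a_2)$ with $a_1+a_2=r-1$ or the cone of Remark \ref{minimal}(iii), both of which are images of a Hirzebruch surface $\FF_e$. Writing the class of (the strict transform of) $C$ as $aH+bL$, non-degeneracy forces $a=C\cdot L>0$, and the degree and adjunction computations of Remark \ref{minimal}(i) give exactly equations (\ref{scrolldegree}) and (\ref{scrollgenus}); note that the resulting genus depends only on $a$ and $d$, not on the scroll type $e$. Hence the existence of $C$ yields an admissible pair $(a,b)$. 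Conversely, given such a pair with $a>0$, I would produce a curve by taking a general member of $|aH+bL|$ on a suitably chosen smooth scroll of minimal degree: using the dimension formula (\ref{lsdimension}) together with the very ampleness of $H$ and a Bertini argument, the general member is smooth, irreducible and non-degenerate of the prescribed degree and genus, so $\h{H}_{d,g,r}\neq\emptyset$.

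For the component count I would form, for each fixed admissible $(a,b)$, the incidence variety $\Sigma=\{(C,S): S \text{ a scroll of minimal degree},\ C\in|aH+bL|\subset S\}$. The family of surfaces of minimal degree in $\PP^r$ (smooth scrolls of every type together with the cone, all of which are specializations of the balanced scroll) is irreducible, and over it the relevant linear systems $|aH+bL|$ assemble into a projective bundle; restricting to the open locus of smooth irreducible non-degenerate members keeps $\Sigma$ irreducible. Because $g>\pi_1(d,r)$ guarantees that the minimal-degree surface through $C$ is unique, the projection $\Sigma\to\h{H}_{d,g,r}$, $(C,S)\mapsto C$, is generically injective, so its image is an irreducible locus, and as $(a,b)$ ranges over admissible pairs these images exhaust $\h{H}_{d,g,r}$. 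Finally, since $a=C\cdot L$ is a discrete invariant constant in families, curves with distinct values of $a$ cannot deform into one another, so distinct admissible pairs $(a,b)$ label distinct irreducible components.

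The main obstacle I anticipate is the irreducibility step: one must verify that the smooth scrolls of different types and the cone genuinely form a single irreducible family, and that the relative linear systems glue across the cone degeneration without producing extra components — for instance from curves meeting the vertex with varying multiplicity $m=d-nk$ as in (\ref{conevertex}). Confirming that the cone contributes only to the closure of the family arising from smooth scrolls, rather than a separate component, and checking that the general member of $|aH+bL|$ avoids the singular behavior that would drop the geometric genus below the arithmetic one, is where the real care is required.
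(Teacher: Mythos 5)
The paper never proves this statement: it is quoted verbatim from Harris (\cite[Corollary 3.16, page 100]{H1}), so there is no internal proof to compare against, and your sketch is in effect a reconstruction of Harris's own argument. As such it is essentially sound: the forced incidence $C\subset S$ with $\deg S=r-1$ from $g>\pi_1(d,r)$, the translation of $(d,g)$ into the pair $(a,b)$ via (\ref{scrolldegree})--(\ref{scrollgenus}), the Bertini construction for the converse, and the use of $a=C\cdot L$ as a discrete deformation invariant to separate components is exactly the standard route, and your reading of the exclusions is correct ($r=5$ removes the Veronese surface; $r=3$ removes the smooth quadric, on which swapping the two rulings makes two distinct pairs $(a,b)$ label the same family, breaking the ``exactly one component per pair'' bijection). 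Two points deserve more care than you give them. First, uniqueness of the minimal-degree surface through $C$ is \emph{not} ``the content of the second Castelnuovo bound''; the bound only produces some such surface, and uniqueness is a separate Bezout-type argument (two distinct irreducible non-degenerate surfaces of degree $r-1$ meet in a curve of degree at most $(r-1)^2<d$ in the relevant range). This uniqueness is what you actually use both for generic injectivity of $\Sigma\to\h{H}_{d,g,r}$ and, implicitly, for the claim that $a$ is locally constant in families: one must check that the flat limit of the surfaces $S_t$ is again the (unique, irreducible) minimal-degree surface through the limit curve, which follows since an irreducible non-degenerate curve cannot lie on a component of degree $<r-1$. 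Second, the relative linear systems do not literally form a projective bundle over the whole family of minimal-degree surfaces, because the class $aH+bL$ must be reinterpreted on the cone via its desingularization $\FF_{r-1}$ and one must verify that $h^0$ does not jump across the degeneration; this is precisely the Grauert-type computation the paper itself carries out in Appendix A (Proposition \ref{specialization}), which shows curves on the cone are specializations of curves on smooth scrolls and hence contribute no extra components -- so the obstacle you flag at the end is real but is resolved by an argument already present in the paper.
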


\vni
The following fact should be known to people as a folklore, which the author does not know of any source of a proof in the literature. We provide a proof in Appendix A.

\begin{prop}\label{specialization} Smooth curves in $\PP^{n+1}$  lying on a cone over a rational normal curve in  a hyperplane $H\cong\PP^{n}$ is  a specialization of curves lying on a rational normal surface scroll.
\end{prop}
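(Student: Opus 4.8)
The plan is to realize the cone as a flat limit of smooth scrolls inside $\PP^{n+1}$ and then to drag the curve along this degeneration, the whole point being that the relevant cohomology stays constant in the family. Following the notation of Remark \ref{minimal}(iii), write the cone as the image $S_0$ of $\PP(\Ee_0)$, where $\Ee_0=\Oo_{\PP^1}\oplus\Oo_{\PP^1}(n)$ and $\pi_*\Oo_{\PP(\Ee_0)}(1)=\Ee_0$, so that the complete system $|\Oo_{\PP(\Ee_0)}(1)|$ has dimension $n+1$, is globally generated, and contracts the unique section $C_0$ of self-intersection $-n$ to the vertex. First I would construct a rank-two bundle $\Ee$ on $\PP^1\times\AA^1_\tau$ as a family of extensions
\[
0\rightarrow \Oo_{\PP^1}\boxtimes\Oo \rightarrow \Ee \rightarrow \Oo_{\PP^1}(n)\boxtimes\Oo \rightarrow 0
\]
whose class is $\tau\cdot\eta$ for a general $\eta\in\Ext^1(\Oo_{\PP^1}(n),\Oo_{\PP^1})=H^1(\PP^1,\Oo(-n))$. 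Then $\Ee|_{\tau=0}=\Ee_0$ is split (the cone's bundle), while for $\tau\neq0$ the extension is non-split and a short computation with $h^0(\Ee_\tau(-2))$ shows $\Ee_\tau\cong\Oo(c)\oplus\Oo(d)$ with $c\ge1$, i.e.\ a smooth scroll. Since $h^0(\Ee_\tau)=n+2$ and $h^1(\Ee_\tau)=0$ on every fibre, the relative $\Oo_{\PP(\Ee)}(1)$ gives a morphism $\Phi\colon\PP(\Ee)\rightarrow\PP^{n+1}\times\AA^1$ over $\AA^1$, realizing a flat family $\Ss=\Phi(\PP(\Ee))$ of minimal-degree surfaces with general fibre a smooth scroll and special fibre $S_0$.

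Next I would transport the curve. For the given smooth $C\subset S_0$ let $\w C\subset\PP(\Ee_0)$ be its strict transform; by Remark \ref{minimal}(iii) one has $\w C\equiv k\xi+mf$, where $\xi$ is the relative hyperplane, $f$ the fibre, and $m=\w C\cdot C_0=d-nk\ge0$. Because $C$ is smooth, $\w C\rightarrow C$ is an isomorphism and $m\in\{0,1\}$. The structural point is that the relative Picard group of $\PP(\Ee)/\AA^1$ is $\ZZ\xi\oplus\ZZ f$, constant along the family, so $\gamma:=k\xi+mf$ is a well-defined class over all of $\AA^1$; let $\Ll_\gamma=\Oo_{\PP(\Ee)}(k)\otimes q^{*}\Oo_{\PP^1}(m)$ be the associated line bundle, with $q\colon\PP(\Ee)\rightarrow\PP^1$. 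If the section of $\Ll_\gamma|_{\tau=0}$ cutting out $\w C$ lifts to a section over $\AA^1$, then restricting that lift to a general fibre and applying $\Phi$ yields curves $C_\tau$ on the smooth scroll which degenerate to $\Phi(\w C)=C$, the desired specialization.

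The heart of the matter, and the step I expect to be the main obstacle, is to rule out that the cone carries ``extra'' divisors in the class $\gamma$ that fail to deform to the scrolls; equivalently, to prove that the restriction $H^0(\PP(\Ee),\Ll_\gamma)\rightarrow H^0(\PP(\Ee_0),\Ll_\gamma|_0)$ is surjective. I would settle this through constancy of fibrewise cohomology. Pushing forward along the ruling $q$ gives $q_*\Ll_\gamma|_\tau=\op{Sym}^k\Ee_\tau\otimes\Oo(m)=\bigoplus_{i=0}^{k}\Oo(ic+(k-i)d+m)$, and since $c,d\ge0$ and $m\ge0$ every summand has non-negative degree for every splitting type with $c+d=n$. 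Hence $H^1$ vanishes on each fibre and
\[
h^0\bigl(\PP(\Ee_\tau),\Ll_\gamma|_\tau\bigr)=h^0\Bigl(\PP^1,\textstyle\bigoplus_{i=0}^{k}\Oo(ic+(k-i)d+m)\Bigr)=(k+1)(m+1)+\tfrac{1}{2}nk(k+1)
\]
is independent of $\tau$. By cohomology and base change $R^1\pi_*\Ll_\gamma=0$ and $\pi_*\Ll_\gamma$ is locally free with formation commuting with base change ($\pi\colon\PP(\Ee)\rightarrow\AA^1$), so the restriction is surjective and the required lift exists. Finally, since $C$ is smooth one has $\w C\cong C$ and constant arithmetic genus along the flat family $\Cc\subset\PP(\Ee)$ of divisors cut out by the lift; thus $\Phi(\Cc)\subset\PP^{n+1}\times\AA^1$ is flat over $\AA^1$ with special fibre exactly $C$, and for a general choice of lift the nearby fibres $C_\tau$ are smooth of genus $g=p_a(\w C)$ by Bertini on the scroll, completing the specialization.
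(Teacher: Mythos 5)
Your proposal is correct and follows essentially the same route as the paper's proof: the same one-parameter family of extensions with class $\tau\cdot\eta$ degenerating a smooth scroll bundle to the split (cone) bundle, the same morphism to $\PP^{n+1}\times\AA^1$ via the tautological system, the same line bundle $\Oo_{\PP(\Ee)}(k)\otimes q^{*}\Oo_{\PP^1}(m)$ extending the class of the strict transform, and the same key step of constant fibrewise cohomology plus Grauert/base change to carry the curve along the degeneration. The only (harmless) differences are that you verify constancy of $h^0$ by pushing forward along the ruling to $\op{Sym}^k\Ee_\tau\otimes\Oo(m)$, where the paper uses Riemann--Roch and vanishing on $\FF_e$, and you phrase the conclusion as lifting a section through the given curve rather than forming the projective bundle $\PP(g_*\h{M})$.
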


\begin{rmk}  \label{cones} By Proposition \ref{specialization}, a family of curves in $\PP^r$ lying on  cones over  rational normal curves in
$H\cong\PP^{r-1}$ is in the boundary of the family of curves residing inside smooth surfaces of minimal degree. Consequently, we may assume that a surface $S$ of minimal degree containing a smooth reduced curve in $\PP^r$ is smooth. 
\end{rmk}
\vni
The following lemma -- regarding multiples of the unique pencil $g^1_k$ on a general $k$-gonal curve -- will be used  to show the existence of a complete very ample linear series with a given index of speciality $\alpha$. 
\begin{lem}\cite[Proposition 1.1]{CKM}\label{kveryample} Assume $2k-g-2<0$. Let $C$ be a general $k$-gonal curve of genus $g$, $k\ge 2$, $0\le m$, $n\in\mathbb{Z}$ such that 
\begin{equation}\label{veryamplek}
g\ge 2m+n(k-1)
\end{equation}
 and let $D\in C_m$. Assume that there is no $E\in g^1_k$ with $E\le D.$ Then $\dim|ng^1_k+D|=n$.
\end{lem}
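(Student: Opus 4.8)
The plan is to let $\phi\colon C\to\PP^1$ be the degree-$k$ morphism attached to the gonality pencil $g^1_k$, which is unique because $2k-g-2<0$, so that $\Oo_C(g^1_k)=\phi^*\Oo_{\PP^1}(1)$ and a general fibre $F=\phi^{-1}(q)$ is a reduced divisor of $k$ points disjoint from the support of $D$. The inequality $\dim|ng^1_k+D|\ge n$ is then immediate, since the pull-backs $\phi^*H^0(\PP^1,\Oo(n))\subseteq H^0(C,ng^1_k)\subseteq H^0(C,ng^1_k+D)$ already furnish $n+1$ independent sections; hence the whole content is the reverse bound $h^0(ng^1_k+D)\le n+1$, which I would establish by induction on $n$.

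For the inductive step, first observe that the triple $(m,n-1,D)$ again satisfies the hypotheses, because $g\ge 2m+n(k-1)\ge 2m+(n-1)(k-1)$ and the condition on $D$ is untouched; thus $h^0((n-1)g^1_k+D)=n$ by induction. Restricting $ng^1_k+D$ to a general fibre $F\sim g^1_k$ disjoint from $D$ yields
\begin{equation*}
0\to H^0\big((n-1)g^1_k+D\big)\to H^0\big(ng^1_k+D\big)\xrightarrow{\ \rho_F\ }H^0\big(\Oo_F(ng^1_k+D)\big)\cong\CC^{k},
\end{equation*}
so that $h^0(ng^1_k+D)=n+\dim\Image\rho_F$ and everything reduces to proving $\dim\Image\rho_F=1$. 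The $n+1$ pull-back sections restrict on $F$ to the one-dimensional space of constants, since $\phi^*|\Oo(n)|$ collapses each fibre to a single point; hence $\dim\Image\rho_F\ge1$, and the real point is that no section of $ng^1_k+D$ can be non-constant along a general fibre, equivalently that once one point $p_1\in F$ is removed the remaining $k-1$ points lie in the base locus of $|ng^1_k+D-p_1|$.

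The base case $n=0$ asks only that $D$ be rigid: if $h^0(D)\ge2$ with $\deg D=m\le g/2$, the moving part of $|D|$ is a base-point-free pencil of degree $\le m$, and by the Brill--Noether theory of a general $k$-gonal curve every base-point-free pencil of degree $\le g/2$ is either the gonality pencil itself or is compounded with $\phi$; in either case a general member, and hence $D$, contains a fibre of $\phi$, contrary to the hypothesis, so $h^0(D)=1$. The genuinely hard step, and the one where the generality of $C$ is indispensable, is the rank computation $\dim\Image\rho_F=1$. I would carry it out through the push-forward: writing $\phi_*\Oo_C(D)=\bigoplus_{i=1}^{k}\Oo_{\PP^1}(b_i)$, the projection formula gives $h^0(ng^1_k+D)=\sum_{i=1}^{k}\max(0,n+b_i+1)$, so the assertion is exactly that the splitting type has a single summand of degree $\ge -n$, namely the degree-$0$ summand coming from $\Oo_C\hookrightarrow\Oo_C(D)$, with all others of degree $\le -n-1$. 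The hypothesis that $D$ contain no fibre of $\phi$ forbids a second non-negative summand, while the numerical bound $g\ge 2m+n(k-1)$, together with the maximal balancedness of the scrollar invariants $e_1,\dots,e_{k-1}$ (with $\sum e_i=g+k-1$) of a \emph{general} $k$-gonal curve, forces the remaining summands below $-n-1$. Pinning down this splitting type for the general curve -- for instance by specializing to a conveniently balanced $k$-gonal curve and invoking upper semicontinuity of $h^0$ -- is the main obstacle; granting it, both the induction and the base case follow.
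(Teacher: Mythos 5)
The first thing to note is that the paper contains no proof of this statement at all: the lemma is imported verbatim from \cite[Proposition 1.1]{CKM}, so your argument cannot be measured against an internal proof and has to stand on its own. It does not. Your framework is sound as far as it goes: the uniqueness of the $g^1_k$ under $2k-g-2<0$, the trivial bound $\dim|ng^1_k+D|\ge n$ via pull-back sections, the inductive set-up, and the projection-formula identity $h^0(ng^1_k+D)=\sum_i\max(0,n+b_i+1)$ for $\phi_*\Oo_C(D)\cong\bigoplus_i\Oo_{\PP^1}(b_i)$ are all correct. But this is a repackaging, not a proof: the assertion $\dim\Image\rho_F=1$, equivalently that the splitting type is $b_1=0$ and $b_i\le -n-1$ for $i\ge 2$, \emph{is} the lemma (a degree count gives only $\sum b_i=m+1-g-k$, which is compatible with, but does not force, this splitting), and you explicitly leave it unproved, calling it ``the main obstacle.'' The entire content of the statement — the only place where the generality of $C$ and the hypothesis on $D$ enter — is concentrated in exactly the step you defer.

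Moreover, the route you sketch for closing that gap would not work as stated. First, the scrollar invariants of a general $k$-gonal curve govern $\phi_*\Oo_C$, not $\phi_*\Oo_C(D)$: the latter is an elementary modification of the former along $D$, and its splitting type genuinely depends on the divisor $D$ and not only on the curve, which is precisely what the hypothesis ``no $E\in g^1_k$ with $E\le D$'' must control; balancedness of the invariants of $C$ by itself says nothing about this. Second, the specialization-plus-semicontinuity argument fails structurally: the lemma quantifies over \emph{all} $D\in C_m$ satisfying the \emph{open} condition of containing no fibre, so the locus of bad pairs $(C,D)$ is only locally closed in the relative symmetric product over the $k$-gonal locus; its closure can meet a special fibre only in divisors that \emph{do} contain fibres, so verifying the statement on one balanced curve (or for general $D$) does not propagate to every admissible $D$ on the general curve. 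Finally, your base case leans on the claim that every base-point-free pencil of degree $\le g/2$ on a general $k$-gonal curve is composed with $\phi$; for $k=2,3$ this follows from the Castelnuovo--Severi inequality, but for $k\ge4$ that inequality, $g\le(m'-1)(k-1)$, gives no contradiction when $m'\le g/2$, so the claim is itself a nontrivial theorem about general $k$-gonal curves — of essentially the same depth as the lemma — invoked without proof or reference. In short, both the base case and the inductive step rest on unestablished statements, and the argument as it stands is a reformulation of the problem rather than a solution; what \cite{CKM} supplies at this point is a genuine degeneration-type argument, which is the missing ingredient here.
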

\vni The following inequality -- known as Castelnuovo-Severi inequality -- shall be used occasionally; cf. \cite[Theorem 3.5]{Accola1}.
\begin{rmk}[Castelnuovo-Severi inequality]\label{CS} Let $C$ be a curve of genus $g$ which admits coverings onto curves $E_h$ and $E_q$ with respective genera $h$ and $q$ of degrees $m$ and $n$ such that these two coverings admit no common non-trivial factorization; if $m$ and $n$ are primes this will always be the case. Then
$$g\le mh+nq+(m-1)(n-1).$$ 
\end{rmk}

\medskip
\vni
We fix the following standard notation and convention which we shall use for the existence of linearly normal smooth curves with prescribed degree and genus $(d,g)$ inside a projective space of relatively low dimension. 
\begin{rmk}\label{linearlynormalcriteria}
(a) Let $\PP^2_s$ be the rational surface $\PP^2$ blown up at $s$ general points. Let $e_i$ be the class of the exceptional divisor
$E_i$ and $l$ be the class of a line $L$ in $\PP^2$. For integers  $b_1\ge b_2\ge\cdots\ge b_s$, let $(a;b_1,\cdots, b_i, \cdots,b_s)$ denote the class of the linear system $|aL-\sum b_i E_i|$ on $\PP^2_s$.  By abuse of notation we use the same  expression $(a;b_1,\cdots, b_i, \cdots,b_s)$ for the linear system $|aL-\sum b_i E_i|$ itself or the line bundle $\Oo_{\PP^2_s}(aL-\sum b_i E_i)$. We use the notation  $$(a;b_1^{s_1},\cdots,b_j^{s_j},\cdots,b_t^{s_t}), ~ \sum s_j=s$$ when  $b_j$ appears $s_j$ times consecutively  in the linear system $|aL-\sum b_i E_i|$. 
\vni
(b) We also use the following convention; if the exponent $s_j$  of $b_j$ is zero in the above expression,  then such an entry does not 
  appear in the actual linear system, i.e.  $$(a;b_1^{s_1},\cdots,b_j^{s_j=0},\cdots,b_t^{s_t})=(a;b_1^{s_1},\cdots,\widehat{b_j^{s_j}},\cdots,b_t^{s_t}).$$

\vni

\vni
(c) Assume 
$3\le s\le 6$. Let $C$ be a divisor on $\PP^2_s$ and let $(a;b_1,\cdots ,b_s)$ be the class corresponding to $|\h{O}_S(C)|$. Suppose that $(a;b_1,\cdots ,b_s)$ satisfies the conditions 

(1) $a\ge b_1+b_2+b_3$

(2) $a\ge \textrm{max}(0, b_1)$

(3) $b_1\ge b_2\ge\cdots\ge b_s$.

\vni
Then the followings hold; cf. \cite[Remark 3.1.2, Proposition 3.1.3]{kleppe},\cite[sec. 4 proof of 4.1 and Ex 4.8]{Hartshorne} and \cite[Section 2]{Gruson}.
\vni
\begin{enumerate}
\item[(i)]
 If $C$ is reduced then $b_s\ge -1$. 
 
 \item[(ii)]
 Assume $(a;b_1,\cdots ,b_s)\neq (a;a,0,\cdots , 0)$, $a\ge 2$.  A general member of the complete linear system $|\h{O}_S(C)|$  is smooth and connected if and only if $b_s\ge 0$. 
 
 \item[(iii)]
 Identifying $\PP^2_s$ with the image of the embedding  
  $\PP^2_s\lhook\joinrel\xrightarrow{|-K_{\PP^2_s}|}
 \PP^{9-s}$,
a general member in $|\h{O}_S(C)|$ is linearly normal in $\PP^{9-s}$ if and only if $b_s\ge 1$, provided $$(a;b_1,\cdots ,b_s)\neq
(\lambda +3t,\lambda +t, t, \cdots, t) \textrm{ for some } \lambda \ge 2$$ in which case $h^1(\PP^{9-s},\h{I}_C(1))=0$  if and only if $t\ge 2$.

\item[(iv)] In  Appendix B, we will give a slightly more general criteria covering (iii) for the linear normality of  curves lying on a blown up surface of $\PP^2$,  suitably 
embedded into $\PP^r$  by a certain very ample line bundle.
\end{enumerate}
 
 \vni
 (d)
For the very ampleness of a linear system $|aL-\sum b_i E_i|$,  we frequently use the main result in \cite{sandra}, sometimes without explicit mention. 
\end{rmk}

\noindent
We also make a note of the following lemma
which will be used for the 
non-existence (emptyness) of peculiar Hilbert schemes $\HL{g+r-5,g,r}$ for $g=r+10,~ r\ge 9$ and $g=r+11,~ r\ge12$; Theorem \ref{sub6x} (c),(d).
\begin{lem}\label{easylemma1} Let $\h{E}=g^{\al -1}_e$ ($\al\ge 4$) be a special complete linear series on a smooth curve $C$ of genus $g\ge 5$, with (possibly empty) base locus $\Delta$. Let $\h{E}':=\h{E}-\Delta=g^{\al -1}_{e'}$ be the moving part of $\h{E}$. 
We assume that  either 
\begin{itemize}
\item [(i)]
$\h{E}'$ induces a double covering $C\stackrel{\eta}{\rightarrow} E$ onto a possibly singular curve $E\subset\PP^{\al -1}$ of (geometric) genus $h\ge 1$. Let  $\w{E}\stackrel{\epsilon}{\rightarrow} E\subset\PP^{\al -1}$ be the normalization of the curve $E$. Let 
$C\stackrel{\w{\eta}}{\rightarrow} \w{E}$ be the morphism associated with $\eta$, i.e.  $\eta=\epsilon\circ\w{\eta}$; set
$$\epsilon^*(\h{O}_{\PP^{\al -1}}(1))=g^{\al -1}_{e'/2}, ~~|\eta^*(\h{O}_{\PP^{\al -1}}(1))|=|\w{\eta}^*\epsilon^*(\h{O}_{\PP^{\al -1}}(1))|=\h{E'}.$$ Suppose that the linear series $g^{\al -1}_{e'/2}$ on $\w{E}$ which is pulled back to $\h{E}'$ via $\w{\eta}$, i.e.  $\w{\eta}^*(g^{\al -1}_{e'/2})=\h{E'}$ -- which is necessarily complete by the completeness of $\h{E}'$ -- is {\bf non special} or


\item [(ii)] $\Delta\neq\emptyset$ and $\h{E}$ induces a triple covering onto a rational curve.
\end{itemize}
Then $\h{E}^\vee$ is not very ample. 
\end{lem}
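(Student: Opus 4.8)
The plan is to translate the very ampleness of the residual series $\h{E}^\vee=|K_C-\h{E}|$ into a statement about the cohomology of $\h{E}$ twisted by a length-two divisor, and then to manufacture a single bad divisor out of a fibre of the compounding covering. First I would record the separation criterion. For an effective divisor $D$ with $\deg D=2$, Serre duality gives $h^0(K_C-\h{E}-D)=h^1(\h{E}+D)$ and $h^0(K_C-\h{E})=h^1(\h{E})$, so Riemann--Roch yields
$$h^0(\h{E}^\vee)-h^0(\h{E}^\vee-D)=2+h^0(\h{E})-h^0(\h{E}+D).$$
Consequently $\h{E}^\vee$ fails to be very ample as soon as one exhibits a \emph{single} effective $D$ of degree $2$ with $h^0(\h{E}+D)\ge h^0(\h{E})+1$. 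Since $\Delta$ is the base locus, $h^0(\h{E})=h^0(\h{E}')$, and for effective $D$ one always has $h^0(\h{E}+D)\ge h^0(\h{E}'+D)$; thus the whole problem reduces to producing $D$ for which $\h{E}'$ (or $\h{E}$) gains a section.

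The engine is a fibre-addition estimate. Let $\psi\colon C\to Y$ be the finite morphism onto the smooth model ($Y=\w{E}$ in (i), $Y=\PP^1$ in (ii)), so that $\h{E}'=\psi^*N$ for a complete base-point-free $N$ on $Y$ (namely $N=\epsilon^*\Oo_{\PP^{\al-1}}(1)=g^{\al-1}_{e'/2}$ in (i)). Writing $\psi_*\Oo_C=\Oo_Y\oplus\Ff$ with $\Ff$ the Tschirnhausen bundle and invoking the projection formula, for a point $R\in Y$ and its fibre $F=\psi^*R$ one obtains
$$h^0(\h{E}'+F)-h^0(\h{E}')=\big(h^0(N+R)-h^0(N)\big)+\big(h^0((N+R)\tensor\Ff)-h^0(N\tensor\Ff)\big)\ge h^0(N+R)-h^0(N).$$
The first summand equals $1$: in case (i) because $N$ is assumed non-special, and in case (ii) because $Y=\PP^1$ and $N$ is effective. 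Hence adding a full fibre always raises $h^0$ by at least one.

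With this in hand, case (i) is immediate: there $F=P+P'$ already has degree $2$, so I would take $D=F=\psi^*R$ for general $R$ (a reduced fibre, $P\ne P'$, chosen away from nothing in particular) and conclude $h^0(\h{E}+D)\ge h^0(\h{E}'+F)\ge h^0(\h{E}')+1=h^0(\h{E})+1$. For case (ii) the hypothesis $\Delta\neq\emptyset$ enters decisively, because the fibre $F=\psi^{-1}(\psi(B))$ now has degree $3$ and cannot be used directly. The idea is to borrow a base point: fix $B\in\Delta$ and set $D:=F-B$, which is effective of degree $2$. Since $\Delta\ge B$, the divisor $\Delta+(F-B)\ge F$, whence $h^0(\h{E}+D)=h^0(\h{E}'+\Delta+(F-B))\ge h^0(\h{E}'+F)\ge h^0(\h{E}')+1=h^0(\h{E})+1$. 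Note this argument is insensitive to whether $\psi$ ramifies at $B$, since it uses only $B\le F$ and $\deg(F-B)=2$.

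The main obstacle is precisely this bookkeeping with the base locus in case (ii): one must make sure the length-two test divisor $D$ actually completes to a \emph{full} fibre \emph{inside} $\h{E}+D$, which is exactly what the relation $\Delta\ge B$ together with the choice $D=F-B$ secures, and this is why $\Delta\neq\emptyset$ is indispensable in (ii) while being unnecessary in (i). Everything else is the routine dictionary between very ampleness of $\h{E}^\vee$ and the cohomology of $\h{E}$, plus the elementary splitting of $\psi_*\Oo_C$; the standing hypotheses $g\ge 5$, $\al\ge 4$ and $h\ge 1$ are used only to guarantee that $\h{E}$ is genuinely special (so that $\h{E}^\vee$ is an honest series) and that we are in the claimed compounded regime.
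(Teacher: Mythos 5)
Your proof is correct and takes essentially the same approach as the paper's: the same reduction of very ampleness of $\h{E}^\vee$ to the non-existence of an effective degree-two divisor $D$ with $h^0(\h{E}+D)>h^0(\h{E})$, and the same test divisors — a full fibre $\psi^*R$ of the double cover in case (i), and a fibre minus the base point $B\in\Delta$ in case (ii). The only cosmetic difference is that you certify the dimension jump via the splitting $\psi_*\Oo_C=\Oo_Y\oplus\Ff$ and the projection formula, whereas the paper simply pulls back the augmented series $|N+R|$ from the base curve and uses that pulling back a linear series does not decrease its dimension.
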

\begin{proof}
(i)  For any $u\in \w{E}$, 
\begin{eqnarray*}
 \dim|\h{E}+\w{\eta}^*(u)|&=&\dim|\h{E}'+\Delta +\w{\eta}^*(u)|=\dim|\w{\eta}^*(g^{\al -1}_{e'/2}+u)+\Delta|\\&=&\dim|\w{\eta}^*(g^\al_{e'/2+1})+\Delta|\ge \dim\h{E}+1
\end{eqnarray*} and hence $\h{E}^\vee$ is not very ample.
\vni
(ii)) Let $g^1_3$ be the (unique) trigonal pencil on $C$. Choose $q\in\Delta$ and let  $q+t+s\in g^1_3$ be the unique trigonal divisor containing $q$. We then have 
\begin{eqnarray*}
\dim|\h{E}+t+s|&=&\dim|(\al -1)g^1_3+\Delta+t+s|\\&=&\dim| (\al -1)g^1_3+(q+t+s)+(\Delta-q)|\\&=&\dim|\al g^1_3+(\Delta -q)|\ge\dim\h{E}+1,
\end{eqnarray*}
hence $\h{E}^\vee$ is not very ample. 
\end{proof}
\vni 
On the contrary, the following lemma will be used for the existence of $\HL{g+r-5,g,r}$ with $g=r+12$ when  $r$ is big enough. The proof is lengthy which we provide in Appendix  C.

\begin{lem}\label{triple0}
 Let $C\stackrel{\eta}{\rightarrow} E$ be a triple covering of an elliptic curve $E$ of genus $g\ge 3\al+7$, $\al\ge3$. 
 For a complete $g^{\al -1}_\al\in W^{\al -1}_\al(E)$, 
 we put $\h{E}:=|\eta^*(g^{\al -1}_\al )|$.
 Then 
 $\dim\h{E}=\al -1$ and $\h{E}^\vee =|K_C-\h{E}|=g^{g-2\al -2}_{2g-2-3\al }$ is very ample.
\end{lem}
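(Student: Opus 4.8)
The plan is to analyze the triple covering $\eta : C \to E$ of an elliptic curve and verify two things: first that $\dim \h{E} = \al - 1$ (equivalently that $\eta^*$ does not accidentally increase the dimension of the series), and second that the residual series $\h{E}^\vee = |K_C - \h{E}|$ is very ample. For the dimension count, I would push-forward: since $\eta$ is a degree $3$ map, $\eta_* \Oo_C = \Oo_E \oplus \Nn^\vee$ for a rank-$2$ bundle on $E$ (the Tschirnhausen module), and by the projection formula $h^0(C, \eta^* L) = h^0(E, L) + h^0(E, L \otimes \Nn^\vee)$ for $L = g^{\al-1}_\al$. The point is to show the second term vanishes, so that $h^0(C, \h{E}) = h^0(E, g^{\al-1}_\al) = \al$ and hence $\dim \h{E} = \al - 1$. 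This should follow from a degree/negativity estimate on $\Nn^\vee$ on the elliptic curve $E$, using $\deg L = \al$ and the genus hypothesis $g \ge 3\al + 7$ (which via Riemann--Hurwitz controls $\deg \Nn$).

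Next I would compute the numerical invariants of $\h{E}^\vee$. Since $\h{E} = g^{\al-1}_\al$, we have $\deg \h{E}^\vee = (2g-2) - \al$ and, by Riemann--Roch, $\dim \h{E}^\vee = (2g-2-\al) - g + \dim\h{E} = g - \al - 1 + (\al - 1) - \dots$; one reads off directly from the statement that the target is $g^{g-2\al-2}_{2g-2-3\al}$, so I would confirm $\deg = 2g-2-3\al$ and $h^1 = h^0(\h{E}) = \al$ forces $\dim\h{E}^\vee = g-2\al-2$. (Here the subtlety is whether $\h{E}^\vee$ is complete and base-point free, which I would get from the very-ampleness argument below.)

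The main work, and the hardest part, is proving very ampleness of $\h{E}^\vee = |K_C - \h{E}|$. The natural criterion is: $\h{E}^\vee$ is very ample if and only if for every effective divisor $p+q$ of degree $2$ (including $p=q$) one has $\dim |K_C - \h{E} - p - q| = \dim \h{E}^\vee - 2$, equivalently, by Riemann--Roch applied to the residual, $\dim |\h{E} + p + q| = \dim \h{E}$, i.e. adding two general points to $\h{E}$ does not raise $h^0$. So I must show $h^0(C, \h{E} + p + q) = h^0(C, \h{E}) = \al$ for all $p, q \in C$. The danger is exactly the failure-mode isolated in Lemma \ref{easylemma1}: if $p$ and $q$ lie in a common fibre of $\eta$, or if $\h{E} + p + q$ happens to be pulled back from a larger series on $E$, then $h^0$ could jump. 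I would handle this by again decomposing via $\eta_*$: write $h^0(C, \h{E}+p+q)$ in terms of sections on $E$ twisted by $\Nn^\vee$, and argue that for the specified degree $\al$ line bundle on the \emph{elliptic} curve the twisted contributions still vanish even after adding two points, because the relevant degrees on $E$ remain negative. The genus bound $g \ge 3\al + 7$ is presumably precisely what guarantees $\deg \Nn$ is large enough that $L(p'+q') \otimes \Nn^\vee$ still has no sections for the pushed-down points; this is where I expect the real inequalities to live.

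In summary, the skeleton is: (1) establish $\eta_*\Oo_C = \Oo_E \oplus \Nn^\vee$ with $\deg \Nn$ computed from Riemann--Hurwitz; (2) use the projection formula and negativity of $\Nn^\vee \otimes L$ to get $\dim \h{E} = \al - 1$; (3) read off the invariants of $\h{E}^\vee$ by Riemann--Roch; (4) prove very ampleness by the standard two-point separation criterion, reduced via $\eta_*$ to a vanishing statement on the elliptic base $E$, where the hypothesis $g \ge 3\al + 7$ supplies the needed degree estimates and rules out the base-locus and fibre-coincidence obstructions flagged in Lemma \ref{easylemma1}. The main obstacle is step (4): ensuring the two-point separation holds \emph{uniformly}, including the diagonal case $p = q$ (separation of tangent vectors) and the case where $p, q$ sit in one $\eta$-fibre, which is exactly the configuration where triple-cover residual series tend to fail very ampleness.
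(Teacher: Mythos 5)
Your overall strategy --- pushing forward along $\eta$ and working with the Tschirnhausen bundle on $E$ --- is genuinely different from the paper's proof, which never decomposes $\eta_*\Oo_C$ and instead runs a projective-geometry argument (factorization of $\eta$ through the map given by the enlarged series, Castelnuovo's genus bound, analysis of nearly extremal curves on minimal-degree surfaces, and the Castelnuovo--Severi inequality). Your outline has two gaps, one fixable and one fatal. The fixable one is in your step (2): writing $\eta_*\Oo_C=\Oo_E\oplus\h{T}^\vee$, Riemann--Hurwitz only gives the total degree $\deg\h{T}^\vee=1-g$, and for a rank-two bundle on an elliptic curve this alone does not imply $h^0(E,L\otimes\h{T}^\vee)=0$: the total degree $1-g$ puts no obstruction to $\h{T}^\vee$ containing a sub-line bundle of degree $\ge 1-\al$, and any such sub-line bundle would force $h^0(E,L\otimes\h{T}^\vee)>0$. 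What one actually needs is a bound on the maximal sub-line bundle of $\h{T}^\vee$; this can be obtained (degree $\le(1-g)/3\le-\al-2$) from the irreducibility of $C$ via its realization inside the ruled surface $\PP(\h{T})$ as a curve meeting every fibre three times, or, as the paper does, by factoring the morphism given by $|\eta^*L|$ through $\eta$ and invoking $\pi(3\al,\al)=3\al+3<g$. So step (2) is repairable, but not by the degree count you invoke.

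The fatal gap is step (4), which is the actual content of the lemma. The needed statement is $h^0(C,\eta^*L(p+q))=\al$ for every $p+q\in C_2$, and this does not reduce to a negativity-of-degree statement on $E$. Concretely, pushing forward the exact sequence $0\to\eta^*L\to\eta^*L(p+q)\to\Oo_{p+q}(p+q)\to 0$ (pushforward is exact since $\eta$ is finite) shows that extra sections exist exactly when the connecting map
$$\CC^2\cong H^0\bigl(E,\eta_*\Oo_{p+q}(p+q)\bigr)\longrightarrow H^1(E,L)\oplus H^1(E,L\otimes\h{T}^\vee)$$
fails to be injective. Here $H^1(E,L)=0$, but $H^1(E,L\otimes\h{T}^\vee)\cong H^0(E,L^{-1}\otimes\h{T})^\vee$ has dimension $g-1-2\al\ge\al+6>0$, so nothing vanishes for degree reasons; and by Serre duality the injectivity of this connecting map for all $p+q$ is literally equivalent to the two-point separation you are trying to prove, so the proposed reduction is circular. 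Slope and sub-line-bundle estimates applied to the rank-three sheaf $\eta_*\Oo_C(p+q)$ (whose structure, as you yourself note, depends on how $p+q$ sits relative to the fibres of $\eta$) only yield $h^0\le\al+2$, never the required $h^0=\al$; moreover at the boundary value $g=3\al+7$ the relevant twists have degree exactly $0$, not negative. Some global-geometric input is unavoidable, and this is what the paper supplies: assuming very ampleness fails, it takes the moving part of $|\h{E}+\Delta|$, a $g^s_t$ with $s\ge\al$ and $t\in\{3\al+1,3\al+2\}$; since the degree of the induced morphism divides $\deg\eta=3$ while $3\nmid t$, that moving part is birationally very ample; then Castelnuovo's bound kills most cases outright, and the survivors are eliminated by classifying the image as a (nearly) extremal curve on a scroll or a cone, extracting a low-degree pencil $g^1_a$ with $a\le 7$, and contradicting the Castelnuovo--Severi inequality (Remark \ref{CS}). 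Neither this mechanism nor any substitute for it appears in your outline.
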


 
 \vni 
 We close this section with an immediate consequence of 
 Lemma \ref{triple0}. 
\begin{cor}\label{triple1} For $\al\ge 3$, $r\ge\al+5$ 
$$\HL{2r+\al +2,r+2\al +2,r}\neq\emptyset.$$
\end{cor}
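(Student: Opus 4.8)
The plan is to derive Corollary \ref{triple1} directly from Lemma \ref{triple0} by a change of variables, so the entire argument is a matching of numerical parameters. Suppose we are given $\al\ge 3$ and $r\ge \al+5$. I would set the genus to be $g:=r+2\al+2$ and the degree to be $d:=2r+\al+2$; the index of speciality is then $g-d+r=(r+2\al+2)-(2r+\al+2)+r=\al+0=\al$, so $i(\h{D})=\al$ as required, and $d=g+r-(2\al)$ sits in the appropriate range $d\ge g+1$ once $r$ is large. The goal is to produce, on a suitable smooth curve $C$ of genus $g$, a complete and very ample linear series $g^r_d$ whose residual series has dimension $\al-1$. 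By the remarks in \S 1, exhibiting such a series on a single curve $C$ suffices to conclude $\HL{d,g,r}\neq\emptyset$, since an open subset of $\h{H}_{d,g,r}$ is a $\PP\mathrm{GL}(r+1)$-bundle over an open subset of $\w{\h{G}}_\h{L}$.

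The key step is to invoke Lemma \ref{triple0} with the appropriate input. I would take $C\stackrel{\eta}{\rightarrow}E$ to be a triple covering of an elliptic curve $E$, where $C$ has genus $g=r+2\al+2$. To apply the lemma I must verify its hypothesis $g\ge 3\al+7$: indeed $g=r+2\al+2\ge(\al+5)+2\al+2=3\al+7$ precisely because $r\ge\al+5$, so the numerical restriction of Corollary \ref{triple1} is exactly what makes the lemma applicable. (Here $E$ is described in the lemma as having ``genus $g$'', but read in context $E$ is elliptic and $g$ refers to the genus of the covering curve $C$; I would phrase the application so that $C$ is the genus-$g$ curve and $E$ is elliptic.) Choosing a complete $g^{\al-1}_\al\in W^{\al-1}_\al(E)$ on the elliptic curve and setting $\h{E}:=|\eta^*(g^{\al-1}_\al)|$, the lemma yields $\dim\h{E}=\al-1$ and, crucially, that the residual series $\h{E}^\vee=|K_C-\h{E}|=g^{\,g-2\al-2}_{\,2g-2-3\al}$ is very ample.

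It remains only to check that $\h{E}^\vee$ is the sought-after $g^r_d$. Its dimension is $g-2\al-2=(r+2\al+2)-2\al-2=r$, and its degree is $2g-2-3\al=2(r+2\al+2)-2-3\al=2r+\al+2=d$, so $\h{E}^\vee=g^r_d$ is a complete linear series of exactly degree $d$ and dimension $r$ on $C$; being a residual (hence complete) series it is automatically linearly normal, and Lemma \ref{triple0} gives its very ampleness. Therefore $C\subset\PP^r$ embedded by $\h{E}^\vee$ is a smooth, irreducible, non-degenerate, linearly normal curve of degree $d=2r+\al+2$ and genus $g=r+2\al+2$, so $\HL{2r+\al+2,r+2\al+2,r}\neq\emptyset$.

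I do not expect any genuine obstacle here, since all the analytic difficulty (producing the triple covering, controlling base loci, and proving very ampleness of the residual series) is absorbed into Lemma \ref{triple0}, whose proof is deferred to Appendix C. The only thing requiring care is the bookkeeping: confirming that the single inequality $r\ge\al+5$ translates into the lemma's hypothesis $g\ge 3\al+7$, and verifying the three identities $i(\h{D})=\al$, $\dim\h{E}^\vee=r$, and $\deg\h{E}^\vee=d$. Thus the hardest part is purely conceptual, namely recognizing that the degree/dimension of the residual series $\h{E}^\vee$ in Lemma \ref{triple0} are exactly tuned to realize the triple $(2r+\al+2,\,r+2\al+2,\,r)$ once one substitutes $g=r+2\al+2$; everything else is immediate.
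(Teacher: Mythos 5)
Your proposal is correct and follows exactly the paper's own argument: apply Lemma \ref{triple0} to the pullback $\h{E}=|\eta^*(g^{\al-1}_\al)|$ on a triple cover of an elliptic curve, using $r\ge\al+5$ to guarantee the hypothesis $g=r+2\al+2\ge 3\al+7$, and identify $\h{E}^\vee$ as the desired very ample complete $g^r_{2r+\al+2}$. The only difference is that you spell out the degree/dimension bookkeeping that the paper leaves implicit, which is harmless.
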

\begin{proof} We take the linear series $\h{E}=g^{\al-1}_{3\al}=\eta^*(g^{\al-1}_\al)$ on a triple covering of an elliptic curve 
$C\stackrel{\eta}{\rightarrow} E$. Since $g=r+2\al +2\ge 3\al+7$, the complete linear series $\h{E}^\vee=g^r_{2r+\al+2}$ is very ample by Lemma \ref{triple0}. 
\end{proof}

\section{Existence \& non-existence of linearly normal curves with the index of speciality $\al =5$}

This section is devoted to determining the exact range of the triples $(d,g,r)$ for which 
$\HL{d,g,r}\neq\emptyset$ with the index of speciality $\al=g-d+r=5, r\ge3$.

\vni
Before proving the first main result of this section, we make a remark concerning the existence of linearly normal smooth curves in $\PP^r$ which directly comes from well-known results in Brill-Noether theory.
\vni
\begin{rmk}\label{general} (i)
In the Brill-Noether range $$\rho (d,g,r)=\rho (d,g,r)=g-(r+1)(g-d+r)\ge 0$$ with $r\ge 3$, the existence of a smooth linearly normal curve in $\PP^r$ of degree $d$ and genus $g$ follows from a theorem due to Eisenbud-Harris \cite[Theorem (1.8)]{H1}; a general element of $W^r_d(C)$ on a general curve $C$ of genus $g$ is very ample, which is also complete by the fact  that no component of $W^r_d(C)$ is entirely contained in $W^{r+1}_d(C)$ (for any $C$) \cite[Lemma (3.5), Chapt. IV]{ACGH}. Hence we see that 
$$\HL{d,g,r}\neq\emptyset \textrm{ if }\rho(d,g,r)\ge 0  \textrm{ and } r\ge 3.$$
\vni
(ii) 
Therefore we may focus on triples $(d,g,r)$ which are outside the Brill-Noether range.
 In our current situation $g-d+r=5$, we are particularly interested in the range
 $$\rho (d,g,r)=g-5(r+1)<0.$$

\end{rmk}
\vni
 The following theorem provides the full list of triples  $(d,g,r)$ with 
 $g-d+r=5$ and $r\ge 3$ for which $\HL{g+r-5,g,r}\neq\emptyset$; cf. Figure \ref{fig:existence}.



\begin{thm}\label{sub6x}
\begin{enumerate}
\item[(a)] 

$
\HL{g+r-5,g,r}=\h{H}_{g+r-5,g,r}=\emptyset\textrm{ for } g\le r+8, r\neq 4
$.


\vni
For $r=4$, 
$\HL{g+r-5,g,r}=\HL{g-1,g,4}=\emptyset$  if $g\le r+7=11$ and 

$\HL{g-1,g,4}\neq\emptyset$ for 
$g=r+8=12$.
\item[(b)] $\HL{g+r-5,g,r}\neq\emptyset$
for  $g=r+9$, $r\ge 3$.

\item[(c)] $\HL{g+r-5,g,r}=\emptyset$ for $g=r+10$ if and only if $r\ge 9$.
\item[(d)] $\HL{g+r-5,g,r}=\emptyset$ for $g=r+11$  if and only if $r\ge 12$. 
\item[(e)] $\HL{g+r-5,g, r}\neq\emptyset$ for $g\ge r+12$, $r\ge 3$. 
\end{enumerate}
\end{thm}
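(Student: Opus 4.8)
The plan is to study every component of $\HL{g+r-5,g,r}$ through the residual of its hyperplane series. If $\mathcal{D}=g^r_{g+r-5}$ is the complete, very ample hyperplane series of a general member, then $i(\mathcal{D})=5$ forces the residual $\mathcal{D}^\vee=g^4_{g-r+3}$ to have dimension $4$, so the residual curve $C^\vee$ lies in $\PP^4$; since residuation preserves the Brill--Noether number, $\rho(g+r-5,g,r)=\rho(g-r+3,g,4)=g-5(r+1)$. Producing a member of $\HL{g+r-5,g,r}$ is thus equivalent to producing a smooth curve $C$ of genus $g$ carrying a complete $g^4_{g-r+3}$ whose residual is very ample, and once $\dim\mathcal{D}=r$ and $\deg\mathcal{D}=g+r-5$ the equality $h^1(\mathcal{D})=5$ is automatic by Riemann--Roch. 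I would organise the argument around the comparison of $g$ with $\pi(g-r+3,4)$: roughly, the residual can be taken birationally very ample (and then extremal or nearly extremal) exactly when $g\le\pi(g-r+3,4)$, and this governs the thresholds in (c) and (d).

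For the non-existence in (a) I would invoke the first Castelnuovo bound: a direct computation gives $g>\pi(g+r-5,r)$ for all $g\le r+8$ when $r\ge 5$, and for $r=3,4$ for all such $g$ except the top value $g=r+8$, so that even $\h{H}_{g+r-5,g,r}=\emptyset$ there. The two endpoints are then done by hand: for $r=4$ the triple $g=r+8=12$ yields an extremal curve of degree $11$ in $\PP^4$, giving the asserted nonemptiness, while for $r=3$ the absence at $g=r+8=11$ (degree $9$ in $\PP^3$) follows from the classification of space curves, genus $11$ being a gap for degree $9$. The substantive non-existence is that of (c) for $r\ge 9$ and (d) for $r\ge 12$: there $g>\pi(g-r+3,4)$ (namely $r+10>18=\pi(13,4)$ and $r+11>22=\pi(14,4)$), so the residual $\mathcal{D}^\vee=g^4_{g-r+3}$ admits no birationally very ample model and must be compounded. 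I would classify the compounded possibilities using Clifford's theorem and Castelnuovo/Brill--Noether theory in $\PP^4$ --- a double cover with nonspecial pulled-back quotient series, or a trigonal series with nonempty base locus --- and then apply Lemma \ref{easylemma1} to conclude that $\mathcal{D}=(\mathcal{D}^\vee)^\vee$ is not very ample.

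For the existence statements I would combine several constructions. When $g\ge 5r+5$ the Brill--Noether number is non-negative and Remark \ref{general} applies, settling the tail of (e); when $g=r+12$ and $r\ge 10$, Corollary \ref{triple1} (Lemma \ref{triple0} with $\alpha=5$) realises the curve as a triple cover of an elliptic curve with very ample residual. Whenever $g\ge r(k-1)+5$ for some $k\ge 3$ I would take a general $k$-gonal curve and set $\mathcal{D}=r\,g^1_k+D$ with $\deg D=g-5-r(k-1)\ge 0$: Lemma \ref{kveryample} gives $\dim\mathcal{D}=r$, very ampleness follows from the $k$-gonal geometry for $k\ge 3$, and completeness with $h^1=5$ is automatic. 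The low-genus borderline triples --- the nonempty ranges $3\le r\le 8$ of (c), $3\le r\le 11$ of (d), and the small-$r$ part of (b) --- are exactly those with $g\le\pi(g-r+3,4)$; there the residual curve $C^\vee$ of degree $g-r+3$ in $\PP^4$ is (nearly) extremal, so I would place it on a surface of minimal or near-minimal degree (scroll, Hirzebruch, or blown-up plane $\PP^2_{9-r}$) and verify via Remark \ref{linearlynormalcriteria} and Proposition \ref{nearlyextremal} that its residual $\mathcal{D}$ is very ample and linearly normal.

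The hardest part will be twofold. First, the ``only if'' directions of (c) and (d) require excluding very ampleness of $\mathcal{D}$ for \emph{every} admissible compounded residual $\mathcal{D}^\vee=g^4_{g-r+3}$, not just a general one; this needs a complete enumeration of the special $g^4_{13}$ and $g^4_{14}$ on a genus-$g$ curve, backed by a dimension estimate on $\mathcal{G}^4_e$ through Proposition \ref{facts}, so that no residual escapes the dichotomy of Lemma \ref{easylemma1}. Second, and most delicate, is the existence in (b) at $g=r+9$ for $r\ge 7$, where $g>\pi(12,4)=15$ forces the residual to be compounded and yet the Hilbert scheme is still nonempty: here one must exhibit a specific compounded $\mathcal{D}^\vee=g^4_{12}$ --- of a type deliberately avoiding the bad configurations of Lemma \ref{easylemma1} --- whose residual remains very ample, and it is this construction, together with the tight numerics of the nearly extremal surface models, that carries the main weight of the proof.
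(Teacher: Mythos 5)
Your framework is the paper's: analyze the residual $\mathcal{D}^\vee=g^4_{g-r+3}$, use $\pi(g-r+3,4)$ to force it to be compounded in the ranges of (c), (d), kill those cases with Lemma \ref{easylemma1}, and handle the borderline existence cases by putting (nearly) extremal residual curves on surfaces of minimal or near-minimal degree. Those parts of your outline are sound. The genuine gap is in your main existence construction for (e), which is set up backwards. You put $\mathcal{D}=r\,g^1_k+D$ with $\deg D=g-5-r(k-1)$ and claim very ampleness. But Lemma \ref{kveryample} (with $m=0$, which is legitimate under your hypotheses) also gives $\dim|r\,g^1_k|=r$; since $|r\,g^1_k|+D\subseteq|r\,g^1_k+D|$ and both have dimension $r$, they coincide, so $D$ is a \emph{fixed divisor} of $\mathcal{D}$ and the moving part $|r\,g^1_k|$ is compounded (its morphism factors through the gonal map followed by the $r$-th Veronese of $\PP^1$). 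Thus your $\mathcal{D}$ is not even base-point-free, let alone very ample. Lemma \ref{kveryample} must be used dually, as in the paper's Claim (3.2.A): writing $g-r+3=4k+\sigma$, one shows $\dim|4g^1_k+q_1+\cdots+q_\sigma+t+s|=4$ for \emph{every} pair $t+s$, which by Riemann--Roch says exactly that the residual series $\mathcal{D}=|K_C-4g^1_k-(q_1+\cdots+q_\sigma)|=g^r_{g+r-5}$ separates points and tangent vectors. Note that there the multiplier stays equal to $4=\alpha-1$ while $k$ grows with $g$; in your version the multiplier is $r$ and $k$ is fixed, which creates a second, purely numerical defect: the hypothesis $g\ge 2m+n(k-1)$ of Lemma \ref{kveryample} forces $g\le r(k-1)+10$, so your construction only reaches $g$ in the windows $r(k-1)+5\le g\le r(k-1)+10$, and for $k\ge 3$ these windows miss, e.g., every $g$ with $r+13\le g\le 2r+4$ as soon as $r\ge 9$ --- precisely the genera (e) is about.

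Two smaller corrections. First, you call (b) for $r\ge 7$ the most delicate step; it is in fact the easiest: for $r\ge 6$ one has $\pi(2r+4,r)=r+9$, so the curves in question are extremal, and extremal curves exist on rational normal scrolls and are automatically linearly normal --- no compounded-residual construction is needed (alternatively, the compounded residual you seek is just $4g^1_3$ on a general trigonal curve, base-point-free and hence outside the reach of Lemma \ref{easylemma1}(ii), with $|K_C-4g^1_3|$ very ample again by Lemma \ref{kveryample}). Second, the corrected gonal construction for (e) requires $r\ge\alpha+1=6$ to verify its numerics, so the cases $r=3,4,5$ of (e) outside the Brill--Noether range need separate treatment (the paper uses explicit linear systems on del Pezzo and Bordiga surfaces together with the linear normality criteria of Remark \ref{linearlynormalcriteria}); your outline does not provide for these.
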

\begin{proof} 
\begin{enumerate}
\item[(a-i)]
$r=3$: If $g\le 10$ one has $\pi (g-2,3)\lneq g$ which is an absurdity. If $g=11$ we have $\pi_1(g-2,3)=10<g\le\pi(9,3)=12$ and hence $C$ is a nearly extremal curve lying on a quadric surface in $\PP^3$. However there is no pair $(a,b)\in\mathbb{N}\times\mathbb{N}$ satisfying 
$g=11=(a-1)(b-1)$ and $d=g-2=9=a+b$. 
\item[(a-ii)] $r=4$:  If $g\le r+7=11$,  
$g\le \pi(g-1,4)\le g-2$  a contradiction. A non-degenerate irreducible curve of degree $d=11$ in $\PP^4$ has maximal arithmetic genus $\pi(11,4)=12=r+8$ and such an extremal curve exists in the linear system $|4H-L|$ on a cubic surface scroll in $\PP^4$; cf. \cite[III, Theorem 2.5]{ACGH}. 
\item[(a-iii)] $r=5$: For $g\le 13=r+8$, one has $\pi(g,5)\le g-1$, a contradiction.
\item[(a-iv)] $r\ge 6$:
One computes $m:=[\frac{g+r-6}{r-1}]\le2$ if $r\ge 6$ and $g\le r+8$. In case $m=2$, one has $$g\le \pi (g+r-5,r)=\frac{m(m-1)}{2}(r-1)+m(g+r-6-2(r-1))=2g-r-9$$ which  is not compatible with the assumption $g\le r+8$.  Same contradiction occurs when $m=1$.
  
\end{enumerate}
\begin{enumerate}
\item[(b-i)] $r=3$:  For $g=r+9=12$, there exists a smooth curve of degree $d=10$ of type $(3,7)$ on a smooth quadric in $\PP^3$. There also exists a smooth curve in $\PP^3$ with $(d,g)=(10,12)$ on a 
non-singular cubic surface;  a proper transformation of a plane curve of degree $9$ with $5$ ordinary triple points and a node on  $\PP^2$ blown up at the $6$ assigned singular points; $C\in (9;3^5,2)$ on $\PP^2_6$. 
\item[(b-ii)] $r=4$: On a general trigonal curve of genus $g=13$, $|K-4g^1_3|=g^4_{12}$ is very ample; we may take $m=2, k=3, n=4$ in Lemma \ref{kveryample}.  Such a curve exists on a smooth cubic surface scroll in $\PP^4$ in the linear system $|3H+3L|$. There is another irreducible family of curves lying on a del Pezzo  surface  $\PP^2_5\subset\PP^4$; $C\in (9;3^5)$. Both curves are linearly normal; if not, $C$ is an image of a  projection from a curve $\tilde{C}\subset\PP^5$ of the same degree $d=12$, contradicting the Castelnuovo's genus bound $\pi(12,5)=10>g=r+9=13$. In the next section we will see that these two types of curves form the only two components of $\HL{12,13,4}$; cf. Proposition \ref{g=r+9} (v).


\item[(b-iii)] $r=5$: Similarly there exists a smooth linearly normal curve of degree $d=g=14$ on a smooth quartic surface scroll $S\subset\PP^5$. We may take $C\in |3H+2L|$ which is trigonal.
Note that by adjunction, $$g^5_{14}=|K_C-4g^1_3|=|K_S+C-4L|_{|C}=|H|_{|C},$$ which is complete and hence $C$ is linearly normal. Alternatively, since $\pi (14,6)=11<g$, every smooth curve $C\subset\PP^5$ with $(d,g)=(14,14)$ is linearly normal. 

\item[(b-iv)] $r\ge 6$: For $g=r+9$ we have $\pi (g+r-5,r)=r+9$ and the curve corresponding to a  general  element of a component of $\h{H}_{2r+4,r+9,r}$ is an extremal curve. An extremal curve is always linearly normal and hence  $$\HL{2r+4,r+9,r}=\h{H}_{2r+4,r+9,r}\neq\emptyset.$$
\end{enumerate}
\vni
\vni
(c) We first show that $\HL{g+r-5, g,r}=\emptyset$ when $g=r+10$ and $r\ge 9$. Let $\h{D}\in\h{G}$ be a complete very ample $g^r_{g+r-5}$ and set $\h{E}:=\h{D}^\vee=g^4_{13}$. Since $$\pi (13,4)=18<19\le r+10=g,$$ 
$\h{E}$ is compounded with non-empty base locus $\Delta$, $\deg\Delta=\delta\ge 1$  inducing a $k$-sheeted map $\eta: C\rightarrow E$ onto a possibly singular non-degenerate curve $E\subset\PP^4$, $\deg E=f$. The following triples $(k,f,\delta )$ are possible subject to the conditions $$k\ge 2, ~\deg E=f\ge \dim\PP^4, ~k\cdot f+\delta=13.$$

\begin{enumerate}
\item[(1)]
$(k,f,\delta )=(2,6,1)$; $C\stackrel{\eta}\rightarrow E$ is a double cover of a curve $E$ of genus $2$ and $\h{E}=\eta^*(g^4_6)+\Delta$.

\item[(2)]
$(k,f,\delta ) =(2,5, 3)$; $C\stackrel{\eta}\rightarrow E$ is a double cover of a curve $E$ of genus $1$ and $\h{E}=\eta^*(g^4_5)+\Delta$.

\item[(3)]
$(k,f,\delta ) =(2,4, 5)$; $C$ is hyperelliptic.

\item[(4)]
$(k,f,\delta ) =(3,4, 1)$; $C$ is trigonal and $\h{E}=4g^1_3+\Delta$. 
\end{enumerate}

\vni
Note that linear series $\w{\h{E}}$ on $E$ (or on the normalization of $E$) such that $\eta^*(\w{\h{E}})+\Delta=\h{E}$ is complete  since $\h{E}$ is complete. $\w{\h{E}}$ is non-special by Clifford's theorem.

\vni In the cases (1), (2), (4), the residual series $\h{D}=\h{E}^\vee$ is not very ample by Lemma \ref{easylemma1}; we may take $\al =5, e=13$ in Lemma \ref{easylemma1}. In the case (3), we recall that a hyperelliptic curve does not carry a special very ample linear series. Hence 
$\HL{g+r-5,r+10,r}=\emptyset$ if $r\ge 9$.

\vni
\vni
We next show $$\HL{2r+5.r+10.r}=\h{H}_{2r+5,r+10,r}\neq\emptyset ~ \textrm{ for } 3\le r\le 8.$$
We consider $\PP^2_{9-r}\subset\PP^r$ which may be realized as a  smooth surface in $\PP^r$ embedded by the anti canonical linear system $|-K_{\PP^2_{9-r}}|=(3,1^{9-r})$.  Set
\begin{equation}\label{octic}\h{L}(\beta,r):=(8;3,2^{8-r-\beta},1^{\beta}); ~~0\le\beta\le 8-r.
\end{equation}
Let $C\in\h{L}(\beta,r)$ be a general member.
 Since $\h{L}(\beta,r)$ is very ample by \cite[Theorem]{sandra} or \cite[V, 4.13]{Hartshorne}, $C\subset \PP^r$ is smooth of degree $d$ and genus $g$ where
\begin{eqnarray}
&d=\h{L}(\beta,r)\cdot (-K_{\PP^2_{9-r}})=(8;3,2^{8-r-\beta},1^{\beta})\cdot (3;1^{9-r})=2r+5+\beta, \label{degree}
\\
&g=\frac{(8-1)(8-2)}{2}-3-(8-r-\beta)=r+10+\beta.\label{genus}
\end{eqnarray}
\vni
We take $\beta=0$, then 
\begin{equation*}\h{L}(0,r)=(8;3,2^{8-r},1^0)=(8;3,2^{8-r}),
\end{equation*}
according to the convention we employed in Remark  \ref{linearlynormalcriteria} (b).

\vni
Note that a smooth curve
of degree $d=2r+5$ and genus $g=r+10$ in $\PP^r$ is linearly normal since 
$$\pi (2r+5,r+1) < g=r+10;$$
i.e. if it is not linearly normal then one may embed the curve in a higher dimensional projective space $\PP^{r+1}$ and up there $g=r+10$ is too big  for the genus of a smooth curve of degree $d=2r+5$.  Alternatively, one may 
use Remark \ref{linearlynormalcriteria} (c)(iii). 

\vni
(d) If $g=r+11$ and $r\ge 12$ -- as in the first half of the previous case (c) ($g=r+10$ and $r\ge 9$) -- 
we have $$g=r+11\ge 23>\pi (14,4)=22$$ and hence $\h{E}$ is compounded with (possibly empty) base locus $\Delta$
inducing a $k^{\ge 2}$ -- sheeted map onto a curve of degree $f$ in $\PP^4$. 
Hence Lemma \ref{easylemma1} applies and we may  conclude $\HL{g+r-5. r+11.r}=\emptyset$ for $r\ge 12$. We omit the details. 







\vni
In what follows, we show $\HL{g+r-5,g,r}=\HL{2r+6, r+11,r}\neq\emptyset$ for $3\le r\le 11$.

\vni
\begin{enumerate}
\item[(d-i)]
We first deal with the cases $3\le r\le 7$ using the same surface $\PP^2_{9-r}\subset\PP^r$ embedded by $|-K_{\PP^2_{9-r}}|$. We take $\beta=1$ in  the linear system (\ref{octic}) $$\h{L}(\beta,r)=(8;3,2^{8-r-\beta},1^{\beta})$$ and obtain 
$$\h{H}_{2r+6,r+11,r}\neq\emptyset ~\mathrm{if }  ~3\le r\le 7. $$ Note that the obvious condition $8-r-\beta\ge 0$ on one of the exponent in $\mathcal{L}(\beta,r)$ 
forces $r\le 7$ if $\beta =1$.
\vni
If $5\le r\le 7$, a smooth curve
of degree $d=2r+6$ and genus $g=r+11$ in $\PP^r$ is linearly normal since 
 $$g=r+11 > \pi (2r+6,r+1)$$
 and hence 
 $$\h{H}_{2r+6,r+11,r}=\HL{2r+6,r+11,r}\neq\emptyset ~\mathrm{ if}~ 5\le r\le 7.$$  For $r=3,4$ we have $$\HL{2r+6,r+11,r}\neq\emptyset$$ by Remark \ref{linearlynormalcriteria} (c)(iii). 
 
 \vni 
\vni

\vni
\item[(d-ii)] For the cases $8\le r\le 11$, we take the non-singular surface $S$ as
 $$\PP^2_{12-r}\lhook\joinrel\xrightarrow{|(4;2,1^{11-r})|} S\subset
 \PP^r,$$
embedded by the very ample\footnote{ $(4;2,1^{11-r})$ is very ample even for $4\le r\le 11.$} 
 linear system $(4;2,1^{11-r})$; cf.  \cite[Theorem]{sandra}. We consider a linear system
 \begin{equation}\label{nonic}
\h{M}(\gamma,r)= (9;4,2^{11-r-\gamma}, 1^{\gamma}), ~~0\le\gamma\le 11-r
 \end{equation}
 and 
let $C\in\h{M}(\gamma,r)$ be a general member.
 Since $\h{M}(\gamma,r)$ is very ample by \cite[Theorem]{sandra}, $C$ is smooth of degree $d$ and genus $g$ in $\PP^r$ where
\begin{eqnarray}
d=\h{M}(\gamma,r)\cdot (4;2,1^{11-r})=
2r+6+\gamma, \label{degree9}
\\
g=\frac{(9-1)(9-2)}{2}-\frac{4\cdot3}{2}-(11-r-\gamma)=r+11+\gamma.\label{genus9}
\end{eqnarray}
\vni
For the particular value $\gamma=0$, we have 
\begin{equation*}\h{M}(0,r)=(9;4,2^{11-r}).
\end{equation*}
~Note that $$g=r+11 > \pi (2r+6,r+1),$$ holds\footnote{This strict inequality holds for $5\le r\le 11$.} for $8\le r\le 11$ and hence a general member in $\h{M}(0,r)$ is linearly normal 
of degree $d=2r+6$ and genus $g=r+11$ in $\PP^r$.
Therefore we may claim 
$$\HL{2r+6,r+11,r}\neq\emptyset \textrm{ for } 8\le r\le 11.$$

\end{enumerate}
\vni

\vni
(e) In order to avoid unnecessarily complicated computation we first show 

(e-i) $\HL{g+r-5,g, r}\neq\emptyset$ for $g=r+12$, $r\ge 3$ and then proceed to show 

(e-ii) $\HL{g+r-5,g, r}\neq\emptyset$ for $g\ge r+13$, $r\ge 3$.

\begin{enumerate}
\item[(e-i-1)] $g=r+12$, $3\le r\le 6$: We take $\beta =2$ in the linear system (\ref{octic}), 
$$\h{L}(2,r)=(8;3,2^{6-r},1^2)$$ containing a smooth curve of degree $d=2r+7$ and genus $g=r+12$ by
(\ref{degree}), (\ref{genus}).
The linear normality follows from Remark \ref{linearlynormalcriteria} (c).

\item[(e-i-2)] $g=r+12, \, 7\le r\le 10$:  We take the linear system $\h{M}(\gamma,r)$ in (\ref{nonic}) on the surface 
$\PP^2_{12-r}\subset\PP^r$ and put $\gamma =1$. 
A general member of the very ample linear system 
 \begin{equation*}
\h{M}(1,r)=(9;4, 2^{10-r},1)
 \end{equation*}
\vni 
 is smooth of degree $d=2r+7$ and genus $g=r+12$ in $\PP^r$ by (\ref{degree9}), (\ref{genus9}). Assume that a general $C\in\h{M}(1,r)$ is not linearly normal. Let $\w{C}\subset\PP^{r+1}$ be a curve of degree $d=2r+7$ which is projected onto $C\subset\PP^r$ from a point outside $\w{C}$.
Note that  $$g=r+12 =\pi (2r+7,r+1)$$ for $7\le r\le 10$ and  hence $\w{C}\subset\PP^{r+1}$ is an extremal curve lying on surface of minimal degree in
$\PP^{r+1}$. In case $\w{C}$ lies on a smooth scroll, $$\w{C}\in |(m+1)H-(r+1-\epsilon-2)L|=|3H-(r-7)L|$$ where
$m:=[\frac{d-1}{r}]=2, \epsilon =d-1-mr$; cf. \cite[Theorem 2.5, p122]{ACGH}. Hence $C\cong\w{C}$ is trigonal with the trigonal pencil cut out by the rulings of the scroll. 
On the other hand, a general $C\in\h{M}(1,r)$ has a base-point-free $g^1_5$ cut out by lines through the $4$-fold point on the plane model of $C$. By the Castelnuovo-Severi inequality, 
$g\le (3-1)\cdot (5-1)=8$, a contradiction.  If $\w{C}\subset\PP^{r+1}$ lies on a rational normal cone, we put $n=r$, $d=2r+7$, $p_a(\w{C})=r+12$ in Remark \ref{minimal} (iii) (in
the genus formula \eqref{cone}) to get $k=3$ hence $\w{C}$ is trigonal leading to the same contradiction.

\item[(e-i-3)]  
$g=r+12, r\ge 10$;  we take $\al =5$ in Corollary \ref{triple1} and may conclude $\HL{2r+7,r+12,r}\neq\emptyset$.
\vni
\vni
\item[(e-ii)] $g\ge r+13$: We are primarily interested in the existence of linearly normal curves in $\PP^r$ with index of speciality $\al=5$. However we will show the existence of linearly normal curves in $\PP^r$ with any given index of speciality $\al \ge 2$ under an additional condition $r\ge \al+1$ together with a mild restriction on the genus $g$. We use Lemma \ref{kveryample} regarding the dimension of the linear series which is a multiple of $g^1_k$ on  general $k$-gonal curves. Our assertion
$$\HL{g+r-5,g, r}\neq\emptyset \textrm {~for~ } g\ge r+13$$
is an immediate consequence of the following claim when $r\ge 6$.

\item[(e-ii-1)]
\vni
{\bf Claim (3.2.A): Fix an  integer $\alpha \ge 3$. For every  $r\ge\al+1$ and $g\ge r+3\al-2$, $$\HL{g+r-\al,g,r}\neq\emptyset.$$ } 
\vni
We set $$e:=g-r+\al -2=\deg |K_C-\h{D}|;  ~\h{D}\in\h{G}\subset\widetilde{\h{G}}_\h{L}\subset\h{G}^r_{g+r-\al}$$ and
$$e\equiv \sigma ~(\textrm{mod}~\al-1),~~ 0\le \sigma\le \al-2.$$

\vni
By the assumption $g\ge r+3\al -2$, we have
\begin{equation}\label{alpha}
e=g-r+\al -2=(\al -1)k+\sigma\ge 4(\al -1)
\end{equation}
 for some $k\ge 4$.  Since $r\ge\al+1$,  we have 
 \begin{align*}
\rho (k,g,1)&=2k-g-2=-(\al-3)k-r-\sigma+\al-4\\
&\le-(\al-3)k-(\al+1)-\sigma+\al-4\\
&=-(\al-3)k-\sigma-5\lneqq 0
\end{align*}
and hence $\h{M}^1_{g,k}\subsetneq\h{M}_g$.

\vni
 Let  $C\in\h{M}^1_{g,k}$ be a general $k$-gonal curve.
 We choose $q_1+\cdots +q_{\sigma}\in C_{\sigma}$ such that no pair $(q_i,q_j)$, $q_i\neq q_j$ are in the same fibre of the $k$-sheeted map onto $\PP^1$ defined by the unique $g^1_k$ on $C$.
For an arbitrary choice $t+s\in C_2$, we take $$D=q_1+\cdots +q_\sigma+t+s\in C_{\sigma +2}, m=\sigma +2 \textrm{ and } n=\al -1$$ in Lemma \ref{kveryample}. By the choice of $q_1+\cdots +q_\sigma\in C_\sigma$,  there is no $E\in g^1_k$ such that $E\le D$ . Furthermore the numerical condition (\ref{veryamplek}) in Lemma \ref{kveryample} is satisfied; via direct computation together with the condition\footnote{The numerical condition (6) in Lemma \ref{kveryample} holds when $r\ge \sigma +3$ which automatically holds if $r\ge\al +1$.} $r\ge \al+1$. Hence $$\dim |(\al -1)g^1_k+D|=\al -1$$ by Lemma \ref{kveryample}, which implies  $$|K_C-(\al -1)g^1_k-(q_1+\cdots +q_\sigma)|=g^{r}_{2g-2-k(\al -1)-\sigma}=g^r_{g+r-\al}$$ is very ample (and complete),  finishing the proof of the claim. 
\vni
\item[(e-ii-2)] $3\le r\le 5$: If $r=3$, the existence of smooth linearly normal curves of degree $d=g+r-5=g-2$, $g\ge r+13=16$ in $\PP^3$ follows from works of Gruson-Peskine \cite{Gruson} and \cite{DP}. The existence of smooth linearly normal curves of degree $d=g+r-5=g-1$ ($d=g$ resp.) with $g\ge r+13=17$ ($g\ge 18$ resp.) in $\PP^4$ ($\PP^5$ resp.) follows from the work of Rathman \cite{rathman}.
 However, we will show the existence and the linear normality of such curves 
through explicit examples we have seen in this proof. 

\vni By Remark \ref{general} (ii) we only need to check the existence of linearly normal curves outside the Brill-Noether range; $\rho (g+r-5, g,r)<0$, $3\le r\le 5$, i.e.
\[
\begin{cases} r+13=16\le g < (r+1)(g-d+r)=20; &r=3\\
r+13=17\le g<(r+1)(g-d+r)=25; &r=4\\
r+13=18\le g<(r+1)(g-d+r)=30;&r=5.
\end{cases}
\]

\vni (i) $r=3$: The very ample linear system  (\ref{octic})
$$\h{L}(\beta,r)=(8;3,2^{8-r-\beta},1^{\beta})$$ with $r=3$ and $3\le\beta\le 5$ on $\PP^2_{9-r}=\PP^2_6\lhook\joinrel\xrightarrow{|(3;1^6|}
 \PP^3$ covers $16\le g=r+10+\beta\le 18$; cf. \eqref{degree}, \eqref{genus}. For the case $g=19$, we use
the very ample linear system $(10;6,2^2,1^3)$ on $\PP^2_{9-r}=\PP^2_6\lhook\joinrel\xrightarrow{|(3;1^6|}
 \PP^3$. Linear normality follows from Remark \ref{linearlynormalcriteria} (c).

\vni (ii) $r=4$: Let $S\subset\PP^4$ be a Bordiga surface in $\PP^4$, which is the rational surface $\PP^2_{10}$ blown up at $10$ general points and embedded into $\PP^4$ by the linear system $(4;1^{10})$. 
Consider $$\h{B}(\delta)=(9;3,2^\delta,1^{9-\delta})=(4;1^{10})+(5;2,1^\delta, 0^{9-\delta}); 1\le\delta\le8, $$
which is a sum of a very ample and a base-point-free linear system. Hence $\h{B}(\delta)$ is very ample whose general member is an irreducible, non-singular curve $C$ of degree $d$ and genus $g$ where
$$d=(9;3,2^\delta,1^{9-\delta})\cdot(4;1^{10})=24-\delta,$$
$$g=\frac{(9-1)(9-2)}{2}-\frac{3\cdot 2}{2}-\delta=25-\delta, ~~1\le\delta\le 8.$$
A general $C\in (9;3,2^\delta,1^{9-\delta})$ is linearly normal by Corollary \ref{appendixb}.
\vni
\vni
(iii) 
$r=5$: We may find a smooth curve of degree $d=g$ in $\PP^5$ on a Bordiga surface $\PP^2_9\stackrel{(4;1^9)}{\hookrightarrow}\PP^5$ as follows.
\begin{itemize} 
\item[{(1)}] $18\le g=d\le 21$: Take $$C\in (10;3^5,2^{\delta -1},1^{5-\delta}), ~1\le\delta\le4$$ on $\PP^2_9$. Note that 
\begin{eqnarray*}
(10;3^5,2^{\delta -1},1^{5-\delta})&=&(4;1^9)+(6;2^5,1^{\delta -1},0^{5-\delta})\\&=&(4;1^9)+(3;1^5,0^4)+(3;1^5,1^{\delta -1}, 0^{5-\delta}),
\end{eqnarray*}
where $(4;1^9)$ is very ample,  both $(3;1^5,0^4)$ and $(3;1^5,1^{\delta -1}, 0^{5-\delta})$ are base-point-free hence $(10;3^5,2^{\delta -1},1^{5-\delta})$ is very ample containing an irreducible and non-singular curve $C$ as a general member by Bertini. 
One computes
\begin{eqnarray*}p_a(C)&=&\frac{1}{2}(C+K_{\PP^2_9})\cdot C+1\\
&=&\frac{1}{2}((10;3^5,2^{\delta -1},1^{5-\delta})+(-3; -1^9))\cdot (10;3^5,2,^{\delta -1},1^{5-\delta})+1\\&=&
\frac{1}{2}(7;2^5,1^{\delta -1},0^{5-\delta})\cdot (10;3^5,2^{\delta -1},1^{5-\delta})+1=22-\delta\end{eqnarray*}
and $$d=(10;3^5,2^{\delta -1},1^{5-\delta})\cdot (4;1^9)=22-\delta.$$ 
 $C$ is linearly normal  by Corollary \ref{appendixb}.
\item[{(2)}] $22\le g=d\le25$: Take 
$$C\in (12;6,4,3^3,2^{4-\delta},1^\delta), 1\le \delta\le 4.$$ By the same reason as the previous one, 
\begin{align*}(12;6,4,3^3,2^{4-\delta},1^\delta)&=(4;1^9)+(8;5,3,2^3,1^{4-\delta},0^{\delta})\\&=(4;1^9)+(5;3,2,1^{7-\delta}, 0^\delta)+(3;2,1^4,0^4)
\end{align*}
 is very ample; $(5;3,2,1^{7-\delta})$ ($(3;2,1^4)$ resp.) is base-point-free on $\PP^2_{9-\delta}$ ($\PP^2_5$ resp.) by \cite{sandra}. Hence $(5;3,2,1^{7-\delta},0^\delta)$ ($(3;2,1^4,0^4)$ resp.) is base-point-free on $\PP^2_{9}$. An easy calculation  yields
 $$d=g=21+\delta, ~1\le\delta\le 4.$$
 
\item[{(3)}] $26\le g=d\le 29$: Taking 
$$C\in (14;8,4^3,3,2^{4-\delta},1^\delta),$$ we have  
\begin{align*}(14;8,4^3,3,2^{4-\delta},1^\delta)&=(4;1^9)+(10;7,3^3,2,1^{4-\delta},0^\delta)\\&=(4;1^9)+(6;4,2^3,1^{5-\delta},0^\delta)+(4;3,1^4,0^4).
\end{align*}
Likewise, the second and third summands are base-point-free on $\PP^2_9$ and hence 
$(14;8,4^3,3,2^{4-\delta},1^\delta)$ is very ample whose general member $C$ has genus $g=25+\delta ~(1\le\delta\le 4)$ and 
degree $d=g$ in $\PP^5$.
\end{itemize}
\end{enumerate} \end{proof}

\vni
Before closing the section, we would like to make the following remark which is worthy of being mentioned.

\begin{rmk} 
\begin{itemize}
\item[(i)]
The peculiar and seemingly artificial linear systems  $\Ll(\beta,r)$ and $\Mm(\gamma,r)$  in (\ref{octic}), (\ref{nonic}) on del Pezzo  surfaces which appear in the proof of Theorem \ref{sub6x} arise naturally by analyzing nearly extremal curves induced by $\h{E}=\h{D}^\vee$ corresponding to a  general element  $\h{D}\in\h{G}\subset\h{G}^r_{g+r-5}$. More systematic treatment appears in the course of the proof of Theorem \ref{r+10} when we check the irreducibility of $\HL{2r+5,r+10,r}$.
\item[(ii)] Hilbert schemes $\HL{2r+10,r+5,r}$ and $\HL{2r+11,r+6,r}$ are non-empty only for low $r$.  For $\al=3, 4$, there are also such Hilbert schemes $\HL{g+r-\al,g,r}$; cf. \cite[Proposition 3.3]{lengthy}, \cite[Theorem 2.4]{speciality}. However this phenomenon is not so unusual under our setting, i.e. studying smooth projective curves in connection with the index of speciality.  We will see this phenomenon in a more general context in the final section; cf. Proposition \ref{r+2a+1}.
\item[(iii)] The following is an illustration of the result we obtained in Theorem \ref{sub6x}.
\end{itemize}
\end{rmk}


\pagebreak
\begin{figure}
\begin{tikzpicture}
\begin{axis}
[grid=both, transpose legend,
legend columns=8,
legend style={at={(0.5,-0.2)},anchor=north,font=\footnotesize},
xlabel=Dimension of the projective space $r$, ylabel=Genus $g$,
ymin=3,ymax=49,enlargelimits=false]

\addplot+  [
sharp plot
] coordinates {
(3,12) (4,12) (4,13) (5,14) (6,15) (6,15) (7,16) (8,17) (9,18) (10,19) (11,20) (12,21) (13,22)
}; \addlegendentry{$\{(r,g) | g=r+9, r\ge 3\}\cup\{(4,12)\}\subset\mathbb{N}\times\mathbb{N}$,
$\mathcal{H}^\mathcal{L}_{d,g,r}\neq\emptyset$}

\addplot+ [
sharp plot,
] coordinates {
(3,11)  (5,13) (6,14) (7,15) (8,16) (9,17) (10,18) (11,19) (12,20) (13,21)
}; \addlegendentry{$\{(r,g) | g=r+8, r\ge 3\}\setminus\{(4,12)\}\subset\mathbb{N}\times\mathbb{N}$,
$\mathcal{H}^\mathcal{L}_{d,g,r}=\emptyset$

}
\addplot+ [
sharp plot,
] coordinates {
(3,13)  (4,14) (5,15) (6,16) (7,17) (8,18)
}; \addlegendentry{$\{(r,g) | g=r+10, r\ge 3\}$,
$\mathcal{H}^\mathcal{L}_{d,g,r}\neq\emptyset \Longleftrightarrow 3\le r\le 8$
}

\addplot+ [
sharp plot,
] coordinates {
(3,14)  (4,15) (5,16) (6,17) (7,18) (8,19) (9,20) (10,21) (11,22)
}; \addlegendentry{
$\{(r,g) | g=r+11, r\ge 3\}$,
$\mathcal{H}^\mathcal{L}_{d,g,r}\neq\emptyset \Longleftrightarrow 3\le r\le 11$
}

\addplot+  [
sharp plot,
] coordinates {
(3,15)  (4,16) (5,17) (6,18) (7,19) (8,20) (9,21) (10,22) (11,23) (12,24) (13,25)
};
\addlegendentry{$\{(r,g) | g=r+12, r\ge 3\}$,
$\mathcal{H}^\mathcal{L}_{d,g,r}\neq\emptyset$

}


\addplot [red!80!black,fill=red,fill
opacity=0.5,
] coordinates {
(3,11) (4,12) (5,13) (6,14)
(7,15) (8,16) (9,17) (10,18)
(11,19) (12,20) (13,21)
}
|- (0,0) -- cycle; \addlegendentry{Pink Zone, $\mathcal{H}^\mathcal{L}_{d,g,r}=\emptyset$}

\addplot [green!20!black] coordinates {
(3,20) (5,30) (13,70)
};

\addplot [red,name path=A, smooth] table {
x y
3 20
13 70
};
\addplot [blue,name path=B, smooth] table {
x y
3 49
13 49
};
\addplot [green!100] fill between [
of=A and B,
soft clip={domain=1:1},
];\addlegendentry{Blue Zone,  Off Brill-Noether  range, $\mathcal{H}^\mathcal{L}_{d,g,r}\neq\emptyset$}
\addplot [blue,name path=C, smooth] 
table 
{x y
3 20
13 70
};
\addplot [blue,name path=D, smooth] 
table {
x y
3 15
13 25
};
\addplot [blue!60] fill between [
of=C and D,
soft clip={domain=1:1},
];\addlegendentry{Green Zone,  Brill-Noether range, $\mathcal{H}^\mathcal{L}_{d,g,r}\neq\emptyset$}
\end{axis}
\end{tikzpicture}
\caption{Existence and non-existence; $\alpha=g-d+r=5$}
\label{fig:existence}
\end{figure}
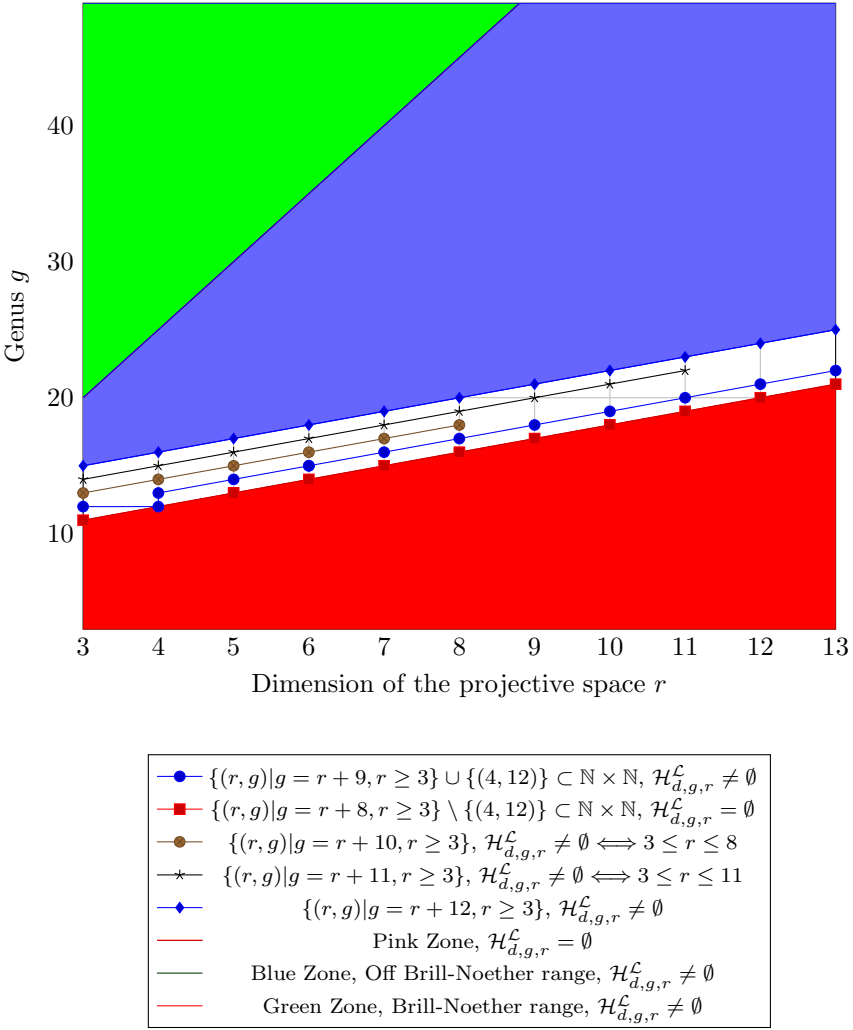

\section{Irreducibility of $\HL{g+r-5, g,r}$ for lowest possible genus $g$}
In this section, we determine the irreducibility of $\HL{g+r-5, g,r}$ when the genus $g$ has the lowest possible 
value with respect to $r$. Recall that by Theorem \ref{sub6x} (a), (b), we know
\begin{itemize} 
\item$
\HL{g+r-5,g,r}=\h{H}_{g+r-5,g,r}=\emptyset\textrm{ for } g\le r+8, r\neq 4
$ and $\HL{g+r-5,g,r}\neq\emptyset$
for  $g=r+9$, $r\ge 3$.
\item For $r=4$, 
$\HL{g+r-5,g,r}=\HL{g-1,g,4}=\emptyset$  if $g\le r+7=11$ whereas 
$\HL{g-1,g,4}\neq\emptyset$ for 
$g=r+8=12$.
\end{itemize}

\vni In general, determining the irreducibility of Hilbert schemes of smooth curves is rather a hard task. However one may obtain the following irreducibility result when the genus $g$ has the minimal possible value
$g=r+9$ ($r\neq 4$) or $g=r+8, r+9$ ($r=4$). 

\begin{prop}\begin{itemize}\label{g=r+9}
\item[(i)] $\HL{g+r-5,g,r}\neq\emptyset$, $g=r+9$   is irreducible for every $r\ge 7$.
\item[(ii)] $\HL{g+r-5,g,r}\neq\emptyset$, $g=r+9$   is {\bf reducible} for $r= 6$.
\item[(iii)] $\HL{g+r-5,g,r}\neq\emptyset$, $g=r+9$ is irreducible for $r=5$.

\item[(iv)] $\HL{g+r-5,g,r}\neq\emptyset$,
$g=r+8$ is irreducible for $r=4$.

\item[(v)] $\HL{g+r-5,g,r}\neq\emptyset$,
$g=r+9$ is {\bf reducible} for $r=4$.
\item[(vi)] $\HL{g+r-5,g,r}\neq\emptyset$, $g=r+9$ is {\bf reducible} for $r=3$.
\end{itemize}
\end{prop}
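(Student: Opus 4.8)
The plan is to use the fact that for $g=r+9$ the curve $C\subset\PP^r$ itself — not merely its residual curve — is \emph{extremal or nearly extremal}: one checks $\pi(2r+4,r)=r+9$ for every $r\ge 6$, while for $r=5,4,3$ the pair $(d,g)$ sits just at or below the relevant Castelnuovo thresholds. Hence a general member of any component lies on a surface of minimal (or next-to-minimal) degree, and I can read off the components directly from the admissible divisor classes on such surfaces. I would organize everything around the genus of a curve of class $aH+bL$ on a rational normal scroll of degree $r-1$: combining (\ref{scrolldegree}) and (\ref{scrollgenus}) and eliminating $b$ gives $g(a)=(a-1)\left(d-1-\frac{1}{2}a(r-1)\right)$, and the two identities $g(3)=r+9$ \emph{for every} $r$ and $g(4)=15$ \emph{independently of} $r$ will drive the whole dichotomy.

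For $r\ge 6$ the curve $C$ is extremal, so by Castelnuovo's theory it lies on a surface of minimal degree; since no Veronese surface exists once $r\ge 6$ and cones are specializations of smooth scrolls (Remark \ref{cones}), I may assume $C$ lies on a smooth scroll with $a=C\cdot f$. The equation $g(a)=r+9$ then has the solution $a=3$ always, and the solution $a=4$ exactly when $15=r+9$, i.e. $r=6$. Thus for $r\ge 7$ only $a=3$ survives: $C$ is trigonal with $\h{D}=|K_C-4g^1_3|$, and as the trigonal locus in $\mathcal{M}_g$ is irreducible and carries a unique $g^1_3$, the resulting family is irreducible, giving (i). (Here one must check that the class $3H+(7-r)L$, with negative $L$-coefficient for $r\ge 8$, still contains irreducible members; a short computation in the $C_0,f$-basis shows this holds on the \emph{balanced} scroll.) For $r=6$ both $a=3$ (trigonal) and $a=4$ (tetragonal) yield irreducible extremal curves of genus $15$ whose general members have different gonality, so they occupy distinct components, giving the reducibility in (ii). The exact component count in each class is pinned down by Proposition \ref{nearlyextremal} together with the Severi-variety irreducibility of Remark \ref{Severi}(ii).

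The small $r$ cases I would treat individually. For $r=5$, $C$ is nearly extremal ($g=14$, $\pi(14,5)=15$, and a quick check gives $\pi_1(14,5)<14$), so it lies on a minimal degree-$4$ surface; the Veronese is excluded because by Remark \ref{minimal}(ii) its degree-$14$ curves have genus $15\ne14$, hence $C$ lies on a scroll, where only $a=3$ realizes genus $14$, forcing $C$ trigonal and the family irreducible — (iii). For $r=4$, $g=12$ (so $d=11$) is extremal with $\pi(11,4)=12$, realized only by $|4H-L|$ on the cubic scroll, a single family, hence irreducible — (iv); whereas $g=13$ ($d=12$) sits exactly at $\pi_1(12,4)=13$, so $C$ lies either on the cubic scroll (trigonal class $|3H+3L|$) or on the quintic del Pezzo $\PP^2_5$ (class $(9;3^5)$), two distinct families, hence reducible — (v). Finally for $r=3$, $g=12$, $d=10$, the two classical families of Theorem \ref{sub6x}(b-i) — curves of type $(3,7)$ on a smooth quadric and the proper transforms $(9;3^5,2)$ on a cubic surface — are distinct, giving reducibility — (vi).

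The main obstacle is not the enumeration of divisor classes but the passage from ``curve on a surface'' to ``component of $\HL{d,g,r}$'' with the correct number of components. Concretely I expect two delicate points. First, for the irreducible cases I must show that each candidate class genuinely contains smooth, irreducible, linearly normal curves forming a single irreducible family that exhausts $\HL{d,g,r}$: Proposition \ref{nearlyextremal} (valid for $r\ne3,5$) and Remark \ref{Severi}(ii) supply this where applicable, but the excluded cases $r=5$ and $r=3$ must be handled by hand (the genus-parity argument on the Veronese for $r=5$, and a direct surface analysis for $r=3$). Second, for the reducible cases I must prove that the two exhibited families lie in genuinely different components rather than being two descriptions of one; this is cleanest through the discrete invariant of gonality of the general member (for instance $3$ versus $4$ when $r=6$). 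As a consistency check and a secondary input, I would confirm via the residual series $\h{E}=\h{D}^\vee=g^4_{12}$ and Lemma \ref{easylemma1} that for $r\ge7$ no compounded configuration other than $\h{E}=4g^1_3$ can make $\h{D}$ very ample, re-establishing that the trigonal family is the only component.
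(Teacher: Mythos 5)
Your treatment of parts (i)--(iv) is essentially the paper's own proof: for $r\ge 6$ and for $(r,g)=(4,12)$ the curve itself is extremal, the genus equation on a scroll admits only $a=3$ when $r\ge 7$ and only $(a,b)=(4,-1)$ when $(r,g)=(4,12)$, admits both $a=3$ and $a=4$ exactly when $r=6$, and Proposition \ref{nearlyextremal} converts each admissible pair into exactly one component; for $r=5$ the Veronese is excluded by the genus computation, only $a=3$ survives, cones are absorbed by Proposition \ref{specialization}, and the irreducibility of the trigonal locus (together with the very ampleness of $|K_C-4g^1_3|$ from Lemma \ref{kveryample}) gives a single component. All of this matches the paper.

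The genuine gap is in the reducible cases (v) and (vi). For $(r,g)=(4,13)$ you exhibit the trigonal family on the cubic scroll and the family $(9;3^5)$ on the quintic del Pezzo surface and conclude reducibility because the two families are "distinct", with distinctness certified by gonality. This is insufficient, and note that here $g=\pi_1(12,4)=13$ exactly, so Proposition \ref{nearlyextremal} (which requires $g>\pi_1(d,r)$) is unavailable and nothing promotes your families to components. Gonality is lower semicontinuous, so it only rules out the higher-gonality del Pezzo family lying in the closure of the trigonal component; it cannot exclude that \emph{both} families are proper subfamilies of one larger component whose general member is neither type --- the paper flags precisely this pitfall in a footnote. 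The paper closes the gap with a tangent-space computation: a general del Pezzo curve is a complete intersection of two quadrics and a cubic, so $N_{C|\PP^4}=\h{O}_C(2)\oplus\h{O}_C(2)\oplus\h{O}_C(3)$ and $h^0(C,N_{C|\PP^4})=50$, which equals the dimension of the del Pezzo family; hence that family is dense in its own component $\h{H}_4$ of dimension $50$, which cannot contain the $51$-dimensional trigonal family, and reducibility follows. Similarly, for $r=3$ you merely assert the quadric family and the cubic family are distinct; the paper's actual argument is that both families have dimension exactly $40=4d$, so (being irreducible of equal dimension and visibly different) neither lies in the closure of the other and each is a component. You would need to supply one of these arguments, or an equivalent component-certifying step, for (v) and (vi) to be proved.
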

\begin{proof} 
\hskip3pt  \begin{itemize}
\item[(i),(ii)] If $r\ge 6$ and $g=r+9$, a curve $C\subset \PP^r$ of degree $d=g+r-5=2r+4$ and genus 
$g$  is an extremal curve lying on a surface of minimal degree; $\pi(2r+4,r)=r+9$ if $r\ge 6$. A routine computation shows that if $r\ge 7$  there is only one pair $(a,b)=(3,7-r)\in\mathbb{N}\times\mathbb{Z}$ satisfying the degree and genus formula (\ref{scrolldegree}), (\ref{scrollgenus}) for curves lying on a rational normal surface scroll whereas there are two such pairs $\{(a,b)\, | \,(3,1), (4,-4)\}$ if $r=6$. Hence the result follows
by  Proposition \ref{nearlyextremal}; see also \cite[Theorem 2.5 and Corollary 2.6, page 122]{ACGH}.
\item[(iii)] $r=5$: Let $C\subset \PP^5$ be a smooth curve of genus $g=r+9=14$ and $d=g+r-5=14$.  Note that  
$$\pi_1(14,5)=13\lneq g=14\le\pi (14,5)=15$$ and hence $C$ is a nearly extremal curve lying on a quartic surface $S\subset\PP^5$. If  $S$ is smooth, $S$ is either a rational normal scroll or a Veronese surface. If $S$ is a Veronese surface, $C$ is an isomorphic image a smooth plane curve $C_0$ of degree $t$ with $14=d=2t$. Since $p_a(C)= g=14\neq p_a(C_0)=15$, this does not occur. 
\vni
Suppose that $S$ is a rational normal surface scroll and let $C\in |aH+bL|$. We solve equations (\ref{scrolldegree}) (\ref{scrollgenus}) 
 and get $a=3,\frac{9}{2}$. Hence $C\in |3H+2L|$ is trigonal whose trigonal pencil $g^1_3$ is cut out by rulings of the scroll. Since $$|K_S+C-H|=|(-2H+2L)+C-H|=|4L|$$ the very ample and complete hyperplane series  $g^5_{14}$ of $C\subset\PP^5$ is of the form $|K_C-4g^1_3|$. Conversely, on a trigonal curve $C$  of genus $g\ge12$, the complete linear series $|K_C-4g^1_3|$ is very ample by Lemma \ref{kveryample}. Hence there is an irreducible family $$\h{G}_{3,0}\subset \w{\h{G}}_\h{L}\subset\h{G}^5_{14}$$ sitting over the family of trigonal curves (and dominating the irreducible locus $\h{M}^1_{g,3}$) whose (general) member is of the form $|K_C-4g^1_3|$, which induces an embedding into
 $\PP^5$. Therefore we may deduce  $$\h{G}_{3,0}\stackrel{bir}{\cong} \dim\h{M}^1_{g,3},$$
$$\dim\h{G}_{3,0}=\dim\h{M}^1_{g.3}=2g+1>\lambda (g,g,5)=4g-33,$$ and the irreducible family $\h{H}_{3,0}\subset\HL{g,g,5}$ sitting over $\h{G}_{3,0}$ may constitute a full component. 
 
 \vni It is possible that our smooth linearly normal $C\subset\PP^5$ may lie on a singular quartic surface, which is a cone over a rational normal curve in $\PP^4\subset\PP^5$. Recalling that smooth curves on a cone over a rational normal curve are specializations of curves on a rational normal scroll by Proposition \ref{specialization}, we deduce that such curves  are 
 in the the boundary of the component containing the family $\h{H}_{3,0}$ over $\h{G}_{3,0}$.
\vni
It finally follows that  $\HL{14,14,5}$ is irreducible with only one component whose general element is a trigonal curve lying on a rational normal scroll in $\PP^5$ which we specified above.

\item[(iv)] $r=4, \,g=r+8=12$: Since $\pi(11,4)=g=12$, a smooth curve $C\subset\PP^4$ with $(d,g)=(11,12)$ is an extremal curve lying on a cubic surface. By an easy computation there is only one pair $(a,b)=(4,-1)\in\mathbb{N}\times\mathbb{Z}$ satisfying the degree and genus formula (\ref{scrolldegree}), (\ref{scrollgenus}) for curves lying on a rational normal surface scroll.  Hence the irreducibility follows
by  Proposition \ref{nearlyextremal}.
\vni
\item[(v)] $r=4,g=r+9=13, d=g-1=12$: This case is more involved  and  we need to argue as follows. 
\vni First note that $\pi_1(12,4)=13$. Hence $C$ lies on a cubic  or quartic surface by \cite[Theorem 3.15, page 99]{H1}. 
\vni
Suppose $C$ lies on a smooth cubic surface scroll $S\subset \PP^4$  and let $C\in |aH+bL|$; cf. Remark \ref{cones}. We solve equations (\ref{scrolldegree}) (\ref{scrollgenus}) 
 and get $a=3,\frac{16}{3}$. Hence $C\in |3H+3L|$ is trigonal whose trigonal pencil $g^1_3$ is cut out by rulings of the scroll. Since $$|K_S+C-H|=|(-2H+L)+C-H|=|4L|$$ the very ample and complete hyperplane series  $|H|=g^4_{12}$ of $C\subset\PP^4$ is of the form $|K_C-4g^1_3|$. 
 By the same reasoning as in the case (iii), we may deduce that 
there is an irreducible family 
$$\h{G}_{3,0}\subset\w{\h{G}}_\h{L}\subset\h{G}^4_{14}$$ sitting over the family of trigonal curves (and dominating the irreducible locus $\h{M}^1_{g,3}$) whose (general) member is of the form $|K_C-4g^1_3|$ and induces an embedding into
 $\PP^4$.
 Therefore $$\h{G}_{3,0}\stackrel{bir}{\cong} \dim\h{M}^1_{g,3},$$ 
$$\dim\h{G}_{3,0}=\dim\h{M}^1_{g.3}=2g+1>\lambda (g,g-1,4)=4g-28,$$ and the irreducible family $\h{H}_{3,0}\subset\HL{g,g-1,4}$ sitting over $\h{G}_{3,0}$ has dimension $$\dim\h{H}_{3,0}=\dim\h{M}^1_{g,3}+\dim\PP GL(5)=51.$$

 \vni  We now suppose that a smooth linearly normal curve $C\subset\PP^4$ of degree $d=12$ and genus $g=13$ lies on a quartic surface $S\subset\PP^4$, which is one of the following:
 \begin{itemize}
 \item[(1)] An external projection of a quartic scroll or a Veronese surface in $\PP^5$,
 \item[(2)] a del Pezzo  surface, possibly with finitely many isolated double points,
 \item[(3)] a cone over a elliptic quartic curve in $\PP^3\subset\PP^4$. 
 \end{itemize}
 
 \vni 
 We do not attempt to go through and analyze all the possible cases listed above, which could  be quite cumbersome.
 Instead, we choose a {\bf reasonable} quartic surface $S\subset\PP^4$ and get a numerical description of a smooth curves $C\subset S$ with $(d,g)=(12, 13)$.  We then proceed to compute the normal bundle $N_{C|\PP^4}$, which would eventually justify the existence of an extra component
 of $\HL{12,13,4}$ other than 
the component containing the family $\h{H}_{3,0}$ consisting of trigonal curves which we already obtained. 

\vni
Let $S\subset\PP^4$ be a smooth del Pezzo  which is the isomorphic image of $\PP^2_5 \lhook\joinrel\xrightarrow{|(3;1^5)|} S\subset\PP^4$. Setting $C\in (a;b_1,\cdots,b_5)$,
we have  $$\deg C =3a-\sum b_i=12, C^2=a^2-\sum b_i^2=2g-2-K_S\cdot C=36.$$
By Schwartz's inequality,  one has $(\sum b_i)^2\le 5(\sum b_i^2)$ and substituting 
$\sum b_i=3a-12$, $\sum b_i^2=a^2-36$ we obtain 
$$5\,({a}^{2}-36)- \left( 12-3\,a \right) ^{2}\ge 0 \Longleftrightarrow a=9, b_1=b_2=\cdots b_5=3, $$ and therefore
$$C\in L:=(9;3^5).$$
\vni
By Riemann-Roch on $S$, Serre duality and Kodaira's vanishing theorem, one may easily compute
$$h^0(S,L)=25.$$

\vni
Let  $\mathcal{H}^0_4\subset\mathcal{H}_{12,13,4}$ be the irreducible locus consisting of  curves lying on a smooth del Pezzo  surface. Recall that a smooth del Pezzo  in $\PP^4$ is a complete intersection of two quadric 
hypersurfaces and is completely determined by a pencil of quadrics in 
$\PP^4$; 
\begin{align}\label{del12}
\dim\h{H}^0_4&=\dim\mathbb{G}(1,\PP(H^0(\PP^4,\h{O}(2))))+\dim \PP(H^0(S,L))\\&=
\dim\mathbb{G}(1,14)+24=50> \lambda (12,13,4)+\dim\PP GL(5)=48\nonumber.
\end{align}
The strict inequality in  \eqref{del12}
suggests that  $\mathcal{H}^0_4$ (or its closure) could well be a component other than $\h{H}_{3,0}$. We now verify that the closure of $\h{H}^0_4$ is indeed a component
of dimension
$50$.

\vni
Since 
$$C\in (9;3^5)=3(3;1^5)=3(-K_S),$$ $C\subset S$ is a divisor cut out on $S$ by a cubic 
hypersurface and hence $C$ is a complete intersection of two quadrics and a cubic hypersurface. 
We also note that $$K_S+C\in |(-3;-1^5)+(9;3^5)|= |(6;2^5)|=|2(3;1^5)|,$$ and hence $K_C$ is cut out on $C$ by $\h{O}_S(2)$; $\h{O}_C(K_C)\cong\h{O}_S(2)_{|C}$. In other words, the hyperplane series of $C\subset S\subset\PP^4$ is semi-canonical and $$h^0(C,\h{O}_C(2))=h^0(C,\h{O}_C(K_C))=g=13.$$
We  compute  the dimension  of the tangent space $$T_C{\h{H}_4}\cong H^0(C,N_{C|\PP^4})$$ where $\h{H}_4$ is a component containing the irreducible family $\h{H}^0_4$.
\vni
Since  $C$ is a complete intersection, the normal bundle $N_{C|\PP^4}$ splits; $$N_{C|\PP^4}=\h{O}_C(2)\oplus\h{O}_C(2)\oplus\h{O}_C(3).$$
Noting that $\Oo_C(3)$ is non-special, we have
$$h^0(C,N_{C|\PP^4})=2h^0(C, \h{O}_C(2))+h^0(C,\h{O}_C(3))=50=\dim\h{H}^0_4$$
by \eqref{del12}. Hence the irreducible family  $\h{H}^0_4$ is dense in the component $\h{H}_4$. Since $\dim\h{H}^0_3=51>\dim\h{H}_4$, we have $$\h{H}_4\neq\h{H}_3$$
where $\h{H}_3$ is a component containing the 
irreducible locus $\h{H}^0_3$. The reducibility of $\HL{12,13,4}$ readily follows from this.\footnote{One may  try invoke lower semi-continuity of gonality to ensure that $\h{H}^0_4$ is not in the boundary of $\h{H}^0_3$. However without the computation $\dim H^0(C,N_{C|\PP^4})$, it is presumably possible that the two irreducible families  $\h{H}^0_3$ and $\h{H}^0_4$ may be contained in another component of dimension strictly bigger than 
$\dim\h{H}^0_3$. }

\vni
\item[(vi)] $r=3$, $g=r+9=12$: Let $C\subset\PP^3$ be a smooth curve with $(d,g)=(10,12)$. From the exact sequence $$0\rightarrow\h{I}_C(3)\rightarrow\h{O}_{\PP^3}(3)\rightarrow\h{O}_C(3)\rightarrow 0$$ and Riemann-Roch one computes $H^0(\PP^3,\h{I}_C(3))\neq 0$, hence $C$ lies on a possibly reducible cubic surface.  Indeed there exists a family $\h{I}_2$ consisting of  smooth curves of degree $10$ of type $(3,7)$ on smooth quadrics; $$\dim\h{I}_2=\dim \PP(H^0(\PP^3, \h{O}(2)))+\dim\PP(H^0(\PP^1\times\PP^1\h{O}_{\PP^1\times\PP^1}(3,7)))=40.$$  There also exists a family $\h{I}_3$ consisting of curves on  
non-singular cubics, e.g. a general member in the linear system $(9;3^5,2)$ on $\PP^2_6 \lhook\joinrel\xrightarrow{(3;1^6)} \PP^3$; 
$$\dim\h{I}_3=\dim\PP(H^0(\mathbb{P}^3, \mathcal{O}(3)))+\dim|(9;3^5,2)|=40=4\cdot 10.$$ 
Since $\dim\h{I}_2=\dim\h{I}_3$, one is not in the closure of the other and hence these two families form
two distinct irreducible components of $\HL{g-2,g,3}$ of the same expected dimension; cf. \cite[Theorem 4.1 and Corollary 4.2]{speciality} for other related results. 
\end{itemize}
\end{proof}
\begin{rmk}  It is worthwhile to note that  $\h{H}_{g+r-5,g,r}=\HL{g+r-5,g,r}$ for triples $(d,g,r)$ in Proposition \ref{g=r+9}. This follows from the fact that extremal, nearly extremal  or curves with $g=\pi_1(d,r)$ are always linearly normal; note that
$$\pi(d,r+1)<\pi(d,r+1)+\left[\frac{d}{r}\right]\le \pi_1(d,r)<\pi(d,r).$$
\end{rmk}

\begin{rmk} The following is the current status of our knowledge about the irreducibility of  $\HL{d,g,r}$ with low index of speciality $\al=g-d+r$; cf. \cite{JPAA, lengthy, speciality}.
\begin{itemize}
\item[(i)] If $\al=0$, we have $\rho(d,g,r)=g>0$ and there is no reducible $\HL{d,g,r}$. Hence the Modified Assertion of 
Severi is trivially true in this case. 
\item[(ii)] If $\al=1$, $\rho (d,g,r)=g-(r+1)$.  $\HL{d,g,r}\neq\emptyset$ if and only if $\rho\ge 0$. Moreover, every non-empty $\HL{d,g,r}$ is irreducible and therefore Modified Assertion of Severi also holds in this case.
\item[(iii)] If $\al =2$, $\rho (d,g,r)=g-2(r+1)$. One has $\HL{d,g,r}\neq\emptyset$ if and only if $g\ge r+3$
and every non-empty $\HL{d,g,r}$ is irreducible, which follows from the irreducibility of Hurwitz scheme. Hence the  Modified Assertion of Severi holds everywhere beyond the Brill-Noether range. 
\item[(iv)]  If $\al =3$, $\rho (d,g,r)=g-3(r+1)$. It is known that if $g\ge 2r+3$,  $\HL{d,g,r}$ is irreducible, which basically follows from the irreducibility of Severi variety of plane nodal curves; cf. \cite{JPAA}. Therefore the Modified Assertion of Severi holds far beyond the Brill-Noether range. The lower bound $g\ge 2r+3$ for the irreducibility of $\HL{g+r-3,g,r}$ is sharp for every $r\ge 5$. Indeed  for all the values $g$ in the range $r+5\le g\le 2r+2$ such that $r\ge 5$,  $\HL{d,g,r}$ is reducible. 
Moreover all the triples $(d,g,r)=(g+r-3,g,r), r\ge 3$ for which  $\HL{d,g,r}$ is irreducible have been idetified; cf. \cite[Theorem 1.1, Remark 3.1, Theorem 3.9, Corollary 3.10]{lengthy} and references therein. 

\item[(v)] For $\al =4$, our overall knowledge on the irreducibility of $\HL{d,g,r}$ drastically decreases 
compared with the Hilbert schemes having smaller index of speciality. Unless the genus $g$ is near to
the minimal possible value with respect to $r$, virtually nothing is known about the irreducibility of the $\HL{d,g,r}$
either inside or outside the Brill Noether range. 
\item[(vi)] In the next section, we proceed one step  further to determine the irreducibility of $\HL{d,g,r}$ with $\al =5$ when the genus $g$ is very near to minimal possible value with respect to $r$.
\end{itemize}
\end{rmk}

\section{Irreducibility of $\HL{2r+5,r+10,r}$} 

In Theorem \ref{sub6x}, we saw that if $g=r+10$,  $\HL{g+r-5,g,r}\neq\emptyset$ if and only if $3\le r\le 8$. However when $g=r+9$,  $\HL{g+r-5,g,r}\neq\emptyset$ for every $r\ge 3$. 
A similar phenomenon of this kind occurs for low index of speciality $3\le \al\le 4$; cf. \cite[Proposition 3.3]{lengthy} and \cite[Theorem 2.4]{speciality}. In this section we determine the irreducibility of this peculiar Hilbert scheme $\HL{2r+5,r+10,r}$ for every $r\neq 4$. We start to prove the following.

\begin{thm} \label{r+10}$\HL{2r+5,r+10,r}$ is irreducible if $r=6,7$ and reducible if $r=8$.
\end{thm}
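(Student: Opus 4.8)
The plan is to study not $C\subset\PP^r$ directly but its residual curve in the fixed space $\PP^4$, since the index of speciality is $\al=5$. For a general $(C,\h{D})$ in a component of $\HL{2r+5,r+10,r}$ I put $\h{E}:=\h{D}^\vee=g^4_{13}$; as the irreducibility of $\HL{}$ is equivalent to that of $\w{\h{G}}_{\h{L}}$, and residuation $\h{D}\leftrightarrow\h{E}$ is bijective, it is enough to count the irreducible families of complete $g^4_{13}$'s whose residual $\h{E}^\vee=\h{D}$ is very ample. First I would rule out $\h{E}$ being compounded: otherwise Lemma \ref{easylemma1} (with $\al=5$) would force $\h{D}$ not to be very ample, contradicting that $\h{D}$ is the hyperplane series of a smooth curve. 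Hence $\h{E}$ is birationally very ample and maps $C$ onto a curve $C^\vee\subset\PP^4$ of degree $13$ and geometric genus $g=r+10$.

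Next I would locate the surface containing $C^\vee$. A Castelnuovo computation gives $\pi(13,4)=18$, shows that no smooth curve of degree $13$ and genus $16$ or $17$ exists in $\PP^4$ (Proposition \ref{nearlyextremal} produces no admissible $(a,b)$), and shows that degree $13$ curves lying on a quartic surface have arithmetic genus below $16$. Hence $C^\vee$ is a nodal curve of arithmetic genus $\pi(13,4)=18$ on a cubic scroll $S\cong\mathbb{F}_e$ of minimal degree, carrying $\delta=18-g=8-r$ nodes. By the formulas \eqref{scrolldegree}--\eqref{scrollgenus} the only classes on $S$ of degree $13$ and arithmetic genus $18$ are $4C_0+9f$ and $5C_0+8f$; the ruling cuts a $g^1_4$ on the first and a $g^1_5$ on the second, so the two families lie over the $4$-gonal and the $5$-gonal loci. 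By Remark \ref{Severi}(ii) the equigeneric Severi variety of geometric-genus-$g$ members of each class is irreducible, of dimension $\dim|aC_0+bf|-\delta$ computed from \eqref{lsdimension}; pushing this through residuation yields two irreducible families $\Vv_4$ and $\Vv_5$ of curves in $\PP^r$.

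The whole point is then to decide, for each $r$, whether $\Vv_4$ and $\Vv_5$ are distinct components. When $r=8$ one has $\delta=0$, so $C^\vee$ is a smooth extremal curve; Proposition \ref{nearlyextremal} applied in $\PP^4$ (valid since $4\ne 3,5$ and $g=\pi(13,4)$) produces one component of $\h{H}_{13,18,4}$ for each class, and after verifying that both residual series $\h{D}$ are very ample (via Lemma \ref{kveryample}) these give two distinct components of $\HL{21,18,8}$, whose general members have different gonality $4$ and $5$; hence $\HL{21,18,8}$ is reducible. When $r=6,7$ one has $\delta=8-r>0$ and $C^\vee$ is genuinely singular; here I would show that the more special family $\Vv_4$ (over the $4$-gonal locus) lies in the closure of $\Vv_5$. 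Indeed the general member of $\HL{2r+5,r+10,r}$ should be $5$-gonal — matching the del Pezzo model $(8;3,2^{8-r})$ on $\PP^2_{9-r}$ used for existence in Theorem \ref{sub6x} — and since $4$-gonal curves are specializations of $5$-gonal ones (lower semicontinuity of gonality), a dimension comparison establishing $\dim\Vv_4\le\dim\Vv_5$ would force $\Vv_4\subset\overline{\Vv_5}$, giving a single component and the irreducibility of $\HL{2r+5,r+10,r}$ for $r=6,7$.

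The main obstacle is exactly this last dichotomy. A naive count shows both Severi families have dimension exceeding the lower bound $\lambda(2r+5,r+10,r)$ of Proposition \ref{facts}, so irreducibility for $r=6,7$ cannot come from dimension alone: one must prove that in the nodal range the $4$-gonal family degenerates into the $5$-gonal one, controlling how the special divisor cut on $C$ by the directrix $C_0$ interacts with a fibre of the ruling, whereas for $r=8$ the two smooth families remain rigidly separate because the scroll class is locally constant along smooth curves on $S$. Establishing this absorption — together with the very-ampleness of $\h{D}$ for the surviving family through the multiplication-of-pencils estimate of Lemma \ref{kveryample}, whose numerical hypothesis becomes tight precisely when $g\le 17$ — is where the real work lies.
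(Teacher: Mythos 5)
Your setup is the paper's: pass to the residual $\h{E}=\h{D}^\vee=g^4_{13}$, exclude compounded $\h{E}$ by Lemma \ref{easylemma1}, place the image curve on a cubic surface in $\PP^4$ with $p_a=18$ and $\delta=8-r$ nodes, identify the two classes $4C_0+9f$ and $5C_0+8f$ (the paper's $\h{N}_2=|4H+L|$ and $\h{N}_1=|5H-2L|$), and for $r=8$ separate two components by gonality. That part is sound. But your treatment of $r=6,7$ --- the heart of the theorem --- rests on a mechanism that is both unjustified and not the one that actually operates. You propose that the $4$-gonal family $\Vv_4$ is absorbed into $\overline{\Vv_5}$ because $\dim\Vv_4\le\dim\Vv_5$ and gonality is lower semicontinuous. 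A dimension inequality never forces containment in a closure: in this very paper, families of strictly smaller dimension sit as distinct components (Proposition \ref{g=r+9}(v), and the $r=8$ case here, where $\dim\h{G}_1=32<\dim\h{G}_2=33$ yet $\h{G}_1\not\subset\overline{\h{G}_2}$). What actually happens for $r=6,7$ is that $\Vv_4$ does not exist as a family of embedded smooth curves at all: blowing up the $\delta=8-r$ nodes of a general member of $\Sigma_{\h{N}_2,\delta}$, the residual system is $(4;3,1^{8-r})$ on $\PP^2_{9-r}$, which contracts the $(-1)$-curve $l-e-e_1$ through a node while meeting $\w{C}$ in two points; hence $\h{D}=\h{E}^\vee$ maps $\w{C}$ onto a singular curve and is not very ample. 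The $4$-gonal Severi family therefore contributes no component of $\HL{2r+5,r+10,r}$ --- there is nothing to ``absorb,'' and no degeneration argument can even be formulated, because its members never lie in the Hilbert scheme in question. Irreducibility then follows because only $\Sigma_{\h{N}_1,\delta}^\vee$ (whose residual system $(3;1^{9-r})$ is very ample) survives.

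There is a second genuine omission: you never treat the case where the residual curve lies on the singular surface of minimal degree, the cone over a rational normal curve in $\PP^3\subset\PP^4$. For $r=8$ this is harmless, since smooth curves on the cone are specializations of curves on smooth scrolls (Proposition \ref{specialization}); but for $r=6,7$ the residual curve on the cone is singular, that proposition does not apply, and the paper needs a separate argument (its part (B)): on the desingularization $\mathbb{F}_3$ one blows up the nodes, shows the restriction map is an isomorphism via a Castelnuovo bound, and then shows the fibres through the nodes are contracted, so again the residual series is not very ample. Without this, components supported on the cone are not excluded. Finally, a smaller point: for $r=8$ you invoke Lemma \ref{kveryample} to verify very ampleness of $\h{D}$, but that lemma concerns \emph{general} $k$-gonal curves, while the curves arising here fill families of dimension $32$ and $33$ inside gonality loci of dimension $39$ and $41$, so they need not be general; the correct verification is the paper's geometric one, namely that the restriction map $H^0(S,\h{O}_S(D))\rightarrow H^0(\ce,\h{O}_S(D)\otimes\h{O}_{\ce})$ is an isomorphism and that $(3;1)$, $(4;3)$ are very ample on $S\cong\PP^2_1$.
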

\begin{proof} Given a  component $\h{G}\subset\w{\h{G}}_\h{L}\subset\h{G}^r_{g+r-5}$ whose general element $(p,\mathcal{D})\in\Gg$ corresponds to a very ample and complete linear series $\mathcal{D}$ on the curve $C=\xi^{-1}(p)$,   set $\h{E}=\h{D}^\vee=g^4_{13}$.  Note that $\h{E}=g^4_{13}$ is not compounded; otherwise we may use Lemma \ref{easylemma1} to conclude that $\h{D}$ is not very ample.  We also note that $\h{E}$ is base-point-free since $\pi(12,4)=15<r+10=g$.
In the range $6\le r\le 8$, $$\pi_1(13,4)=15<g=r+10\le \pi (13,4)=18, $$
hence  a curve of (geometric) genus $g=r+10$ and degree $13$ in $\PP^4$ lies on a surface of minimal degree in $\PP^4$; cf. \cite[Theorem 3.15, page 99]{H1}.  Let $\ce\subset\PP^4$ be the image curve of the morphism induced by $\h{E}$ and let $S\subset\PP^4$ be the  cubic surface containing $\ce$.

\vni
{\bf (A)} We first assume that $S$ is a non-singular cubic rational normal surface scroll. Recall that there are obvious isomorphisms 
$$S\cong\mathbb{F}_1\cong\PP^2_1 $$ together with  natural correspondences between generators of Picard groups;

\begin{align}\label{picid}
& \textrm{Pic}S& &\cong &  &\textrm{Pic}(\mathbb{F}_1)& &\cong& &\textrm{Pic}(\PP^2_1)&\nonumber\\
&\hskip 10pt\uin& &{}&  &\hskip 10pt\uin& &{}& &\hskip 10pt\uin&\nonumber\\
&(H,L)&  & \longleftrightarrow& &(e+2f,f)&  &\longleftrightarrow &  &(2l-\tilde{e}, l-\tilde{e}),& 
\end{align}

\vni
$e\in\textrm{Pic}(\mathbb{F}_1)$ is the class of the section with minimal self intersection $e^2=-1$, $f$ the class of fibre of the Hirzebruch surface $\mathbb{F}_1\rightarrow \PP^1$; $l$ the class of a line in $\PP^2_1$, $\w{e}$ the class of the exceptional divisor on $\PP^2_1$.
By abusing notation we shall make no distinction between $e\in\textrm{Pic} (\mathbb{F}_1)$ and $\w{e}\in\textrm{Pic} (\PP^2_1)$ and  use the same symbol $e$. 

\vni
Let $C_\h{E}\in |aH+bL|$ and we solve equations (\ref{scrolldegree}) and  (\ref{scrollgenus}); $n=3, d=13, ~p_a(C_\h{E})=r+10$. The solutions of integer pairs $(a,b)\in\mathbb{N}\times\mathbb{Z}$ exist only when $r=8$; $(a,b)\in\{(5,-2), (4,1)\}$. 
Hence $C_\h{E}$ is smooth only if $r=8$ and is singular if $r=6,7$. Set $\Nn_{1}:=\h{O}_S(5H-2L)$
and  $\Nn_{2}:=\h{O}_S(4H+L)$.

\vni
{\bf (A-1):}  We first deal with the case $r=8$.
 By the  correspondence \eqref{picid}, we have
\begin{itemize}
\item[(i)] 
$C_\h{E}\in |\Nn_{1}|=|5H-2L|=|5(e+2f)-2f|=|8f+5e|=|8l-3e|=(8;3)$

\noindent
$|D|:=|K_S+C_\h{E}-H|=|2H-L|=|2(e+2f)-f|=|3l-e|=(3;1)$

\noindent
$\h{E}^\vee=|K_{C_\h{E}}-\h{E}|=(3;1)_{|{C_{\h{E}}}}$
\vni

\item[(ii)]
$C_\h{E}\in|\Nn_{2}|=|4H+L|=|4(e+2f)+f|=|9f+4e|=|9l-5e|=(9;5)$

\noindent
$|D|:=|K_S+C_\h{E}-H|=|H+2L|=|e+4f|=|4l-3e|=(4;3)$

\noindent
$\h{E}^\vee=|K_{C_\h{E}}-\h{E}|=(4;3)_{|{C_{\h{E}}}}$
\end{itemize}
\vni
Let $\rho_{\Nn_{i}}: H^0(S,\h{O}_S(D))\rightarrow H^0(\ce, \h{O}_S(D)\otimes\h{O}_{\ce})$ be the restriction map.
We have $$\ker\rho_{\Nn_i}\cong H^0(S,\Nn_i^{-1}\otimes\h{O}_S(D))\cong H^0(S,\h{O}_S(-3H+L))=0,$$  $H^1(S, \h{O}_S(-3H+L))=0$ by 
Kodaira vanishing theorem since  $|3H-L|$ is (very) ample. Hence $\rho_{\Nn_i}$ ($i=1,2$) is an isomorphism and therefore in both cases (i) and (ii), the linear system $\h{E}^\vee$ on $C_\h{E}$ is very ample since the corresponding linear system $(3;1)$ or $(4;3)$ on $S\cong\PP^2_1$ is very ample. One may interpret our current situation as follows. 

\vni 
{\bf Observation 1:}
Given
a very ample $(\h{D}=g^r_d,\w{C})\in\h{G}\subset \w{\h{G}}_{\h{L}}\subset\h{G}^r_d$, we take the residual series $\h{D}^\vee=\h{E}$ on $\w{C}$. If $r=8$, $\h{E}$ induces a morphism onto a smooth curve $\ce\subset\PP^4$ sitting on the surface $S$ of minimal degree belonging to one of the  linear systems $$\h{N}_1=|5H-2L| \textrm{ or }\h{N}_2=|4H+L|.$$ Therefore $\ce$ may be regarded as an element of the (set theoretic) union of Severi varieties  $\Sigma_{\h{N}_1,\delta}\cup\Sigma_{\h{N}_2,\delta}\subset\textrm{Div}S$ where $\delta =0$. 

\vni
One may  do the same thing for the cases $r=6,7$. Upon fixing a rational normal surface scroll $S\subset\PP^4$ once and for all (up to {$\textrm{Aut}(S)$}) and putting $\delta =8-r$, we have a natural generically injective correspondence 
\begin{align*}&\w{\h{G}}_{\h{L}}\hskip 24pt\stackrel{\tau}{\dashrightarrow}& &\w{\h{G}}^\vee_{\h{L}}\hskip 24pt\stackrel{\iota}{\dashrightarrow}& &{\tilde \Sigma}_{\h{N}_1,\delta}\cup{\tilde\Sigma}_{\h{N}_2,\delta}\subset\textrm{Div}{S}/\textrm{Aut}(S)&\\
&\uin&&\uin&&\hskip 24pt\uin&\\
&\h{D}\hskip 24pt\longmapsto& &\h{D}^\vee\hskip 24pt\longmapsto& &\hskip 20pt C_{\h{D}^\vee}=\ce&
\end{align*}
where ${\tilde\Sigma}_{\h{N}_i,\delta}$ is the image of $\Sigma_{\h{N}_i,\delta}$ under the 
projection $\textrm{Div}S\rightarrow \textrm{Div}S/\textrm{Aut}(S)$.
Let  $\h{U}\subset{\tilde\Sigma}_{\h{N}_1,\delta}\cup{\tilde\Sigma}_{{\h{N}_2},\delta}$ be an open set on which the inverse of the rational map $\iota\circ\tau$ is defined.  

\vni
In the case $r=8$, as we saw in (A-1),  for a general  $\ce\in\h{N}_i=\Sigma_{\h{N}_i,0}$, $|K_S+\ce-\h{O}_S(1)|_{\ce}$ is very ample, thus we may conclude
$$\h{U}\cap {\tilde\Sigma}_{\h{N}_i,\delta}\neq\emptyset \textrm{  for }i=1,2. $$
\vni
Hence there are two irreducible families $\h{G}_i \subset \w{\h{G}}_\h{L}$ corresponding to ${\tilde\Sigma}_{\h{N}_i,\delta}$ of dimensions
\footnote {In general $\textrm{Aut}(\mathbb{F}_n)$ is isomorphic to the semi-direct product 
$\textrm{Sym}^n\mathbb{C}^2\ltimes GL(2)/\mu_n$ where $\mu_n$ is the group of $n$-th roots of unity in 
the group of scalar matrices; cf. \cite[page 12]{Blanc}.
Hence $\textrm{Aut}(\mathbb{F}_1)\cong \mathbb{C}^2\ltimes GL(2)$.  Alternatively, by recalling that  rational normal surface scrolls in $\PP^s$ move in an irreducible family $\h{S}_s$ of dimension   $(s+3)(s-1)-3$,
$$\dim\h{G}_i=\dim\h{N}_i+\dim\h{S}_4-\dim\PP(GL(5))=\dim\h{N}_i-6.$$}
$$\dim\h{G}_i=\dim\h{N}_i/\textrm{Aut}(S)=\dim\h{N}_i-\dim(\mathbb{C}^2\ltimes GL(2))=\dim\h{N}_i-6.$$
Therefore it follows that $\w{\h{G}}_{\h{L}}$ which $\HL{21,18.8}$ sits  over as a $\Aut(\PP^8)$-bundle is reducible with two components $\h{G}_i$
corresponding to two irreducible Severi varieitis $\Sigma_{\h{N}_i,0}$, $i=1,2$. We remark that 
one of the two irreducible families $\h{G}_i$ is not in the boundary of the other by semi-continuity of gonality;$$\dim\h{G}_2=33>\dim\h{G}_1=32 \mbox{ whereas } \textrm{gon}(C_1)>\textrm{gon}(C_2) \mbox{ for } C_i\in\mathcal{N}_i$$
 
\vni
{\bf Observation 2:}
We remark that the original smooth curve $C\subset\PP^8$ with $(d,g)=(21,18)$ is neither extremal nor nearly extremal; $\pi_1(d,r)=g$. Therefore it might be quite complicated to carry out comprehensive analysis of curves in $\PP^8$ lying on a surface of degree eight or seven.
However the residual curve $\ce\subset\PP^4$ is an extremal curve lying on a surface of minimal degree, 
from which we were able to deduce the reducibility  $\HL{21,18.8}$ from the reducibility of the family of residual curves which are members of two typical Severi varieties $\Sigma_{\h{N}_i,0}  (i=1,2)$ on the rational surface $S$. 



\vni 
{\bf (A-2):}
In the same vein, we proceed  to handle the cases $6\le r\le 7$. In this case, the residual (singular) curve $\ce\subset\PP^4$ is not an extremal curve but  a nearly extremal curve; $$\pi_1(13,4)=15\lneq g=r+10\lneq\pi(13,4)=18.$$
By Remark \ref{Severi} (i), we may assume  that $\ce$ is a nodal curve lying on a surface of minimal degree $S\cong\mathbb{F}_1$ with $\delta =8-r$ nodes as its only singularities. 
Again,  the residual curve $\ce$ -- as a divisor of $S\cong\mathbb{F}_1$ -- may be regarded as an element in the union of Severi varieties  
$$\Sigma_{\h{N}_1,8-r}\cup\Sigma_{\h{N}_2,8-r}.$$

\vni
We now take a curve $C\subset S\subset\PP^4$ corresponding to a general element  of $\Sigma_{\h{N}_i,8-r}$.
We blow up $\mathbb{F}_1\cong\PP^2_1$ at the $(8-r)$ nodal points to get the surface $\PP^2_{9-r}$.  We may assume that the singularities of $C$ are not in the exceptional divisor $e$ of $\PP^2_1$. Let $\w{C}$ be the proper (strict) transformation of $C$ under the blow up $\mathbb{P}^2_{9-r}\stackrel{\pi}{\rightarrow}\PP^1_1\cong\mathbb{F}_1\cong S$. Let $e$, $e_1, \cdots e_{8-r}$ be the exceptional divisors of the blown up surface $\PP^2_{9-r}$ and set $\w{H}=\pi^*\h{O}_S(1)$.
We will determine the class of $\w{C}$ and the linear system $|K_{\PP^2_{9-r}}+\w{C}-\w{H}|$ which may cut $\Ee^\vee$ on $\w{C}$.  Our goal is to check if $$\h{D}=|K_{\PP^2_{9-r}}+\w{C}-\w{H}|_{\w{C}}$$ is very ample.
In other words, 
we would like to examine if 
the series $\h{D}$ on smooth $\w{C}$ -- which is the residual series of the series cut out by hyperplanes in $\PP^4$ --  is very ample.
If this is the case, we may conclude that  either  one of the Severi varieties $\Sigma_{\h{N}_i,{8-r}}$ --  which is irreducible by 
a work of Tyomkin \cite{tyomkin} -- corresponds to an irreducible component of $\w{\h{G}}_\h{L}\subset\h{G}^r_{2r+5}$. 
We denote by 
\begin{equation}\label{severi}
\Sigma_{\h{N}_i,\delta}^\vee:=\{|K_{\PP^2_{9-r}}+\w{C}-\w{H}|_{|{\w{C}}}:C\in\Sigma_{\h{N}_i,\delta}, \,\w{H}=\pi^*\h{O}_S(1),\, \PP^2_{9-r}\stackrel{\pi}\rightarrow S\}\end{equation} the family consisting of such $\h{D}$'s.

\begin{itemize}
\item[(iii)]  $C\in\Sigma_{\h{N}_1,{8-r}}\subset\h{N}_1=|5H-2L|$: With a minor modification of the previous computation in (A-1)-(i), we have $$\w{C}\in (8;3,2^{8-r})$$ and may deduce that 
\begin{align*}
|D|:&=|K_{\PP^2_{9-r}}+\w{C}-\w{H}|\\&=|(-3;-1^{9-r})+(8;3,2^{8-r})-(2;1, 0^{8-r})|=(3;1^{9-r}),
\end{align*}
which is very ample inducing an embedding $\phi: \PP^2_{9-r}\stackrel{(3;1^{9-r})}{\longrightarrow}\PP^r$ as well as an isomorphism $\w{C}\cong \phi({\w{C}})\subset\PP^r$;
$$\deg \phi({\w{C}})= \w{C}\cdot (K_{\PP^2_{9-r}}+\w{C}-\w{H})=2r+5, ~g(\phi(\w{C}))=g(\w{C})=r+10.$$

\vni
 Borrowing the notation used in the proof of Theorem \ref{sub6x} (\ref{octic}), we note that the linear system $$(8;3,2^{8-r})=\h{L}(0,r)$$
 to which $\w{C}$ belongs, is a special case of the one we encountered there.
Furthermore, one may check  that the restriction map
$$\rho_{\h{L}(0,r)}: H^0(\PP^2_{9-r},\h{O}_{\PP^2_{9-r}}(D))\rightarrow H^0(\w{C}, \h{O}_{\PP^2_{9-r}}(D)\otimes\h{O}_{\w{C}})$$
is an isomorphism;
\begin{align*}
\ker\rho_{\h{L}(0,r)}&=H^0(\PP^2_{9-r},\h{O}_{\PP^2_{9-r}}(D)\otimes{\h{L}(0,r)}^{-1})\\&=
H^0(\PP^2_{9-r},\h{O}_{\PP^2_{9-r}}(K_{\PP^2_{9-r}}-\w{H}))\\&=H^0(\PP^2_{9-r},\h{O}_{\PP^2_{9-r}}(-(5;2,1^{8-r})))=0
\end{align*}
and $H^1(\PP^2_{9-r},\h{O}_{\PP^2_{9-r}}(D)\otimes{\h{L}(0,r)}^{-1})=0$ by Kodaira vanishing theorem.
Therefore a general element of  $\Sigma_{\h{N}_1,\delta}^\vee$ is very ample and  $\Sigma_{\h{N}_1,\delta}^\vee$ corresponds to 
a component of $\w{\h{G}}_\h{L}\subset\h{G}^r_{2r+5}$.

\item[(iv)] $C\in\Sigma_{\h{N}_2,{8-r}}\subset\h{N}_2=|4H-L|$:  In the same way, we get $$\w{C}\in (9,5,2^{8-r})$$ and 
\begin{align*}
|D|:=|K_{\PP^2_{9-r}}+\w{C}-\w{H}|&=(-3;-1^{9-r})+(9;5,2^{8-r})-(2,1, 0^{8-r})\\&=(4;3,1^{8-r}),
\end{align*}
which is {\bf not} very ample unless $r=8$ by \cite{sandra}. On the other hand, the corresponding restriction 
map is still an isomorphism. 
Finally we remark that the $(-1)$ curve $l-e-e_1=(1;1,1,0^{7-r})$ is contracted by the morphism induced by 
$(4;3,1^{8-r})$, whereas $\w{C}\cdot (l-e-e_1)=2$ hence producing a singularity on the image curve. Therefore it follows that a general element of  $\Sigma_{\h{N}_2,\delta}^\vee$ is {\bf not }very ample and  $\Sigma_{\h{N}_2,\delta}^\vee$ {\bf does not} correspond to 
a component of $\w{\h{G}}_\h{L}\subset\h{G}^r_{2r+5}$.
\end{itemize}

\vni
{\bf (B)} We assume that $\ce$ lies on a rational normal cone $S\subset\PP^4$.  We borrow  the notation from Remark \ref{minimal} (iii) and take  $n=3$, $d=13$ there.
Let $\widetilde{C}_{\Ee}$ be the proper transformation
of $\ce$ under the minimal desingularization $\FF_3\to S\subset\PP^4$ given by $|C_0+3f|$.  By \eqref{conevertex} and \eqref{cone}, we have 

\[(k,m,p_a(\w{C}_\Ee))=
\begin{cases} (4,1,18)\\
(3,4,15)\\
(2,7,9)
\end{cases}
\] 
Note that the second  and third triples are not possible since $p_a(\w{C}{_\Ee})<g=r+10=g(\w{C}_{\Ee})$, a contradiction. Hence we only have 
$(k,m,p_a(\w{C}_\Ee))=(4,1,18)$.
\vni
{\bf (B-1):} If $r=8$, $\ce\subset S$ is smooth: $p_a(\w{C}{_\Ee})=18=g$.
By Propostion \ref{specialization}, $\ce$ is a specialization of curves lying on a smooth rational normal scroll.
\vni {\bf (B-2):} If $r=7$, $\w{C}{_\Ee}$ is singular with a double point $p_0\in\w{C}_\Ee$. Setting 
$\widetilde{C}_{\Ee}\in|aC_0+bf|$, we have $\widetilde{C}_{\Ee}\cdot (C_0+3f)=b=13, a=k=4$ and $\widetilde{C}_{\Ee}\in
|4C_0+13f|$.
\vni
Choose $f_0$, the fibre containing the unique singular point $p_0\in\w{C}_\Ee$.
Note that $(k,m,p_a(\w{C}_\Ee))=(4,1,18)$, thus $\ce$ is smooth at the vertex of the cone $S\subset\PP^4$ and we may assume $p_0\notin C_0$. We blow up $\mathbb{F}_3$ at $p_0\in\widetilde{C}_{\Ee}$ and let $e$ be the exceptional divisor of the blow up $\mathbb{F}_{3,1}\stackrel{\pi}{\rightarrow}\mathbb{F}_3$. Let $\tilde{f}_0$ ($\widehat{C}$ resp.) be the proper transformation of $f_0$ ($\widetilde{C}_{\Ee}$ resp.). By abusing notation, we denote 
$\pi^*(C_0) ~~\&~~ \pi^*(f)$ by $C_0 ~~\&~~f$. We have
$$\widehat{C}\in|4C_0+13f-2e|$$ and consider
\begin{align*}
\h{M}&:=|\widehat{C}+K_{\mathbb{F}_{3,1}}-(C_0+3f)|\\&=|(4C_0+13f-2e)+(-2C_0-5f+e)-(C_0+3f)|\\&=|C_0+5f-e|.
\end{align*}
 Since 
$\h{O}_{\mathbb{F}_{3,1}}(\h{M}-\widehat{C})=\h{O}_{\mathbb{F}_{3,1}}(-(3C_0+8f-e))$
and $f\cdot (\h{M}-\widehat{C})<0$, we see that $h^0({\mathbb{F}_{3,1}},\h{O}(\h{M}-\widehat{C}))=0$ implying the injectivity of  the restriction map 
$$\rho: H^0({\mathbb{F}_{3,1}},\h{O}(\h{M}))\longrightarrow H^0(\widehat{C},\h{O}(\h{M})\otimes\h{O}_{\widehat{C}}).$$
Suppose $\rho$ is not surjective, i.e. assume
$$\mathrm{Im}(\rho)\subsetneq H^0(\widehat{C},\h{M}\otimes\h{O}_{\widehat{C}}).$$
By the projection formula, $\pi_*\pi^*\h{O}_{\mathbb{F}_3}(h+5f)=\h{O}_{\mathbb{F}_3}(h+5f)$
 \begin{align*}
h^0(\mathbb{F}_{3,1},\pi^*\h{O}_{\mathbb{F}_3}(h+5f))&=h^0(\mathbb{F}_3, \pi_*\pi^*\h{O}_{\mathbb{F}_3}(h+5f)))\\&=h^0(\mathbb{F}_{3},\h{O}(h+5f))=9
\end{align*}  and hence
\begin{align*}h^0(\mathbb{F}_{3,1},\h{O}(\h{M}))&=h^0(\mathbb{F}_{3,1},\h{O}(h+5f-e))\\&=h^0(\mathbb{F}_{3},\h{O}(h+5f))-1=8.
\end{align*}
Then by the injectivity and the non-surjectivity of $\rho$, 
\begin{align*}s:=\dim\PP(H^0(\widehat{C},\h{O}(\h{M})\otimes\h{O}_{\widehat{C}}))&>\dim\PP(H^0(\mathbb{F}_{3,1},\h{O}(\h{M}))\\&=\dim\PP(\mathrm{Im}(\rho))=7.
\end{align*}
Since $|\mathrm{Im}(\rho)|\subsetneq \PP(H^0(\widehat{C},\h{O}(\h{M})\otimes\h{O}_{\widehat{C}}))$ induces a morphism birational onto its image, the complete linear system $\PP(H^0(\widehat{C},\h{O}(\h{M})\otimes\h{O}_{\widehat{C}}))$ is still birationally very ample, which contradicts the Castelnuovo genus bound for a curve of degree $\widehat{C}\cdot\h{M}=(4h+13f-2e)\cdot (h+5f-e)=19$ in $\PP^{s\ge 8}$; $\pi(19,8)=15<g=16$. Therefore we have 
$$\mathrm{Im}(\rho)= H^0(\widehat{C},\h{O}(\h{M})\otimes\h{O}_{\widehat{C}})$$
and the restriction map $\rho$ is an isomorphism. 
Note that 
$$\tilde{f}_0^2=-1, \tilde{f}_0\cdot e=1,  C_0\cdot\tilde{f}_0=1,\widehat{C}\cdot \tilde{f}_0=2, 
\h{M}\cdot\tilde{f}_0=(C_0+5f-e)\cdot \tilde{f}_0=0.$$
Hence the morphism $\psi$ induced by  $\h{M}=|\widehat{C}+K_{\mathbb{F}_{3,1}}-(h+3f)|$ contracts the $(-1)$ curve $\tilde{f}_0$ and the image $\psi (\widehat{C})\subset\PP^7$ acquires a singularity since $\widehat{C}\cdot \tilde{f}_0=2$. Recall that $\Ee=\h{O}_{\widehat{C}}(1)$. Hence 
$$\PP(H^0(\widehat{C},\h{O}(\h{M})\otimes\h{O}_{\widehat{C}}))=|\Ee^\vee|=|\Dd|$$ is not 
very ample, a contradiction.

\vni{\bf (B-3):} If $r=6$, $\w{C}{_\Ee}\subset\FF_3$ is singular. We adopt the same  strategy (as for the case $r=7$) to show 
that a general element in the Severi variety of curves of geometric genus $g=16$ on $\mathbb{F}_3$ in  $\h{L}=|4h+13f|$ does not have very ample $|K_{\widehat{C}}(-1)|$, where $\widehat{C}$ is the normalization of $\w{C}_{\Ee}\subset\mathbb{F}_3$. Since being very ample is an open condition, we may assume that $\w{C}_{\Ee}\subset\FF_3$ has two nodes which lie outside the section $C_0\subset\FF_3$ of minimal self intersection.  Therefore all the argument remain parallel to the case $r=7$ -- except blowing up $\FF_3$ at two nodal points -- and we omit details.

\vni
{\bf Conclusion:}
\begin{enumerate}
\item[(1)] From (A-2), a general element of the family $\Sigma_{\h{N}_2,\delta}^\vee$ is not very ample if $\delta =8-r\ge 1$. Thus we have 
$$\h{U}\cap\Sigma_{\h{N}_2,\delta}=\emptyset \textrm{ and  }\h{U}\cap\Sigma_{\h{N}_1,\delta}\neq\emptyset$$
\item[(2)] From (B), the family of nodal curves lying on a rational normal cone $S$ does not have very ample residual hyperplane series and hence does 
contribute to a component of $\HL{2r+5,r+10,r}$ when $r=6,7$.
\item[(3)] The irreducibility of $\HL{2r+5,r+10,r}$ for $r=6,7$ readily follows.
\end{enumerate}
\end{proof}


 \vni
\begin{rmk}\label{easyveryample} (a) We saw in the course of the proof of Theorem \ref{r+10} that  the peculiar and seemingly artificial linear system  (\ref{octic}) on del Pezzo  surfaces which we used in the proof of Theorem \ref{sub6x}  arises naturally by analyzing nearly extremal curves induced by $\h{E}=\h{D}^\vee$ corresponding to a  general element  of a component of 
 $\HL{2r+5,r+10,r}$. Another such linear system (\ref{nonic}) in Theorem \ref{sub6x}) corresponds to a general element of a 
component of $\HL{2r+6,r+11,r}$ -- another  peculiar Hilbert scheme which is non-empty only for low $r$ -- and this $\HL{2r+6,r+11,r}$ will be studied in the next section. 

\noindent
(b) The generic example $\h{L}(0,r)$ (for $r=6,7,8$)  corresponding to $\Sigma_{\h{N}_1, 8-r}$  remain very ample for $3\le r\le 5$, . However, in the cases $3\le r\le 5$, it is possible that $\ce$ may lie on a surface in $\PP^4$ of degree $t\ge 4$.
Therefore one cannot determine the irreducibility of $\HL{2r+5, r+10, r}$ solely based on the irreducibility of the corresponding Severi varieties. 
\vni

\end{rmk}

\begin{rmk} 
\begin{itemize} 
\item[(i)] The irreducibility of $\HL{2r+5,r+5,r}$ for $r=5$ is known by a recent work of the author jointly with E. Ballico \cite[Theorem 4.1]{rmi}. 

\item[(ii)] For $r=4$, the irreducibility of $\HL{2r+5,r+10,r}$ has not been settled yet. However, we are able to show that $\HL{13,14,4}$ has a generically reduced component of minimal possible dimension as follows.
\end{itemize}
\end{rmk}
\begin{prop}\label{d13r4} $\Hh_{13,14,4}$ has a component of the expected dimension 
$$\Xx(13,14,4)=\lambda(13,14,4)+\dim\Aut(\PP^4)=52.$$
\end{prop}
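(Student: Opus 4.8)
The plan is to produce one explicit smooth, irreducible, non-degenerate and linearly normal curve $C\subset\PP^4$ of degree $13$ and genus $14$ for which $h^1(C,N_{C|\PP^4})=0$. Granting this, $[C]$ is a smooth point of $\Hh_{13,14,4}$ with $h^0(C,N_{C|\PP^4})=\chi(N_{C|\PP^4})=52$; since every component of $\Hh_{13,14,4}$ has dimension at least $\Xx(13,14,4)=52$ by Proposition \ref{facts}, the unique component through $[C]$ must then be generically reduced of dimension exactly $52$, which is minimal. For the curve I take a general member $C$ of the very ample linear system $\h{L}(0,4)=(8;3,2^4)$ on the quartic del Pezzo surface $S=\PP^2_5\lhook\joinrel\xrightarrow{|(3;1^5)|}\PP^4$; this is simply the $r=4$, $\beta=0$ instance of the system (\ref{octic}) already used to establish non-emptiness in Theorem \ref{sub6x}. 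By (\ref{degree}) and (\ref{genus}) one has $(\deg C,p_a(C))=(13,14)$, and since $(8;3,2^4)$ is very ample by \cite{sandra} and satisfies the hypotheses of Remark \ref{linearlynormalcriteria}(c) with $b_5=2\ge 1$ (and is not of the excluded form), a general such $C$ is smooth, irreducible, non-degenerate and linearly normal in $\PP^4$.

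The heart of the argument is the cohomology of $N_{C|\PP^4}$, which I would compute on $S$. Because $S$ is a complete intersection of two quadrics, $N_{S|\PP^4}\cong\h{O}_S(2)\oplus\h{O}_S(2)$, so restricting to $C$ and using $N_{C|S}\cong\h{O}_C(C)$ yields
\[
0\to\h{O}_C(C)\to N_{C|\PP^4}\to\h{O}_C(2)\oplus\h{O}_C(2)\to 0 .
\]
Passing to the long exact cohomology sequence, $h^1(C,N_{C|\PP^4})=0$ will follow from $h^1(C,\h{O}_C(C))=0$ together with $h^1(C,\h{O}_C(2))=0$; and $\chi(N_{C|\PP^4})=\chi(\h{O}_C(C))+2\chi(\h{O}_C(2))=26+2\cdot 13=52$. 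The first vanishing is immediate, since $\deg\h{O}_C(C)=C^2=39>2g-2=26$, so $\h{O}_C(C)$ is non-special.

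The delicate vanishing is $h^1(C,\h{O}_C(2))=0$. Since $\deg\h{O}_C(2)=26=2g-2$, this is equivalent to $\h{O}_C(2)\not\cong K_C$, i.e. to the embedding being non-semicanonical — precisely the feature distinguishing this non-complete-intersection curve from the curve $(9;3^5)$ of Proposition \ref{g=r+9}(v). I would verify it directly on $S$ through the restriction sequence
\[
0\to\h{O}_S(2H_S-C)\to\h{O}_S(2H_S)\to\h{O}_C(2)\to 0,\qquad H_S=(3;1^5).
\]
Kodaira vanishing gives $H^1(S,\h{O}_S(2H_S))=0$ (as $2H_S=K_S+3H_S$ with $3H_S$ ample), while Serre duality identifies $H^2(S,\h{O}_S(2H_S-C))\cong H^0(S,\h{O}_S(K_S-2H_S+C))^\vee=H^0(S,(-1;0,-1^4))^\vee$, which vanishes because the class $(-1;0,-1^4)$ has negative leading coefficient and so is not effective. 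Hence $h^1(C,\h{O}_C(2))=0$, and therefore $h^1(C,N_{C|\PP^4})=0$.

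This completes the reduction: $h^0(C,N_{C|\PP^4})=\chi(N_{C|\PP^4})=52$, so $\Hh_{13,14,4}$ is smooth at $[C]$ of local dimension $52$, and the component through $[C]$ is generically reduced of the expected (minimal) dimension $\Xx(13,14,4)=52$, as asserted. The main obstacle is exactly the non-semicanonical vanishing $\h{O}_C(2)\ne K_C$: everything else is formal once a valid curve is in hand, so the real content lies in selecting a curve on $S$ that is \emph{not} a multiple of $-K_S$ (hence not a complete intersection) while retaining $(d,g)=(13,14)$, smoothness and linear normality, for which the system $(8;3,2^4)$ is the natural candidate.
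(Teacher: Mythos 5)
Your proof is correct, and it reaches the statement by a genuinely leaner route than the paper's, starting from the same ingredients. Both arguments use the same curve (a general $C\in(8;3,2^4)$ on the anticanonical del Pezzo $S=\PP^2_5\subset\PP^4$), the same exact sequence $0\to N_{C|S}\to N_{C|\PP^4}\to N_{S|\PP^4}\otimes\h{O}_C\to 0$ with $N_{S|\PP^4}\cong\h{O}_S(2)^{\oplus 2}$, and the same decisive cohomological fact, namely that $K_S+C-2H_S=(-1;0,-1^4)$ is non-effective. The difference lies in how the conclusion is extracted. The paper computes $h^0(C,N_{C|S})=26$ and $h^0(C,N_{S|\PP^4}\otimes\h{O}_C)=26$ to get the tangent-space bound $h^0(C,N_{C|\PP^4})=52$, and separately constructs an explicit $52$-dimensional irreducible family $\Ff$ of such curves, its dimension counted via the Severi variety $\Sigma_{\Nn_1,4}$ of residual curves on the cubic scroll ($\dim\Sigma_{\Nn_1,4}=\dim|5H-2L|-4=34$, then $34-\dim\Aut(S)+\dim\Aut(\PP^4)=52$), to provide the matching lower bound; equality of the two forces $\Ff$ to be dense in a generically reduced component of dimension $52$. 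You instead prove the vanishing $h^1(C,N_{C|\PP^4})=0$, i.e.\ unobstructedness, which at once makes $[C]$ a smooth point of $\Hh_{13,14,4}$ of local dimension $\chi(N_{C|\PP^4})=\Xx(13,14,4)=52$; no auxiliary family and no Severi-variety count are needed (your appeal to Proposition \ref{facts} is in fact redundant, since smoothness at $[C]$ already pins the dimension). What the paper's longer route buys is a geometric description of the component -- the closure of the locus of curves on smooth quartic del Pezzo surfaces, with residual curves identified inside $\Sigma_{\Nn_1,4}$ -- which is reused in the surrounding remarks (gonality, comparison with the semicanonical curve $(9;3^5)$ of Proposition \ref{g=r+9}(v)); your route buys brevity and makes generic reducedness automatic. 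A further small point in your favor: your Serre-duality formulation of the key vanishing, $H^2(S,2H_S-C)\cong H^0(S,(-1;0,-1^4))^\vee=0$ combined with $H^1(S,2H_S)=0$ from Kodaira, is fully rigorous as written, whereas the paper's corresponding step asserts $h^0(K_C\otimes\h{O}_C(-2))=0$ directly from non-effectiveness of the surface class, leaving implicit the surjectivity of the restriction map (which it only spells out in the analogous computation in Proposition \ref{r+117}).
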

\begin{proof} We borrow notations and computations from the proof of Theorem \ref{sub6x} (c). Take a general $C\in (8;3,2^4)=\Ll(0,4)$; cf. \eqref{octic}. We have $C\subset\PP^2_5\subset\PP^4$ and $(d,g)=(13,14)$. We consider the irreducible family $\Ff$ of smooth curves lying on  smooth del Pezzo surfaces in $\PP^4$;
$$\Ff:= \{C \,|\,(d,g)=(13,14), \,C\subset T\cong\PP^2_5\subset\PP^4, \,C\in (8;3,2^4)\}.$$

\vni
Note that the residual curve (by definition, the curve induced by the residual series of the hyperplane series of $C$) $C^\vee\subset\PP^4$ lies on a smooth cubic scroll $S\subset\PP^4$, 
$C^\vee\in|5H-2L|=\Nn_1$ and $C^\vee\in\Sigma_{\Nn_{1,4}}$; cf. part (A-2) (iii) of the proof of Theorem \ref{r+10}. We have
$\dim\Sigma_{\Nn_{1,4}}=\dim|\Nn_1|-4=34$, hence 
$$\dim\Ff=\dim\Sigma_{\Nn_{1,4}}-\dim\Aut(S)+\dim\Aut(\PP^4)=34-6+24=52.$$
We want to estimate the dimension of a component $\h{H}\subset\h{H}_{13,14,4}$ containing $\Ff$. To do this, we compute $h^0(C,N_{C|\PP^4})=\dim T_C{\h{H}}$.
 
 
 \vni
We consider the standard exact sequence
 $$0\rightarrow N_{C|S}\rightarrow N_{C|\PP^4}\rightarrow N_{{S|\PP^4}_{|C}}\rightarrow 0.$$
 
 \vni 
We note that 
 $$N_{C|S}=\h{O}_S(C)_{|C}=\h{O}_S(C)\otimes \h{O}_C.$$ 
 \vni
 (i) Claim: $h^0(C, N_{C|S})=26$ and  $h^1(C, N_{C|S})=0$.
 \vni This follows from the exact sequence
 $$0\rightarrow\h{O}_S\rightarrow\h{O}_S(C)\rightarrow\h{O}_S(C)\otimes\h{O}_C\rightarrow0~\, ;$$ 
 a routine computation yields $$h^0(S, \h{O}_S(C))=\frac{(8+1)(8+2)}{2}-\frac{3(3+1)}{2}-4\cdot \frac{2(2+1)}{2}=27,$$ and 
 $h^1(S,\h{O}_S)=0$ since $S$ is rational. 
 Thus,  $$h^0(C, N_{C|S})=26 \mathrm{~ and ~}  h^1(C, N_{C|S})=0$$ by Riemann-Roch.
 
 \vni
 (ii) Claim: $$h^0(C, N_{{S|\PP^4}_{|C}})=h^0(C, N_{S|\PP^4}\otimes\h{O}_C)=26.$$
 \vni
 Since $S$ is a complete intersection of two quadrics, we have
 $$N_{S|\PP^4}=\h{O}_{S}(2)\oplus\h{O}_{S}(2), $$
 and hence 
 $$N_{S|\PP^4}\otimes\h{O}_C=(\h{O}_{S}(2)\oplus\h{O}_{S}(2))\otimes\h{O}_C=\h{O}_C(2)\oplus\h{O}_C(2).$$
 By Riemann-Roch,  $$h^0(C, \h{O}_C(2))=26-14+1+h^0(K_C\otimes\h{O}_C(-2))
 .$$ Note that
 $$K_S\otimes\h{O}_S(C)\otimes\h{O}_S(-2)=|(8;3,2^4)-3(3;1^5)|=|(-1;0,-1^4)|$$
 and by adjunction
 $$(K_S\otimes\h{O}_S(C)\otimes\h{O}_S(-2))\otimes\h{O}_C=K_C\otimes\h{O}_C(-2),
 $$ we have $h^0(K_C\otimes\h{O}_C(-2))=0$.
 Hence  $h^0(C,\h{O}_C(2))=13$ and the claim follows. 
 
 \vni
 (iii) By Claims (i) and (ii) we have $$h^0(C, N_{C|\PP^4})=h^0(C, N_{S|\PP^4}\otimes\h{O}_C)+h^0(C, N_{C|S})=52,$$ and hence
$$\dim\Ff\le h^0(C, N_{C|\PP^4})=52.$$
\end{proof}

\begin{rmk}
\begin{itemize}
\item[(i)] We were not able to determine the irreducibility of $\Hh_{13,14,4}$. We only showed that there is a generically reduced component of the expected dimension $\Xx(13,14,4)=52$. If one can find an irreducible 
family whose dimension exceeds $52$, this will lead us to the reducibility of $\Hh_{13,14,4}$.
\item[(ii)] One can check that there is no component of $\Hh_{13,14,4}$ whose smooth element lies on a cubic scroll or a normal cubic cone, utilizing standard computation used in the proof Theorem \ref{r+10}, e.g. \eqref{scrolldegree}, \eqref{scrollgenus},  \eqref{cone}, \eqref{conevertex}.
\item[(iii)] On the other hand, there is  only one component of $\Hh_{13,14,4}$ whose general element lies
on a quartic surface, which is the one we described in Proposition \ref{d13r4}. By  \cite[Theorem 4.1]{rmi}, curves on a singular del Pezzo is a specialization of curves on a smooth del Pezzo surface.  By a formula similar to \eqref{cone}, \eqref{conevertex} for curves on a cone over an elliptic curve, there is no smooth curve on an elliptic cone. 
\item[(iv)] The author tried without success to determine the gonality of the curve $C\in(11;5,3^7)$ on $\PP^2_8\stackrel{(4;2,1^7)}{\longrightarrow} S\subset\PP^4$, a smooth curve in $\PP^4$ with $(d,g)=(13,14)$ whereas $\deg S=(4;2,1^7)^2=5$. If $C$ is $6$-gonal, this would imply that there is another component containing $C$, by semi-continuity of gonality. By \csi, a general curve $C\in\Ff$ in Proposition \ref{d13r4} is $5$-gonal. 
\item[(v)] Through a tedious computation, it can also be shown that 
there is no component of $\Hh_{13,14,4}$ whose smooth element lies on a Bordiga surface in $\PP^4$. 
\end{itemize}
\end{rmk}

\vni
We finish this section with the following theorem concerning the irreducibility of $\Hh_{2r+5,r+10,r}$ 
for $r=3$.
\begin{thm}\label{$g=13$} The Hilbert scheme $\h{H}_{11,13, 3}$ is irreducible of the expected dimension $44$.
\end{thm}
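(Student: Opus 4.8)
The plan is to proceed by liaison, linking a general curve to an elliptic quintic. First note that $\h{H}_{11,13,3}=\HL{11,13,3}$, since $\pi(11,4)=12<13$ forces every non-degenerate curve of degree $11$ and genus $13$ in $\PP^3$ to be linearly normal. In contrast to the cases $r=6,7,8$ of Theorem \ref{r+10}, here $g=13<\pi_1(13,4)=15$, so the residual series $\h{E}=\h{D}^\vee=g^4_{13}$ no longer produces a nearly extremal curve on a surface of minimal degree in $\PP^4$; I would therefore argue intrinsically in $\PP^3$. The curve lies on no quadric: a smooth curve of type $(a,b)$ on a smooth quadric would satisfy $a+b=11$ and $(a-1)(b-1)=13$, while \eqref{cone} with $n=2$ would give $(k-1)(10-k)=13$ on a quadric cone, neither of which has an integral solution. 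Hence $h^0(\h{I}_C(2))=0$ for every $C\in\h{H}_{11,13,3}$.

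Since $\h{O}_C(3)$ and $\h{O}_C(4)$ are non-special, the ideal-sheaf sequences yield $h^0(\h{I}_C(3))=h^1(\h{I}_C(3))-1$ and $h^0(\h{I}_C(4))=3+h^1(\h{I}_C(4))\ge 3$, so every $C$ lies on a net of quartics. Curves lying on a cubic surface sweep out a locus of dimension at most $19+\dim|(8;3,2^5)|=19+23=42$: on a smooth cubic $S\cong\PP^2_6$ the classes of degree $11$ and genus $13$ form a single orbit under $W(E_6)$ (for instance $(8;3,2^5)=\h{L}(0,3)$, with $h^0(\h{O}_S(8;3,2^5))=24$). By Proposition \ref{facts} every component of $\h{H}_{11,13,3}$ has dimension at least $\h{X}(11,13,3)=44>42$, so no component is contained in the cubic locus. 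Consequently the general member $C$ of each component lies on no cubic, has $h^0(\h{I}_C(4))=3$, and, as $C$ lies on neither a quadric nor a cubic, a general pencil inside $\PP(H^0(\h{I}_C(4)))$ cuts out a genuine complete intersection.

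Now comes the liaison step. For two general quartics $Q_1,Q_2\supset C$ the complete intersection $X=Q_1\cap Q_2$ residual to $C$ is a curve $C'$ of degree $16-11=5$, and the liaison genus formula $g(C)-g(C')=\tfrac12(4+4-4)(11-5)=12$ gives $g(C')=1$; for general choices $C'$ is a smooth connected elliptic quintic, a point of $\h{H}_{5,1,3}$. Conversely, an elliptic quintic $C'\subset\PP^3$ has $h^0(\h{I}_{C'}(4))=35-20=15$, and linking it by a general pencil of quartics returns a smooth degree-$11$, genus-$13$ curve; one checks via liaison duality that the resulting $C$ satisfies $h^1(\h{I}_C(3))=h^1(\h{I}_{C'}(1))=1$ and $h^1(\h{I}_C(4))=h^1(\h{I}_{C'}(0))=0$, in agreement with the previous paragraph. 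Since $\rho(5,1,3)=5\ge 0$ and these projected elliptic quintics are unobstructed, $\h{H}_{5,1,3}$ is well known to be irreducible of dimension $20$.

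To finish, I would form the incidence variety $\h{J}=\{(C',\Lambda):C'\in\h{H}_{5,1,3},\ \Lambda\in\mathbb{G}(1,\PP(H^0(\h{I}_{C'}(4))))\}$, a $\mathbb{G}(1,14)$-bundle over the irreducible $\h{H}_{5,1,3}$, hence irreducible of dimension $20+26=46$, together with the residuation map $\h{J}\dashrightarrow\h{H}_{11,13,3}$ sending $(C',\Lambda)$ to the curve residual to $C'$ in the complete intersection cut out by $\Lambda$. By the second paragraph this map dominates every component of $\h{H}_{11,13,3}$, and its fibre over a general $C$ is the set of pencils in the net $\PP(H^0(\h{I}_C(4)))\cong\PP^2$, namely $\mathbb{G}(1,\PP^2)$ of dimension $2$; therefore the image is irreducible of dimension $46-2=44$, and $\h{H}_{11,13,3}$ is irreducible of the expected dimension $44$. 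The main obstacle I anticipate is the control of the linkage: one must verify that for a general member of each component the net $\PP(H^0(\h{I}_C(4)))$ cuts out $C$ scheme-theoretically so that the residual scheme is reduced, that this residual is genuinely a smooth connected elliptic quintic rather than a degenerate degree-$5$, genus-$1$ curve, and that the correspondence is reversible in families so that irreducibility really transfers; the estimate $42<44$ for the cubic locus, which excludes a spurious component of curves on cubic surfaces, must likewise be secured including singular and reducible cubics.
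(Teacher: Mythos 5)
Your core construction coincides with the paper's own proof: both arguments link a general member of every component of $\h{H}_{11,13,3}$ to an elliptic quintic through a pencil of quartics and then transfer irreducibility from $\h{H}_{5,1,3}$. Your incidence variety $\h{J}$ is exactly the paper's $\Sigma$ (a $\mathbb{G}(1,14)$-bundle over $\h{H}_{5,1,3}$, dimension $26+20=46$), your liaison-duality computations of $h^1(\h{I}_C(3))$ and $h^1(\h{I}_C(4))$ are the paper's, and the final count $46-\dim\mathbb{G}(1,2)=44$ is identical.

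The genuine gap is the one you flag yourself at the end: excluding components whose general element lies only on a \emph{singular} (or reducible, or non-normal) cubic surface. Your $W(E_6)$-orbit dimension estimate $19+23=42<44$ applies only to smooth cubics, and it does not extend formally: on a cone or a non-normal cubic there is no blow-up-of-$\PP^2$ model and the class analysis is different, while on a normal cubic with double points the relevant linear systems must still be controlled. This step is not cosmetic, because the whole liaison argument needs the general member of \emph{every} component to lie on no cubic at all (both for $h^0(\h{I}_C(4))=3$ and for the two quartics of the pencil to meet in a bona fide complete intersection). The paper devotes a substantial block of the proof to precisely this: for normal cubics that are not cones it invokes Brevik's theorem that every curve on such a surface is a specialization of curves on smooth cubics; for normal cubic cones it uses the Segre-type genus formulas $g=1+d(d-3)/6-2/3$ and $g=1+d(d-3)/6$, neither of which is satisfied by $(d,g)=(11,13)$; and for non-normal cubics (projections of the degree-$3$ scrolls in $\PP^4$) it runs the adjunction computation $13=\tfrac12(k-1)(2\cdot 11-3k-2)$ and checks there is no integer solution. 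Some version of these three arguments must be supplied. A second, smaller omission: you assert $h^0(\h{I}_{C'}(4))=35-20=15$ for the elliptic quintic, which requires $h^1(\h{I}_{C'}(4))=0$; this is not automatic, since the quintic is a projection of the elliptic normal curve in $\PP^4$ and indeed has $h^1(\h{I}_{C'}(1))=1$. The paper proves the needed vanishing by a second linkage, from $C'$ to a rational quartic $F$ via two cubics, which gives $h^1(\h{I}_{C'}(4))=h^1(\h{I}_F(-2))=0$.
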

\begin{proof} 
A smooth $C\subset \PP^3$ with $(d,g)=(11,13)$ does not lie on a quadric surface because 
there is no integer solution for the equations;
 $$a+b=11=d, ~~(a-1)(b-1)=13=g.$$
We also see that $C$ lies on at most one cubic surface, by Bezout. 
Note that smooth space curves of degree $d$ and genus $g$ on a smooth cubic surface form a finite union of locally closed irreducible family in $\h{H}_{d,g,3}$ of dimension $d+g+18$ if $d\ge 10$ by \cite[Proposition B.1]{Gruson}. Since $d+g+18< 4d$ for $(d,g)=(11,13)$, the  family of curves lying on 
smooth cubics does not constitute a component, even though $\Hh_{11,13,3}\neq\emptyset$ as we have seen in Theorem \ref{sub6x}.
\vni
On the other hand, it is possible that there might exist  a component of $\mathcal{H}_{11,13,3}$ whose general element lies only on a singular cubic surface. However, one may argue that no such component exists as follows. 
Note that every singular cubic surface $S\subset \PP^3$ is one of the following three types.
\vskip 4pt
(i) $S$ is a normal cubic surface with some double points only.

(ii) $S$ is a normal cubic cone.

(iii) $S$ is not normal, which may possibly be a cone. 

\vni
For the case (i), let $S$ be a normal cubic surface which is not a cone. By a result due to Brevik \cite[Theorem 5.24]{Brevik}, every curve on $S$ is a specialization of curves on a smooth cubic surface. Therefore we are done for the case (i). 

\vni
For the case (ii), we let $C$ be a smooth curve of degree $d$ and genus $g$ on a normal cubic cone $S$. Recall that 

(a) $g=1+d(d-3)/6-2/3$ if $C$ passes through the vertex of $S$ 

(b) $g=1+d(d-3)/6$, otherwise

\noindent
which can be found in \cite[Proposition 2.12]{Gruson} as an application of C. Segre formula.
However $(d,g)=(11,13)$ satisfies neither of the above. Alternatively one may check  directly that
there is no smooth curve of degree $d=11$ and genus $g=13$ on a cone $S$ over a smooth plane cubic
$E\subset\PP^2$, realizing $S$ as the image of the ruled surface $\PP(\Oo_E\oplus\Oo_E(3))$.
\vni
(iii) Let $C$ be a smooth curve of degree $d$ and genus $g$ on a non-normal cubic surface $S$. Recall that if $S$ is a cone, then $S$ is a cone over a singular plane cubic, in which case $S$ is a projection of a cone $S'$ over a twisted cubic in a hyperplane in  $\PP^4$ from a point outside $S'$. Furthermore,  the minimal desingularisation $\tilde S$ of $S'$ is isomorphic to the ruled surface 
$$\mathbb{F}_3=\PP (\mathcal{O}_{\PP^1}\oplus \mathcal{O}_{\PP^1}(3)).$$ 
If $S$ is not a cone,  $S$ is a projection of a rational normal scroll $$S''\cong\tilde S\cong\mathbb{F}_1=\PP (\mathcal{O}_{\PP^1}(1)\oplus \mathcal{O}_{\PP^1}(2))\subset \PP^4$$  from a point outside $S''$. 
In both cases, we have $\text{Pic}~ \tilde S=\mathbb{Z}h\oplus \mathbb{Z}f \cong\mathbb{Z}^{\oplus 2}$, where $f$ is the class of a fiber of $\tilde S \rightarrow \PP^1$ and $h=\pi^*(\mathcal{O}_S(1))$ with $\tilde S\stackrel{\pi}{ \rightarrow} S$. Note that $h^2=3, f^2=0$, $h\cdot f=1$ and $K_{\tilde S}\equiv -2h+f$. Denoting by $\tilde C\subset\tilde S$ the strict transformation of the curve $C\subset S$, we set $k:=(\tilde C \cdot f)_{\tilde S}$. We have $\tilde C \equiv kh+(d-3k)f=kh+(11-3k)f$.
By adjunction formula, it follows that
$$g=13=\frac{(2\cdot 11-3k-2)(k-1)}{2},$$ which does have an integer solution and thus we are done with the case (iii).

\vni
Since $\Hh_{11,13,3}\neq\emptyset$, we take a component $\Hh\subset\Hh_{11,13,3}$.
Let $C$ be a general element of $\Hh$. We recall that there is a smooth curve of degree $d=11$ and genus $g=13$ lying on a smooth 
irreducible quartic surface by a result of Mori \cite[Theorem 1]{Mori}. We may assume that  $C$ does not lie on a quadric or a cubic. 

\vni
Since every smooth curve $C$ of genus $g=13$ of degree $d=11$ in $\PP^3$ lies on at least 
\begin{equation}\label{quartic11}
h^0(\PP^3, \h{O}(4))-h^0(C,\h{O}(4))=35-(4\cdot 11-g+1)=3
\end{equation}
 independent quartics, we see that $C$ is 
residual to a curve $D\subset\PP^3$ of degree $e=5$ and genus $h=1$ in the complete intersection of two (irreducible) quartics
by the basic relation
\begin{equation*}
2(g-h)=(s+t-4)(d-e).
\end{equation*}

\vni
Consider the locus
$$\Sigma\subset\mathbb{G}(1,\PP(H^0(\PP^3,\h{O}(4))))=\mathbb{G}(1,34)$$
of pencils of quartic surfaces whose base locus consists of a curve $C$ of degree $d=11$ and genus $g=13$ and an elliptic quintic $D$ which are directly linked via complete intersection of quartics in the pencil, together with the two obvious maps 
\[
\begin{array}{ccc}
\hskip -48pt\mathbb{G}(1,34)\supset\Sigma&\stackrel{\pi_C}\dashrightarrow~~
\h{I}_4\subset\h{H}_{11,13,3}\\
\\
\dashdownarrow\vcenter{\rlap{$\scriptstyle{{\pi_D}}\,$}}
\\ 
\\
\h{H}_{5,1,3}
\end{array}
\]
where $\h{I}_4$ is the image of $\Sigma$ under $\pi_C$.
An elliptic quintic $D\subset\PP^3$ lies on at least 
\begin{equation}\label{D4}
h^0(\PP^3,\h{O}(4))-h^0(D,\h{O}(4))=35-(4\cdot 5-1+1)=15
\end{equation} independent quartics. 
Note that $D\in\h{H}_{5,1,3}$  is  directly linked 
to $F\in\h{H}_{4,0,3}$ via complete intersection of two cubics. 
Since $F$ is rational,  $h^0(\PP^3,\h{I}_F(2))=1$; otherwise $F$ is elliptic. From $0\rightarrow \h{I}_F(2)\rightarrow \h{O}_{\PP^3}(2)\rightarrow \h{O}_F(2)\rightarrow 0$, we have  $h^1(\PP^3,\h{I}_F(2))=0$. From the  well-known relation
$$h^1(\PP^3,\h{I}_{C_1}(m))=h^1(\PP^3,\h{I}_{C_2}(s+t-4-m))$$
when $C_1$ and $C_2$ are directly linked by complete intersection of two surfaces of degrees $s$ and $t$,
we have \begin{equation}\label{4=1}h^1(\PP^3,\h{I}_C(4))=h^1(\PP^3,\h{I}_D)=h^1(\PP^3,\h{I}_F(2))=0
\end{equation}
\begin{equation}\label{D1}
h^1(\PP^3,\h{I}_D(4))=h^1(\PP^3,\h{I}_F(-2))=0.
\end{equation}
 By \eqref{D1} and \eqref{D4},  we have $h^0(\PP^3,\Ii_D(4))=15$ and therefore $\pi_D$ is generically surjective with fibers open subsets of $\mathbb{G}(1,14)$.  Since $\dim\h{H}_{5.1.3}$ is known to be irreducible (cf.\cite{E1} or \cite[page 54]{H1}),  $\Sigma$ is irreducible and $$\dim\Sigma=\dim\mathbb{G}(1,14)+\dim\h{H}_{5,1,3}=26+4\cdot 5=46.$$ On the other hand, by \eqref{4=1} and \eqref{quartic11} every smooth curve $C\in \h{I}_4$ lies on exactly $3$ independent quartics and hence $\pi_C$ is generically surjective with fibers open subsets of $\mathbb{G}(1,2)$. Finally it follows that $\h{H}_{11.13.3}$ is irreducible of dimension
$$\dim\Sigma -\dim{\mathbb{G}}(1,2) =4\cdot11.$$
\end{proof}

\section{Irreducibility of $\HL{2r+6,r+11, r}$}
In this section we determine the irreducibility of $\HL{2r+6,r+11, r}$ for $7\le r\le11$. Recall that 
$\HL{2r+6,r+11, r}\neq\emptyset$ only if $3\le r\le11$ by Theorem \ref{sub6x} (c). The following result employs a similar method we used in Theorem \ref{r+10}. However, there appear some subtleties which need some caution. 
\begin{thm}\label{2r+6} $\HL{2r+6,r+11,r}$ is irreducible if $r=8,9,11$ and reducible if $r=10$.
\end{thm}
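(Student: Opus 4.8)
The plan is to run the residual-series analysis of Theorem \ref{r+10}. Fix a component $\h{G}\subset\w{\h{G}}_\h{L}\subset\h{G}^r_{2r+6}$ whose general element $(p,\Dd)$ is a very ample and complete $g^r_{2r+6}$, and put $\h{E}:=\Dd^\vee=g^4_{14}$. I would first check that $\h{E}$ is base-point-free and birationally very ample: were $\h{E}$ compounded, Lemma \ref{easylemma1} would force $\Dd=\h{E}^\vee$ not to be very ample after running through the covering types $(k,f,\delta)$ with $kf+\delta=14$, $f\ge4$, $k\ge2$ (just as in Theorem \ref{sub6x}(c),(d)); and a base point would produce a birationally very ample $g^4_{13}$, impossible since $g=r+11>18=\pi(13,4)$ for $r\ge 8$. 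As $\pi_1(14,4)=16<g=r+11\le 22=\pi(14,4)$ for $8\le r\le 11$, the degree $14$ image curve $\ce\subset\PP^4$ is nearly extremal (extremal when $r=11$) and hence lies on a surface $S\subset\PP^4$ of minimal degree $3$, i.e. a smooth cubic scroll $S\cong\mathbb{F}_1\cong\PP^2_1$ or a cone; cf. \cite[Theorem 3.15, page 99]{H1}.

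Assume first that $S$ is a smooth scroll and write $\ce\in|aH+bL|$ with $3a+b=14$ and arithmetic genus $p_a(a)=\tfrac12(a-1)(26-3a)$. The constraint $\delta:=p_a(a)-g\ge 0$ leaves only $a\in\{4,5,6\}$, with $p_a=21,22,20$ respectively. Using the identification (\ref{picid}) I would blow up the $\delta$ nodes of $\ce$, obtaining $\w\ce\in(14-a;14-2a,2^\delta)$ on $\PP^2_{1+\delta}$ and the residual class
\[
\Rr_a:=|K_{\PP^2_{1+\delta}}+\w\ce-\w H|=(9-a;\,12-2a,\,1^\delta),\qquad \w H=\pi^*\Oo_S(1).
\]
Exactly as in Theorem \ref{r+10}, Kodaira vanishing makes the restriction $H^0(\PP^2_{1+\delta},\Rr_a)\to H^0(\w\ce,\Rr_a\otimes\Oo_{\w\ce})$ an isomorphism, so $\Dd=\Rr_a|_{\w\ce}$ is complete and $\w\ce$ is linearly normal; the only remaining question is whether $\Rr_a$ is very ample along $\w\ce$.

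This is where the cases split and the dichotomy appears. For $a=6$ one has $\Rr_6\cdot e=12-2a=0$, so $\Rr_6$ contracts the negative section $e$, which $\w\ce$ meets in $\w\ce\cdot e=14-2a=2$ points, so $\Dd$ is not very ample. For $a=4$ with $\delta\ge1$ (that is $r=8,9$) the $(-1)$-curve $l-e-e_i$ satisfies $\Rr_4\cdot(l-e-e_i)=a-4=0$ while $\w\ce\cdot(l-e-e_i)=a-2=2$, so again $\Dd$ fails to be very ample. For $a=4$ with $\delta=0$ (that is $r=10$) one finds $\Rr_4=(5;4)\cong C_0+5f$ on $\mathbb{F}_1$, which is very ample, and $\Rr_5=(4;2,1^\delta)$ meets every $(-1)$-curve positively and is very ample along $\w\ce$ for all $0\le\delta\le3$ by \cite{sandra}. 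Hence the surviving systems are: $a=5$ only for $r=8,9$ and $r=11$, and both $a=4$ and $a=5$ for $r=10$. Each survivor produces one irreducible family, since the residual curves sweep the Severi variety $\Sigma_{\Nn_a,\delta}$ with $\Nn_a=\Oo_S(aH+(14-3a)L)$, irreducible by Remark \ref{Severi}(ii), and the generically injective correspondence $\w{\h{G}}_\h{L}\dashrightarrow\tilde\Sigma_{\Nn_a,\delta}$ of Theorem \ref{r+10} together with the $\PP GL(r+1)$-bundle structure turns each nonempty piece into a single component of $\HL{2r+6,r+11,r}$. For $r=10$ the two survivors $a=4$ and $a=5$ carry pencils of different degrees $g^1_4$ and $g^1_5$, so by semicontinuity of gonality neither is in the closure of the other, yielding reducibility; for $r=8,9,11$ the single survivor $a=5$ yields irreducibility.

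Finally I would rule out extra components from curves on a rational normal cone $S\subset\PP^4$ by the desingularization argument of Theorem \ref{r+10}(B): passing to $\FF_3$, the strict transform lies in $|kh+(14-3k)f|$ with the same numerics $k\in\{4,5,6\}$, and blowing up the $\delta$ nodes one shows via the projection-formula and Castelnuovo-bound estimates of (B-2),(B-3) that the residual series is not very ample, while smooth cone curves are specializations of scroll curves by Proposition \ref{specialization}. The main obstacle is exactly this boundary very-ampleness bookkeeping for $a=4$: whether the contracted $(-1)$-curve $l-e-e_i$ is available depends on $\delta\ge1$, and since $\delta=0$ occurs only for $r=10$, it is this single arithmetic coincidence that separates the reducible case from the irreducible ones; confirming that $\Rr_5$ is genuinely very ample at the extreme value $\delta=3$ (when $r=8$) and that the cone contributes nothing new are the places demanding the most care.
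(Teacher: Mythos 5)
Your proposal follows essentially the same route as the paper's own proof: pass to the residual $g^4_{14}$, show it is base-point-free and birationally very ample, classify the image curve on a minimal-degree cubic surface into the three systems with $a=4,5,6$ (the paper's $\h{M}_2,\h{M}_1,\h{M}_3$), test very ampleness of the blown-up residual classes $(9-a;12-2a,1^\delta)$ so that only $a=5$ survives for all $r$ and $a=4$ survives exactly when $\delta=0$ (i.e.\ $r=10$), invoke Tyomkin's irreducibility of the Severi varieties, and kill cone contributions by the $\FF_3$ contraction argument of Theorem \ref{r+10}(B). Only harmless slips to flag: $\pi_1(14,4)=18$ (not $16$, though $g\ge 19$ still exceeds it); on the cone one has $14-3k\ge 0$, so only $k=4$ occurs rather than $k\in\{4,5,6\}$; and for $r=10$ semicontinuity of gonality excludes only the $5$-gonal family from the closure of the $4$-gonal one, the reverse exclusion coming from the dimension count $\dim\h{G}_{\Mm_2,0}=38>37=\dim\h{G}_{\Mm_1,1}$, which your Severi-variety setup supplies.
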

\begin{proof} 
Given a component $\h{G}\subset{\w{\h{G}}}_\h{L}\subset\h{G}^r_{2r+6}$, choose a general $\h{D}\in\h{G}$ and set $\h{E}=\h{D}^\vee=g^4_{14}$. We claim that $\h{E}$ is base-point-free and birationally very ample.
If $\h{E}$ is compounded, we apply Lemma \ref{easylemma1} to deduce that  $\h{E}^\vee=\h{D}$ is not very ample. $\h{E}$ is base-point-free since $\pi(13,4)=18<g=r+11$.

\vni
In case $8\le r\le11$, $$\pi_1(14,4)=18<g=r+11\le p_a(\ce)\le\pi(14,4)=22 ~~ \hskip 12pt$$ hence $\ce\subset\PP^4$ is an extremal or a nearly extremal curve lying on a cubic surface  $S\subset\PP^4$. 

\vni
{\bf (A)}
We first assume $S\cong\mathbb{F}_1\cong\PP^2_1$.
Set $$\delta=\delta(p_a(\ce)):=p_a(\ce)-g=p_a(\ce)-(r+11)\ge 0$$
and we list up all the pairs $(a,b)\in\mathbb{N}\times\mathbb{Z}$ satisfying the equations
(\ref{scrolldegree}), (\ref{scrollgenus}) in Remark \ref{minimal}; $n=3, d=14$ with  $19\le p_a(\ce)\le22$; 
\begin{equation}\label{g=14}
\begin{cases}
p_a(\ce)=22; ~(a,b)=(5,-1), &8\le r\le 11\\

p_a(\ce)=21; ~(a,b)=(4,2), &8\le r\le 10\\

p_a(\ce)=20; ~(a,b)=(6,-4), &8\le r\le 9\\

p_a(\ce)=19; ~\mathrm{no ~solution}\\

p_a(\ce)=18; ~\mathrm{no ~solution}
\end{cases}
\end{equation}

\vni
Denoting the three linear systems on $S$ by 
$$\h{M}_1=|5H-L|, \h{M}_2=|4H+2L|, \h{M}_3=|6H-4L|, $$
\vni
we have the following possibilities;
\[
\begin{cases}
r=11; &\ce\in\h{M}_1, ~\delta=0 \\

r=10; &\ce\in \h{M}_1, ~\delta=1 ~\textrm{ or }~ \ce\in\h{M}_2, ~\delta=0\\

r=~9; &\ce\in \h{M}_1, ~\delta=2 ~\textrm{ or }~  \ce\in\h{M}_2, ~\delta=1 ~\textrm{ or }~ \ce\in\h{M}_3, ~\delta =0\\

r=~8; &\ce\in \h{M}_1, ~\delta=3 ~\textrm{ or }~ \ce\in\h{M}_2, ~\delta=2 ~\textrm{~ or ~}~ \ce\in\h{M}_3, ~\delta =1.
\end{cases}
\]

\vni
We need to check, given a fixed $r$ and for each possible value $p_a(\ce)$,  what linear systems $\h{M}_i$ such that $\ce\in\h{M}_i$ contribute to a component $\h{G}\subset\w{\h{G}}_\h{L}\subset\h{G}^r_{2r+6}$ whose general element is complete and very ample. {\bf In other words, we are asking;  for what $\h{M}_i$ with $\ce\in\h{M}_i$, is  the linear system 
$$|D|:=|K_{\PP^2_{\delta+1}}+\widetilde{C}_{\Ee}-\w{H}|_{{|\widetilde{C}_{\Ee}}}$$ very ample ?} 

\vni

\vni Here $\widetilde{C}_{\Ee}$ is the proper transformation of $\ce\in\Sigma_{{\h{{M}}_i},\delta}$ under the  blow up of $\mathbb{F}_1\cong\PP^2_1\cong S$ at $\delta=p_a(\ce)-g=p_a(\ce)-(r+11)$ nodal points of $\ce$,  $\mathbb{P}^2_{\delta+1}\stackrel{\pi}{\rightarrow}\mathbb{F}_1\cong S$ and $\w{H}=\pi^*\h{O}_S(1)$.

\vni
\begin{itemize}
\item[(i)] $p_a({\ce})=22\Longleftrightarrow \ce\in\h{M}_1$; every $8\le r\le 11$ is possible and $\delta=11-r$.
Since 
$$\ce\in\h{M}_1=|5H-L|=|5(e+2f)-f|=|9l-4e|=(9;4),$$ 
we have
  $$\widetilde{C}_{\Ee}\in (9,4,2^{11-r})$$ on  $\mathbb{P}^2_{\delta+1}=\PP^2_{12-r}$ -- which is a special case of the linear system  (\ref{nonic}) in Theorem \ref{sub6x} -- and 
  \begin{align*}
|K_{\PP^2_{12-r}}+\widetilde{C}_{\Ee}-\w{H}|&=|(-3;-1^{12-r})+(9;4,2^{11-r})-(2,1, 0^{12-r})|\\&=|(4;2,1^{11-r})|,
\end{align*}
which is very ample\footnote{$|(4;2,1^{11-r})|$ is very ample  not only for $8\le r\le 11$ but also for $4\le r\le 11$.}. Hence we have 
an irreducible family of nodal curves lying on a Hirzebruch surface $\mathbb{F}_1$ -- parametrized by the irreducible Severi variety $\Sigma_{\h{M}_1,11-r}$ -- whose
non-singular model
$$\widetilde{C}_{\Ee}\subset\PP^2_{12-r}\lhook\joinrel\xrightarrow{(4;2,1^{11-r})}\PP^r$$ 
 is embedded into $\PP^r$ by the linear system 
$|K_{\PP^2_{12-r}}+\widetilde{C}_{\Ee}-\w{H}|_{|\widetilde{C}_{\Ee}}$
as a  linearly normal curve of degree  
$$d=(K_{\PP^2_{12-r}}+\widetilde{C}_{\Ee}-\w{H})\cdot\widetilde{C}_{\Ee}=2r+6.$$
Therefore we may conclude that for every $8\le r\le 11$ and $\delta=p_a(\ce)-g=11-r$, 
there is a component $\h{G}_{\Mm_1,11-r}\subset\h{G}^r_{2r+6}$ associated with the Severi variety $\Sigma_{\h{M}_1,11-r}$, i.e $$\h{G}_{\Mm_1,11-r}:={\widetilde\Sigma}_{\h{M}_1,11-r}^\vee={\Sigma}_{\h{M}_1,11-r}^\vee/\Aut(S),  ~\mathrm{where}$$
$$\Sigma_{\h{M}_1,11-r}^\vee:=\{~|K_{\PP^2_{12-r}}+\widetilde{C}_{\Ee}-\w{H}|_{|{\widetilde{C}_{\Ee}}}~|~\ce\in\Sigma_{\h{M}_1,\delta}\}.$$
Note that a general $\ce\in\Sigma_{\h{M}_1,11-r}$ is $5$-gonal by \csi.
\vni
\item[(ii)] $p_a({\ce})=21\Longleftrightarrow \ce\in\h{M}_2$\,; only $8\le r\le 10$ is possible and $\delta=10-r$.
\vni On $S\cong\PP^2_1$, we have
$$\ce\in\h{M}_2=|4H+2L|=|4(e+2f)+2f|=|10l-6e|=(10;6).$$ 
Thus we have $$\widetilde{C}_{\Ee}\in (10;6,2^{10-r})$$
after  blowing up $\PP^2_1$ at $\delta=10-r$ nodal points of $\ce$ and 
\begin{align*}|K_{\PP^2_{11-r}}+\widetilde{C}_{\Ee}-\w{H}|&=|(-3;-1^{11-r})+(10;6,2^{10-r})-(2,1, 0^{10-r})|\\&=|(5;4,1^{10-r})|.
\end{align*}
Note that both $|\widetilde{C}_{\Ee}|=(10;6,2^{10-r})$ and $|K_{\PP^2_{11-r}}+\widetilde{C}_{\Ee}-\w{H}|=(5;4,1^{10-r})$ are  very ample only if $r=10$ by \cite{sandra}. Hence only for the case $r=10$, there is an irreducible family
$\h{G}_{\Mm_2,0}\subset\w{\h{G}}_\h{L}\subset\h{G}^r_{2r+6}$ 
associated with the Severi variety $\Sigma_{{\h{M}_2},10-r}=\Sigma_{\h{M}_2,0}$, i.e.
$$\h{G}_{\Mm_2,0}={\w{\Sigma}}_{\h{M}_2,0}^\vee.$$
Therefore 
we obtain another irreducible family of (smooth) curves on $\mathbb{F}_1$  which is embedded into $\PP^{10}$ by  
$|K_{\PP^2_1}+{\ce}-\h{O}_S(1)|=(5;4)$ as a linearly normal curve with $(d,g)=(2r+6, r+11)=(26,21)$. By \csi, a general $\ce\in\Sigma_{{\h{M}_2},10-r}=\Sigma_{\h{M}_2,0}$ is $4$-gonal. 
For $r=10$,  we have $$\dim\h{G}_{\Mm_1,11-r}=\dim\h{M}_1-1-\dim\textrm{Aut}(S)=37<\dim\h{G}_{\Mm_2,0}=38,$$
thus $\h{G}_{\Mm_1,1}$ and $\h{G}_{\Mm_2,0}$ are two distinct components of $\w{\h{G}}_\h{L}$ by semi-continuity of gonality.
\vni
\item[(iii)] $p_a({\ce})=20\Longleftrightarrow \ce\in\h{M}_3=|6H-4L|$\,; only $8\le r\le 9$ is possible.
By the same computation as before, 
 after blowing up $\PP^2_1$ at $\delta(p_a(\ce))=p_a(\ce)-g=9-r$ nodal points of $\ce$, we get  on $\PP^2_{10-r}$
 $$\widetilde{C}_{\Ee}\in (8;2,2^{9-r})=(8;2^{10-r})$$  and
\begin{align*}
|K_{\PP^2_{10-r}}+\widetilde{C}_{\Ee}-\w{H}|&=(-3;-1^{10-r})+(8;2^{10-r})-(2,1, 0^{9-r})\\&=(3;0,1^{9-r}),
\end{align*}
which is not very ample. Hence $|D|=|K_{\PP^2_{\delta+1}}+\widetilde{C}_{\Ee}-\w{H}|_{\widetilde{C}_{\Ee}}$
is not very ample.\footnote{For every $i=1,2,3$ with ${\ce}\in\h{M}_i$, the restriction map 
$$\rho: H^0(\PP^2_{\delta +1}, \h{O}(K_{\PP^2_{\delta+1}}+\widetilde{C}_{\Ee}-\w{H}))\rightarrow H^0(\widetilde{C}_{\Ee}, \h{O}(K_{\PP^2_{\delta+1}}+\widetilde{C}_{\Ee}-\w{H})\otimes\Oo_{\widetilde{C}_{\Ee}})$$ 
is surjective  since $h^1(\PP^2_{\delta +1}, \h{O}(K_{\PP^2_{\delta+1}}-\w{H}))=0$. }
\end{itemize}

\vni
Summing up, all the possible components of $\w{\h{G}}_\h{L}\subset\h{G}^r_{2r+6}$  such that $\ce$ lies on a smooth cubic scroll in $\PP^4$ are the following.
\[
\begin{cases}
r=11; &\h{G}_{\Mm_1,0}\\
r=10;  &\h{G}_{\Mm_1,1}, ~\h{G}_{\Mm_2,0} \\
r=~9; &\h{G}_{\Mm_1,2} \\
r=~8; &\h{G}_{\Mm_1,3}
\end{cases}
\]

\vni
{\bf (B)} We assume that $\ce$ lies on a rational normal cone $S\subset\PP^4$.  We borrow  the notation from Remark \ref{minimal} (iii) and take  $n=3$, $d=14$ there. Let $\widetilde{C}_{\Ee}$ be the proper transformation
of $\ce$ under the minimal desingularization $\FF_3\to S\subset\PP^4$ given by $|C_0+3f|$.
Again by \eqref{conevertex} and \eqref{cone} in Remark \ref{minimal}, we have 

\[(k,m,p_a(\w{C}_{\Ee}))=
\begin{cases} (4,2,21)\\
(3,5,17)\\
(2,8,10).
\end{cases}
\] 
For the second and the third triples, $p_a(\w{C}_{\Ee})<g=r+11$ for any $8\le r\le 11$, hence only the first triple $(k,m,p_a(\w{C}_{\Ee}))=(4,2,21)$ is possible.
\vni
{\bf (B-1):} If $r=11$, $g(\w{C}_{\Ee})=g=r+11=22>p_a(\w{C}_{\Ee})$ a contradiction. Thus there is no curve  (singular or smooth) $\ce$ with geometric genus $g=r+11=22$ and degree $d=14$ on the rational normal cone $S\subset\PP^4$.

\vni {\bf (B-2):} If $r=10$, $$p_a(\w{C}_{\Ee})=21=g=r+11=g(\ce)=g(\w{C}_{\Ee}),$$ hence $\w{C}_{\Ee}$ is smooth. On the other hand, the image $\ce$ of $\w{C}_{\Ee}$ under the 
morphism $\FF_3\to S\subset\PP^4$ is singular since $m=2$ and hence $g=g(\ce)<g(\w{C}_{\Ee})=21$, which is a contradiction. Again, there is no curve  (singular or smooth) $\ce$ with geometric genus $g=r+11=22$ and degree $d=14$ on the rational normal cone $S\subset\PP^4$.

\vni {\bf (B-3):} If $r=9$, $\w{C}_{\Ee}$ is singular with one double point; $\w{C}_{\Ee}\in|4C_0+14f|$.

\vni {\bf (B-4):} If $r=8$, $\w{C}_{\Ee}$ is singular with a double point and an additional singular (possibly infinitely near) point; $\w{C}_{\Ee}\in|4C_0+14f|$. 

\vni Note that we are under the condition $(k,m,p_a(\w{C}_{\Ee}))=(4,2,21)$, thus ${C}_\Ee$ is singular at the vertex of the cone $S\subset\PP^4$. Hence at least one singular point of $\w{C}_\Ee$ lies on the section $C_0$. However, in order to avoid complicated computation and unnecessary extra notation, we will show that a general element in the Severi variety of curves of geometric genus $g=r+11 \,(8\le r\le 9)$ in  the linear system $\h{N}=|4h+14f|$ on $\mathbb{F}_3$  does not have very ample $|K_{\widehat{C}}(-1)|$ where $\widehat{C}$ is the normalization of $\w{C}_{\Ee}\subset\mathbb{F}_3$. This simplification can be justified because being very ample is an open condition.

\vni
We did a similar job in the proof (part (B-2)) of Theorem \ref{r+10} for one node case. Even though all the arguments are parallel to part (B-2) in Theorem \ref{r+10}, we provide an outline of a proof for the two nodes case for  convenience of readers. 
\vni
We assume that $\w{C}_{\Ee}\subset\mathbb{F}_3$ has two nodes outside $C_0$ as its only 
singularities. Let $e_i\, (\,i=1, 2)$ be exceptional divisors and let
$f_i \, (i=1, 2)$ be the fibers containing the two nodal points of $\w{C}_{\Ee}$. After resolving
the two nodes we get a smooth curve ${\widehat{C}}\subset\mathbb{F}_{3,2}$ on the surface $\mathbb{F}_3$ blown up at two points.
We have
$${\widehat{C}}\in|4C_0+14f-\sum2e_i|.$$
Set
\begin{align*}
\h{M}:&=|{\widehat{C}}+K_{{\mathbb{F}_{3,2}}}-(C_0+3f)|\\&=|(4C_0+14f-\sum2e_i)+(-2C_0-5f+\sum e_i)-(C_0+3f)|\\&=|C_0+6f-\sum e_i|.
\end{align*}
The injectivity of  the restriction map 
$$\rho: H^0({{\mathbb{F}_{3,2}}},\h{O}(\h{M}))\longrightarrow H^0({\widehat{C}},\h{O}(\h{M})\otimes\h{O}_{{\widehat{C}}})$$ follows from 
$$h^0(\mathbb{F}_{3,2},\h{O}_{{\mathbb{F}_{3,2}}}(\h{M}-{\widehat{C}}))=h^0(\mathbb{F}_{3,2},\h{O}_{{\mathbb{F}_{3,2}}}(-(3C_0+8f-\sum e_i)))=0.$$
\vni
The surjectivity of the restriction map $\rho$ follows from Castelnuovo genus bound for a curve of degree $\widehat{C}\cdot\h{M}=(4C_0+14f-\sum 2e_i)\cdot (C_0+6f-\sum e_i)=22$ in $\PP^{s\ge 9}$ unless $\rho$ is surjective; $\pi(22,9)=18<g=19$.
Therefore we have 
$$\mathrm{Im}(\rho)= H^0(\widehat{C},\h{O}(\h{M})\otimes\h{O}_{\widehat{C}})$$
and the restriction map $\rho$ is an isomorphism. 
Denoting by $\w{f}_i$  the proper transformation of $f_i$ under the blow up, 
$$\tilde{f}_i^2=-1, \tilde{f}_i\cdot e_i=1,  C_0\cdot\tilde{f}_i=1, {\widehat{C}}\cdot \tilde{f}_i=2, 
\Mm\cdot\tilde{f}_i=(C_0+6f-\Sigma e_i)\cdot \tilde{f}_i=0.$$
Hence the morphism $\psi$ given by $\h{M}=|{\widehat{C}}+K_{{\mathbb{F}_{3,2}}}-(C_0+3f)|$ contracts  $(-1)$ curves $\tilde{f}_i$ and the image curve $\psi ({\widehat{C}})\subset\PP^5$ 
acquires singularities.
Since the complete linear system $\mathrm{Im}(\rho)$ maps ${\widehat{C}}$ onto $\psi ({\widehat{C}})\subset\PP^5$ with at least $2$ singular points, 
$\h{M}\otimes\h{O}_{{\widehat{C}}}=|K_{{\widehat{C}}}(-1)|$ is not very ample.
\vni
{\bf Conclution:} Part (B) shows that there is no component of $\HL{2r+6,r+11,r}$ whose general element lies on a rational normal cone and this finishes the proof.
\end{proof}

\vni
We would like to push forward one step further to determine the reducibility of $\HL{2r+6,r+11,r}$ for $r=7$. In this case, the curve $\ce\subset\PP^4$ may not be nearly extremal; $\pi_1(14,4)=18=r+11=g$. Therefore we cannot use a similar method which we used in Theorem \ref{2r+6}.  We instead estimate the dimension
of a component containing a certain (natural) irreducible family of curves by computing the normal bundle corresponding to a general element of the family. This is the point where a certain subtlety arises.
One drawback of this method is that we cannot identify all the irreducible components as completely as we did before, unless we carry out lengthy and cumbersome analysis of all the possibilities with our rather imperfect techniques. 
\vni

\begin{prop}\label{r+117}$\HL{2r+6,r+11,r}, r=7$ is reducible.\end{prop}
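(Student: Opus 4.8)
The plan is to produce two irreducible families of smooth, linearly normal curves of degree $d=2r+6=20$ and genus $g=r+11=18$ in $\PP^7$ that lie in distinct components, which forces reducibility. As in Theorem~\ref{2r+6}, for a component $\h{G}\subset\w{\h{G}}_{\h{L}}\subset\h{G}^7_{20}$ and general $\h{D}\in\h{G}$ I set $\h{E}=\h{D}^\vee=g^4_{14}$; by Lemma~\ref{easylemma1} this $\h{E}$ is base-point-free and birationally very ample, so its image $\ce\subset\PP^4$ has $(\deg,p_a)=(14,18)$. The decisive difference from Theorem~\ref{2r+6} is that now $g=18=\pi_1(14,4)$ is \emph{equal} to the second Castelnuovo bound rather than strictly below it; hence $\ce$ need not sit on a surface of minimal degree. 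It may lie either on a cubic scroll or on a quartic del Pezzo surface, and these two alternatives give rise to the two families below. For reference the expected dimension is $\Xx(20,18,7)=\lambda(20,18,7)+\dim\Aut(\PP^7)=29+63=92$, so both families will be superabundant.

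First I would record the family $\h{H}_1$ whose residual curve lies on a smooth cubic scroll; this is exactly the continuation to $r=7$ of the construction producing $\h{G}_{\h{M}_1,11-r}$ in Theorem~\ref{2r+6}(i). With $\delta=11-r=4$, a general $\ce\in\Sigma_{\h{M}_1,4}\subset|5H-L|$ has strict transform $\widetilde{C}_{\Ee}\in(9;4,2^4)$ on $\PP^2_5$, and the residual series $|K_{\PP^2_5}+\widetilde{C}_{\Ee}-\w{H}|=(4;2,1^4)$ is very ample (the admissible range $4\le r\le 11$ noted there includes $r=7$). Thus $\h{H}_1$ consists of smooth linearly normal curves $C\in(9;4,2^4)$ on the octic surface $S=\PP^2_5$ embedded in $\PP^7$ by $(4;2,1^4)$, and $\dim\h{H}_1=\dim\Sigma_{\h{M}_1,4}-\dim\Aut(\mathbb{F}_1)+\dim\Aut(\PP^7)=40-6+63=97$. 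By the \csi a general member of $\h{H}_1$ is $5$-gonal.

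Next I would build the family $\h{H}_2$ whose residual curve lies on a quartic del Pezzo surface $\PP^2_5$ (embedded in $\PP^4$ by $(3;1^5)$). Imposing $\deg\ce=14$ and $p_a(\ce)=18$ and running the Cauchy--Schwarz argument of Proposition~\ref{g=r+9}(v) gives the class $\ce\in(10;4,3^4)$. By adjunction the residual series $\h{E}^\vee=|K_{\ce}-\h{E}|$ is cut out on $\ce$ by $(4;2,1^4)$, which is very ample on $\PP^2_5$; hence $\h{E}^\vee$ is very ample and $\h{H}_2$ consists of smooth linearly normal curves $C\in(10;4,3^4)$ on the \emph{same} octic surface $S=\PP^2_5\subset\PP^7$. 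Counting the moduli of $S$ (the $63$ dimensions of $\Aut(\PP^7)$ together with the $2$ moduli of the five points) and the system $|(10;4,3^4)|$ gives $\dim\h{H}_2=65+31=96$; a general member is $6$-gonal, the minimal pencil being cut by lines through the quadruple point.

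It remains to show $\h{H}_2$ is dense in a true component of dimension $96$; since any component meeting $\h{H}_1$ has dimension $\ge 97>96$, reducibility follows at once (equivalently, a $5$-gonal and a $6$-gonal family cannot share an irreducible component, by semicontinuity of gonality). For this I would verify $h^0(C,N_{C|\PP^7})=96$ for general $C\in\h{H}_2$ via $0\to N_{C|S}\to N_{C|\PP^7}\to N_{S|\PP^7}|_C\to 0$. The surface terms are routine: from $0\to\h{O}_S\to\h{O}_S(C)\to N_{C|S}\to 0$ one gets $h^0(N_{C|S})=h^0(\h{O}_S(C))-1=31$ and $h^1(N_{C|S})=0$, so it suffices to prove $h^0(N_{S|\PP^7}|_C)=65$. \textbf{This is the main obstacle.} Unlike the del Pezzo quartic of Proposition~\ref{d13r4}, the octic $S\subset\PP^7$ is \emph{not} a complete intersection, so $N_{S|\PP^7}$ does not split. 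I would instead use the restriction sequence $0\to N_{S|\PP^7}(-C)\to N_{S|\PP^7}\to N_{S|\PP^7}|_C\to 0$ together with $h^0(N_{S|\PP^7})=65$ and $h^1(N_{S|\PP^7})=0$ (the surface deforms in a smooth $65$-dimensional family), the conclusion hinging on the vanishing $H^0(N_{S|\PP^7}(-C))=H^1(N_{S|\PP^7}(-C))=0$. This vanishing I would in turn reduce, through the tangent sequence $0\to T_S(-C)\to T_{\PP^7}|_S(-C)\to N_{S|\PP^7}(-C)\to 0$ and the twisted Euler sequence of $\PP^7$, to cohomology vanishings for line bundles and for $T_S$ on $\PP^2_5$; for instance $H^0(T_{\PP^7}|_S(-C))=0$ because both $H^0(S,(4;2,1^4)-(10;4,3^4))$ and $H^1(S,\h{O}_S(-C))$ vanish. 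Granting the remaining $T_S$-vanishings, $h^0(C,N_{C|\PP^7})=31+65=96=\dim\h{H}_2$, so $\overline{\h{H}_2}$ is a component of dimension $96$, distinct from the component carrying $\h{H}_1$, and $\HL{20,18,7}$ is reducible.
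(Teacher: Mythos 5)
Your overall architecture is the same as the paper's proof: the same two families (nodal curves in $\Sigma_{\h{M}_1,4}\subset|5H-L|$ on the cubic scroll, a $5$-gonal family of dimension $97$ at the Hilbert-scheme level, and smooth curves $\ce\in(10;4,3^4)$ on a del Pezzo, a $6$-gonal family of dimension $96$), the same numerics, and the same closing logic — prove the del Pezzo family is dense in a component of dimension $96$, which therefore cannot contain the $97$-dimensional family. The problem is that you have not proved the one statement on which everything rests. You attempt the tangent-space bound $h^0(C,N_{C|\PP^7})=96$ directly in $\PP^7$, which forces you to control the cohomology of $N_{S|\PP^7}$ for the octic surface $S=\PP^2_5\hookrightarrow\PP^7$; since this $S$ is not a complete intersection you need $h^0(N_{S|\PP^7})=65$, $h^1(N_{S|\PP^7})=0$ and $H^0(N_{S|\PP^7}(-C))=H^1(N_{S|\PP^7}(-C))=0$, and you explicitly concede these points ("granting the remaining $T_S$-vanishings"). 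These are not routine: since $c_1(N_{S|\PP^7})=8H+K_S=(29;15,7^4)$, Riemann--Roch on $C$ gives $\chi(N_{S|\PP^7}\otimes\h{O}_C)=146+5(1-18)=61$, so your target $h^0(N_{S|\PP^7}\otimes\h{O}_C)=65$ forces $h^1(N_{S|\PP^7}\otimes\h{O}_C)=4\neq 0$. In particular the answer cannot fall out of vanishing theorems alone; the sequences you list will require an actual computation of terms like $H^2(N_{S|\PP^7}(-C))$ and $H^2(T_S(-C))$, which need not vanish. As written, the proof is incomplete exactly at its crux.

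The paper sidesteps this entirely by working on the residual side, which is the whole point of organizing the study by index of speciality: for $\ce\in(10;4,3^4)$ on the \emph{quartic} del Pezzo $S\subset\PP^4$, the surface is a complete intersection of two quadrics, so $N_{S|\PP^4}=\h{O}_S(2)\oplus\h{O}_S(2)$ splits, and one computes elementarily $h^0(\ce,N_{\ce|S})=31$ and $h^0(\ce,N_{S|\PP^4}\otimes\h{O}_\ce)=2h^0(\h{O}_\ce(2))=26$ (using $h^0(K_\ce\otimes\h{O}_\ce(-2))=2$, cut out by lines through the $4$-fold point), hence $h^0(\ce,N_{\ce|\PP^4})=57$; the bound is then transferred to $\h{G}^7_{20}$ by residuation, $\dim\w{\h{F}}_2=\dim\w{\h{F}}_2^{\vee}\le 57-\dim\Aut(\PP^4)=33$, i.e. $96$ at the Hilbert-scheme level. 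To complete your argument you should either redo the tangent-space estimate this way, or genuinely supply the cohomology of $N_{S|\PP^7}$. Two further slips, neither fatal to reducibility but both wrong as stated: Lemma \ref{easylemma1} does not give base-point-freeness of $\h{E}$ — since $\pi(13,4)=18=g$ a base point is genuinely possible, and the paper's case with $\deg\Delta=1$ produces a third family $\h{F}_0$ of dimension $33$ via projection from a point of a curve in $\PP^8$; and your parenthetical that the dimension argument is "equivalent" to semicontinuity of gonality is false — a $5$-gonal and a $6$-gonal family could a priori both lie inside a larger component whose general member has gonality at least $6$, which is precisely why the normal-bundle computation is indispensable (the paper makes this exact point in the footnote to Proposition \ref{g=r+9}(v)).
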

\begin{proof} We retain almost all the notations which were used throughout this section.  Choose a 
general $\h{D}\in\h{G}\subset\w{\h{G}}\subset\h{G}^r_{2r+6}$ and set $\h{E}=\h{D}^\vee=g^4_{14}$. 
\vni
{\bf Claim:} $\h{E}$ is birationally very ample with possibly non-empty base locus $\Delta$ such that $0\le\deg\Delta\le 1$. If $\h{E}$ is compounded inducing $C\stackrel{\eta}{\rightarrow} E$, we  apply Clifford's theorem to the complete linear system $\w{\h{E}}$ on the base curve $E$ such that   $\eta^*{\w{\h{E}}}=|\h{E}-\Delta|$. By Lemma \ref{easylemma1}, $\Dd=\h{E}^\vee$ is not very ample, a contradiction. 
If $\deg\Delta\ge 2$,  we have $\pi(12,4)=15<g=r+11=18$. Hence $\deg\Delta\le 1$ and $\h{E}$ is birationally very ample. 

  \vni
{\bf (A) $\deg\Delta=1$:} Set $\h{E}':=|\h{E}-\Delta|=g^4_{13}$. Since $\pi(13,4)=18=g=r+11$,  the curve $C_{\h{E}'}\subset\PP^4$ induced by $\h{E}'$ is an extremal (smooth) curve lying on
 a cubic surface $S\subset\PP^4$.  We may assume that $S$ is smooth by Remark \ref{specialization}.
  By solving \eqref{scrolldegree}, \eqref{scrollgenus}, we have either $C_{\h{E}'}\in|5H-2L|$ or $C_{\h{E}'}\in|4H+L|$.   
 Noting that $C_{\Ee'}\cong C$, we have $$|K_{C_{\h{E}'}}-\h{E}'-\Delta|=|K_{C_{\h{E}'}}-\h{E}|=|K_C-\h{E}|=\h{D},~~ |K_{C_{\h{E}'}}-\h{E}'|=g^8_{21}.$$ Thus our $C\subset\PP^7$ with $(d,g)=(20,18)$ is the image of the projection with center at $\Delta$
  from the (smooth) image curve of the morphism induced by $|K_{C_{\h{E}'}}-\h{E}'|$. 
  
  \vni 
  {\bf (A-1)} $C_{\h{E}'}\in|5H-2L|$: We have $|K_S+C_{\h{E}'}-H|=|2H-L|=|3l-e|$; cf.   \eqref{picid}.
  Let $T\subset\PP^8$ be the image of the embedding $S\stackrel{\phi}\longrightarrow T\subset\PP^8$ induced by $|2H-L|$ and set 
 $C'=\phi(C_{\Ee'})\subset T$.  Note that $T$ is a smooth del-Pezzo surface of degree $(2H-L)^2=8$. Since there is only finitely many lines on a smooth del-Pezzo of degree $8$, projecting $C'$ into $\PP^7$  from a general point $p\in C'\subset\PP^8$ induces an isomorphism onto a curve $C\subset\PP^7$, $\deg C=20$.
  Set
 $$\h{F}:=\{ ~g^8_{21}~ 
 | ~g^8_{21}=|K_{C_{\h{E}'}}-\h{E}'|,\, \h{E}'=\h{E}\setminus\Delta,\, \h{E}=\h{D}^\vee,\,\h{D}\in\h{G}\}\subset\h{G}^8_{21}.$$
 By construction we have $$\dim\h{F}=\dim|5H-2L|-\dim\Aut(S)=32.$$ Setting
 $\h{F}_0:=\{~|g^8_{21}-p| ~| ~g^8_{21}\in\h{F},~ p\in C'=\phi(C_{\Ee'})\}\subset\Gg^7_{20},$ we have
 $$\dim\h{F}_0=\dim\h{F}+1=33>\lambda(14,18,4)=\lambda(20,18,7)=29.$$
 \vni
 {\bf (A-2)} $C_{\h{E}'}\in|4H+L|$: Both $|4H+L|$ and $|K_S+C_{\h{E}'}-H|=|H+2L|$ are very ample. 
 Under the  isomorphism $S\stackrel{|H+2L|}\longrightarrow T\subset\PP^8$, we have $$\deg T=(H+2L)^2=7$$
 and hence $T$ is a rational normal surface scroll. Since $(H+2L)\cdot L=1$ and $(4H+L)\cdot L=4$, the image in $T$ of the ruling $|L|$ on $S$ cut out a $4$-secant line on the image curve of $C_{\Ee'}$.  Hence the projection with center at $\Delta$, i.e. the morphism induced by $\Dd=\h{E}^\vee=\h{E}'^\vee-\Delta$ is not very ample.
 \vni
 {\bf (B) $\Delta =\emptyset$ :} Since 
 $\pi_1(14,4)=18=r+11=g$, $\ce\subset\PP^4$ may lie either on a cubic surface or on a quartic surface. 
 \vni
 {\bf (B-1)} Assume $\ce$ lies on a smooth cubic scroll $S\subset\PP^4$. We have
 $$g=\pi_1(14,4)=18\le p_a(\ce)\le 22=\pi(14,4).$$  The pairs $(a,b)\in\NN\times\ZZ$
 satisfying \eqref{scrolldegree},\eqref{scrollgenus} for $18\le p_a(\ce)\le 22$ are already listed in \eqref{g=14} (in the proof of Theorem \ref{2r+6}). 
 
\vni 
We take $\ce\in \h{M}_1=|5H-L|$, where $p_a(\ce)=22$, $\delta=\delta(p_a(\ce))=11-r=4$.
The proper transform $\w{C}_{\Ee}\in |(9;4,2^4)|\in \textrm{Pic}(\PP^2_5)$ under the blow up 
$\mathbb{P}^2_{\delta(p_a(\ce))+1}\stackrel{\pi}{\longrightarrow}\mathbb{F}_1\cong S$
is embedded into $\PP^7$ as a curve of degree $d=20$ by the very ample $(4;2,1^4)$; cf. part (A)--(i) in the proof of Theorem \ref{2r+6}. 

\vni
 Therefore the irreducible family $\h{F}_1\subset\h{G}^7_{20}$ arising this way, i.e.,
 $$\Ff_1={\widetilde\Sigma}_{\h{M}_1,11-r}^\vee=\{~|K_{\PP^2_{12-r}}+\widetilde{C}_{\Ee}-\w{H}|_{|{\widetilde{C}_{\Ee}}}~|~\ce\in\Sigma_{\h{M}_1,\delta}\}/\Aut(S)$$
  has dimension;
  \begin{align*}
  \dim\h{F}_1&=\dim{\widetilde\Sigma}_{\h{M}_1,11-r}^\vee=\dim{\Sigma}_{\h{M}_1,11-r}^\vee/\Aut(S)\\&=\dim\Sigma_{\h{M}_1,11-r}-\dim \textrm{Aut}(S)=\dim|5H-L|-\delta-6=34.
  \end{align*}
  \vni In the proof of Theorem \ref{2r+6}, there are several other linear systems $\Mm_i$ ($i=2,3$) such that $\ce\in\Mm_i$. However we already verified that $\ce^\vee\subset\PP^7$ (the residual curve of $\ce$) is not smooth.

 \vni
 {\bf (B-2)} Assume $\ce$ lies on a smooth del Pezzo  surface $S\subset\PP^4$ and let $\ce\in (a;b_1,\cdots,b_5)$. Solving
 $$\deg\ce=3a-\sum^5_{i=1} b_i=14, ~\ce^2=a^2-\sum^5_{i=1} b_i^2=2g-2-K_S\cdot\ce=48$$ we get
 \begin{equation}\label{del}
 (a;b_1,\cdots,b_5)=(10;4,3^4) \textrm{ or } (11,4^4,3).
 \end{equation}
 Note that $(10;4,3^4) \textrm{ and } (11,4^4,3)$ are very ample. Also note that a general member 
 in $(10;4,3^4)$ is isomorphic to a general member in $(11,4^4,3)$ via obvious quadratic transformation. 
 
 \vni Let $\ce\in(10;4,3^4)$. Since 
 $$|{\ce}+K_S-H|=(10;4,3^4)-2(3;1^5)=(4;2,1^4)$$ is very ample, there is another irreducible family 
 $\h{F}_2\subset\h{G}^7_{20}$ whose  residual series $\Dd^\vee=\Ee$ of a general member $\h{D}\in\h{F}_2$ gives rise to a smooth curve $\ce$ on a del Pezzo  surface in $\PP^4$, where $\h{E}^\vee=\h{D}$ induces an embedding into $\PP^r$ with
 $r=\dim|(4;2,1^4)|=7$, i.e.
 $$\Ff_2=\{\,g^7_{20}=\Dd \,|\, \Ee=\Dd^\vee,\,\ce\subset S\subset\PP^4,  S\mathrm{ ~is ~ a ~smooth ~del ~Pezzo} \}.$$

\vni
Since a smooth del Pezzo surface in $\PP^4$ is a complete intersection of two quadric hypersurfaces, which in turn corresponds to an element of the Grassmannian $\GG(1,\PP(H^0(\PP^4,\Oo(2)))$, an easy dimension count yields
$$\dim\h{F}_2=\dim|(10;4,3^4)|+\dim\mathbb{G}(1,14)-\dim\Aut(\PP^4)=33.$$
 
 \vni
 So far we have the following three irreducible families consisting of very ample linear series' inside $\h{G}^7_{20}$ having dimensions greater than $\lambda(20,18,7)=\lambda(14,18,4)=29$.
 
 \vni
 \begin{enumerate}
 \item[(0)] $\h{F}_0$: $\dim\h{F}_0=33$; for $\Dd\in\Ff_0$, $\h{E}=\Dd^\vee$ has non-empty base locus, $C_{\h{E}'}\subset\PP^4$ induced by the moving part $\h{E}'$ of $\h{E}$ is  a smooth curve of degree $13$ lying on a rational normal scroll $S$. $C_{\h{E}'}$ is $5$-gonal. $\h{E}'^\vee$ is very ample; Part (A).

 \item[(1)]  $\h{F}_1$: $\dim\h{F}_1=34$; $\ce\subset\PP^4$ induced by $\h{E}=\Dd^\vee$ is singular lying on a rational normal scroll $S$ and  $\ce$ is $5$-gonal. 
 
 \item[(2)] $\h{F}_2$: $\dim\h{F}_2=33$; $\ce\subset\PP^4$ induced by $\h{E}=\Dd^\vee$ is a smooth curve lying on a smooth del Pezzo  surface in $\PP^4$ and $\ce$ is $6$-gonal.\footnote{$\ce\in (10;4,3^4)$ being $6$-gonal is not totally clear. One may consult \cite[Theorem 2]{Sakai} for a quick check.} 
 
 \end{enumerate}

 
 

 
 \vni
 {\bf Observation:}
 \begin{itemize}
 \item[(i)] Note that $\h{F}_2$ is not in the boundary of $\h{F}_1$; by lower semi-continuity of gonality or by the fact that 
singular curves cannot be specialized to a smooth curve.


 \item[(ii)] A general element of $\Ff_2$ is $6$-gonal. Therefore $\Ff_2$ is not in the
boundary of $\Ff_0$.

 \item[(iii)] Since $\dim\Ff_0=\dim\Ff_2$, $\Ff_0$ is not in the boundary of $\Ff_2$.
\end{itemize}
  \vni 
Suppose that Hilbert scheme $\HL{2r+6,r+11,r}=\HL{20,18,7}$ is irreducible. By the observation,  the only possibility is that all three families $\Ff_0, \h{F}_1, \h{F}_2$ are contained in another irreducible family of dimension strictly bigger than $\dim\h{F}_1$.
 \vni 
 {\bf (C):}
 In what  follows
 we argue that this is impossible by showing that $\h{F}_2$ is 
dense in a  component $\w{\h{F}}_2\subset\h{G}^7_{20}$ and hence $\dim\w{\h{F}}_2=33$.

\vni
Take a general $\Dd\in\Ff_2$ and  set $\Ee=\Dd^\vee$. Let $\ce\in (10;4,3^4)$; cf. \eqref{del}. $\ce\subset S\subset\PP^4$ is smooth where $S$ is a smooth del Pezzo  surface. Let $c\in\HL{14,18,4}$ be the point corresponding to $\ce$ and let $\h{H}$ be a component cotaining $c$.
 We compute 
 $$h^0({\ce},N_{{\ce}|\PP^4})=\dim T_c{\h{H}}$$
 as follows.
 
 
 \vni
We consider the standard exact sequence
 $$0\rightarrow N_{{\ce}|S}\rightarrow N_{{\ce}|\PP^4}\rightarrow N_{{S|\PP^4}_{|{\ce}}}\rightarrow 0.$$
 
 \vni 
Note that 
 $$N_{\ce|S}=\h{O}_S({\ce})_{|{\ce}}=\h{O}_S({\ce})\otimes \h{O}_{\ce}.$$ 
 \vni
 (i) Claim: $h^0({\ce}, N_{{\ce}|S})=31$ and  $h^1({\ce}, N_{{\ce}|S})=0.$ 
 \vni The first part follows from the exact sequence
 $$0\rightarrow\h{O}_S\rightarrow\h{O}_S({\ce})\rightarrow\h{O}_S({\ce})\otimes\h{O}_{\ce}\rightarrow0\,;$$ 
 a routine computation yields $h^0(S, \h{O}_S({\ce}))=32$ and since $S$ is rational, we have
 $h^1(S,\h{O}_S)=0$. 
 Since
 $\deg\h{O}_S({\ce})\otimes\h{O}_{\ce}={\ce}^2=(10;4,3^4)^2=48,$ $\h{O}_S({\ce})\otimes\h{O}_{\ce}$ is non-special and hence $h^1({\ce}, N_{{\ce}|S})=0$.
 
 \vni
 (ii) Claim: $$h^0({\ce}, N_{{S|\PP^4}_{|{\ce}}})=h^0({\ce}, N_{S|\PP^4}\otimes\h{O}_{\ce})=26.$$
 \vni
 Since $S$ is a complete intersection of two quadrics, we have
 $$N_{S|\PP^4}=\h{O}_{S}(2)\oplus\h{O}_{S}(2), $$
 and hence 
 $$N_{S|\PP^4}\otimes\h{O}_{\ce}=(\h{O}_{S}(2)\oplus\h{O}_{S}(2))\otimes\h{O}_{\ce}=\h{O}_{\ce}(2)\oplus\h{O}_{\ce}(2).$$
 By Riemann-Roch,  $$h^0({\ce}, \h{O}(2))=28-18+1+h^0(K_{\ce}\otimes\h{O}_{\ce}(-2))
 .$$ Note that
 \begin{equation}\label{4fold}
 K_S\otimes\h{O}_S({\ce})\otimes\h{O}_S(-2)=|(10;4,3^4)-3(3;1^5)|=|(1;1,0^4)|=|l-e_1|
 \end{equation}
 and by adjunction we have
 $$(K_S\otimes\h{O}_S({\ce})\otimes\h{O}_S(-2))\otimes\h{O}_{\ce}=K_{\ce}\otimes\h{O}_{\ce}(-2).
 $$ The restriction map $$\rho: H^0(S,K_S\otimes\h{O}_S({\ce})\otimes\h{O}_S(-2))\rightarrow H^0({\ce}, K_{\ce}\otimes\h{O}_{\ce}(-2))$$ is surjective since $h^1(S,K_S\otimes\Oo_S(-2))=h^1(S, \Oo(-3(3;1^5)))=0$. By \eqref{4fold}, we see that $|K_{\ce}\otimes\h{O}_{\ce}(-2)|$ is cut out on ${\ce}$ by lines through the ordinary four-fold point on the
 plane model of ${\ce}$, thus we have $$h^0(K_{\ce}\otimes\h{O}_{\ce}(-2))=2.$$
 Hence  $h^0({\ce},\h{O}_{\ce}(2))=13$ and the claim follows. 
 
 \vni
 (iii) By Claims (i) and (ii) we have $$h^0({\ce}, N_{{\ce}|\PP^4})=h^0({\ce}, N_{S|\PP^4}\otimes\h{O}_{\ce})+h^0({\ce}, N_{{\ce}|S})=57,$$ and hence
$$\dim\h{H}\le h^0({\ce}, N_{{\ce}|\PP^4})=57.$$
By duality it follows that 
\begin{align*}
\dim{\w{\h{F}}_2}&=\dim{\w{\h{F}}_2}^\vee= \dim\Hh-\dim\Aut(\PP^4)\\&\le h^0({\ce}, N_{{\ce}|\PP^4})-\dim\Aut(\PP^4)=33=\dim\h{F}_2,
\end{align*}
where $\w{\h{F}}_2\subset\h{G}_{\h{L}}\subset\h{G}^7_{20}$ is the component containing the
 irreducible family  $\h{F}_2$.

\vni
{\bf Conclusion:} ${\w{\Ff}_2}$ is an irreducible component of $\w{\Gg}_\Ll\subset\Gg^7_{20}$ different from the component containing the irreducible family $\Ff_1$, from which the reducibility of $\HL{2r+6,r+11,r}$ for $r=7$ follows. 
\end{proof}
\vni
Leaving the unsettled cases $3\le r\le 6$ behind concerning the irreducibility of $\Hh_{2r+6,r+11,r}$, 
we close this section with a further reducibility result for the case $g=r+12$. Note that for $g=r+12$, $\HL{g+r-5,g,r}\neq\emptyset$ for every $r\ge3$. 
Compared with our previous irreducibility results, the following result is more descriptive and extensive in a certain sense, if not better.
\begin{thm}\label{g=r+12} $\HL{g+r-5,g, r}$, $g=r+12$ is reducible for $9\le r\le 14$. For $r\ge 15$, 
there is a one-to-one correspondence between components of $\HL{2r+7,r+12, r}$ and components of $\h{X}_{3,1}$, the Hurwitz space of triple coverings of elliptic curves. 
\end{thm}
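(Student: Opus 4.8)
The plan is to analyze, for a general $\h{D}$ in a component $\h{G}\subset\w{\h{G}}_\h{L}\subset\h{G}^r_{2r+7}$, the residual series $\h{E}:=\h{D}^\vee=g^4_{15}$ (note $2g-2-d=2(r+12)-2-(2r+7)=15$) and the associated residual curve $\ce\subset\PP^4$, exactly in the style of Theorems \ref{r+10} and \ref{2r+6}. Everything is governed by the two Castelnuovo bounds $\pi_1(15,4)=21$ and $\pi(15,4)=26$: since $g=r+12$, the residual curve is forced to be compounded precisely when $g>26$, that is $r\ge 15$, whereas for $9\le r\le 14$ one has $21=\pi_1(15,4)\le g\le\pi(15,4)=26$, so $\ce$ is extremal or nearly extremal and lies on a surface of minimal degree (a cubic scroll), or, in the boundary case $r=9$ where $g=\pi_1(15,4)$, on a quartic del Pezzo surface. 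Recall that components of $\HL{2r+7,r+12,r}$ correspond to components of $\w{\h{G}}_\h{L}$, so it suffices to argue at the level of $\w{\h{G}}_\h{L}$.

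For the correspondence ($r\ge 15$) I would first observe that $g=r+12\ge 27>\pi(15,4)$ forbids $\h{E}$ from being birationally very ample, so $\h{E}=\h{E}'+\Delta$ is compounded, with $\h{E}'$ inducing a degree-$k$ map $\eta\colon C\to E\subset\PP^4$ onto a curve of degree $f$, where $kf+\deg\Delta=15$, $k\ge 2$, $f\ge 4$. Running over the finitely many triples $(k,f,\deg\Delta)$, I would discard all but one: a double cover ($k=2$) onto a rational $E$ makes $C$ hyperelliptic and carries no special very ample series, while a double cover onto a positive-genus $E$ has nonspecial pulled-back series (its degree exceeds $2h-2$), so $\h{E}^\vee=\h{D}$ is not very ample by Lemma \ref{easylemma1}(i); the triple cover onto a rational normal quartic ($k=3$, $f=4$, $\deg\Delta=3$) is excluded by Lemma \ref{easylemma1}(ii); and the triple cover onto a rational quintic ($k=3$, $f=5$, $\deg\Delta=0$) cannot occur since $\h{E}$ would then be a proper subseries of the complete $|5g^1_3|$ of dimension $5$, contradicting completeness of $\h{E}$. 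The sole survivor is $k=3$, $f=5$, $\deg\Delta=0$ with $E$ an elliptic normal quintic, whence $\h{E}=\eta^*(g^4_5)$ for a triple covering $\eta\colon C\to E$ of an elliptic curve with $g^4_5\in W^4_5(E)=\Pic^5(E)$; conversely, as $g\ge 22$, Lemma \ref{triple0} shows every such $\h{E}^\vee$ is very ample and complete.

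To produce the bijection I would form over $\h{X}_{3,1}$ the family $\mathcal{P}$ whose fibre over $[\eta\colon C\to E]$ is $\Pic^5(E)\cong E$; since the fibres are irreducible, $\mathcal{P}$ and $\h{X}_{3,1}$ have the same components. The rule $(\eta,L)\mapsto(C,|K_C-\eta^*L|)$ defines a rational map $\mathcal{P}\dashrightarrow\w{\h{G}}_\h{L}$ which is dominant onto every component by the elimination above, and generically injective because a general $\h{D}$ recovers $\h{E}=\h{D}^\vee$, then its compounding triple cover $\eta$, and hence the point $[\eta]\in\h{X}_{3,1}$; this recovery furnishes the inverse on components. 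A dominant, generically injective map induces a bijection on irreducible components, which is the assertion. I expect the crux to be the generic uniqueness of the triple-cover structure $\eta$; I would establish it via the \csi, which forbids a second, independent degree-$3$ map to a genus-$1$ curve on a general member.

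For reducibility ($9\le r\le 14$) I would exhibit two distinct irreducible families in $\w{\h{G}}_\h{L}$. When $10\le r\le 14$ (so $g\ge 22$), Corollary \ref{triple1} yields the triple-covering family $\h{G}_{\mathrm{tri}}$, of dimension $2g=2r+24$, while $g\le 26$ allows the residual to be nearly extremal on a cubic scroll; the Severi-variety method of Theorems \ref{r+10} and \ref{2r+6} then produces a birationally-very-ample family $\h{G}_{\mathrm{bva}}$ of dimension at most $31+r$. These are different components: $\h{G}_{\mathrm{bva}}\not\subset\overline{\h{G}_{\mathrm{tri}}}$ because birational very ampleness of the residual is an open condition whereas $\overline{\h{G}_{\mathrm{tri}}}$ is made of compounded residual series, and $\h{G}_{\mathrm{tri}}\not\subset\overline{\h{G}_{\mathrm{bva}}}$ by the comparison $2r+24>31+r$. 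In the boundary case $r=9$ ($g=21=\pi_1(15,4)$) the residual lies on a cubic scroll in the extremal class $|7H-6L|$ or on a quartic del Pezzo with $\ce\in(10;3^5)$, and I would separate the two resulting families by a normal-bundle and dimension computation in the spirit of Propositions \ref{g=r+9}, \ref{d13r4} and \ref{r+117}. The chief difficulty in this part is precisely the separation of components: it is clean by dimension for $10\le r\le 14$, but $r=9$ forces the more laborious normal-bundle estimate.
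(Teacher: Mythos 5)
Your handling of $r\ge 15$ and of $10\le r\le 14$ is essentially the paper's own proof. For $r\ge 15$ the paper also reduces to the residual $\h{E}=\h{D}^\vee=g^4_{15}$, notes $g>\pi(15,4)=26$ forces $\h{E}$ to be compounded, identifies the only surviving structure as a triple cover of an elliptic quintic, and concludes that components of $\HL{2r+7,r+12,r}$ match components of $\h{X}_{3,1}$ (your extra care with base loci, and the Castelnuovo-Severi uniqueness of the triple-cover structure, only flesh out what the paper leaves terse). For $10\le r\le 14$ the paper likewise plays the triple-cover family $\h{G}_{3,1}$ (its dimension is $2g-1=2r+23$, not $2g$ as you wrote -- harmless) against the Severi-variety family coming from nodal curves $\ce\in\Sigma_{|5H|,14-r}$ on the cubic scroll, of dimension $30+r$, and separates them exactly as you do: birationally very ample residuals are not specializations of compounded ones, and the dimension inequality rules out the other inclusion.

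The genuine gap is your case $r=9$. You take, as the scroll-side family, the \emph{smooth} curves $\ce\in|7H-6L|$ (these are not extremal, by the way: they have $g=\pi_1(15,4)=21<\pi(15,4)$). This family contributes nothing to $\HL{25,21,9}$, because the residual of its hyperplane series is not even base-point-free. Indeed, under \eqref{picid} one has $7H-6L=8l-e$, so a smooth member is isomorphic, via the blow-down $\PP^2_1\to\PP^2$, to a smooth plane octic $C_0$ through the blown-up point $p_0$, with hyperplane series $\h{E}=\Oo_{C_0}(2)(-p_0)$ and residual $\h{D}=|K_{C_0}\otimes\h{E}^{-1}|=|\Oo_{C_0}(3)(p_0)|$. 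Since $|\Oo_{C_0}(2)|$ is base-point-free, $h^0(\Oo_{C_0}(2)(-p_0))=5$, and Riemann-Roch gives $h^0(\Oo_{C_0}(3)(p_0))=5+h^0(\Oo_{C_0}(2)(-p_0))=10=h^0(\Oo_{C_0}(3))$; hence every divisor of $\h{D}$ contains $p_0$, i.e.\ $p_0$ is a base point of $\h{D}$ (equivalently, $\dim|\h{E}+p_0|>\dim|\h{E}|$). So with your choice you are left with a single honest family (the del Pezzo one) and no reducibility. What the paper does for $r=9$ is to keep the \emph{nodal} Severi family $\ce\in\Sigma_{|5H|,5}$ (geometric genus $21$, five nodes), whose residual $(5;3,1^5)$ on $\PP^2_6$ is very ample; this gives a $39$-dimensional family, which is then separated from the $37$-dimensional del Pezzo family $(10;3^5)$ precisely by the normal-bundle computation you anticipate ($h^0(N_{C|\PP^4})=61$, so the del Pezzo family is dense in a $37$-dimensional component) together with gonality. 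The fix is thus inside your own toolkit -- extend your $10\le r\le 14$ Severi-variety family down to $r=9$ -- but as written your $r=9$ argument fails; note also that your stated dichotomy for $r=9$ is not exhaustive (nodal curves in $|5H|$, $|6H-3L|$, $|4H+3L|$ and curves on other quartic surfaces also have geometric genus $21$), though for reducibility one only needs two genuine components.
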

\begin{proof} We present only essential ingredients of proofs, avoiding repetitions of arguments similar to those we used so far.
\vni
For a general $\h{D}\in\h{G}\subset\widetilde{\h{G}}_\h{L}\subset\h{G}^r_{2r+7}$, we assume 
$\h{E}=\h{D}^\vee=g^4_{15}=g^4_e$ is base-point-free.\footnote{Here we do not consider the possibility for $\Ee$ having non-empty base locus, mainly because determining the irreducibility is our main concern.} We distinguish two cases.
\begin{enumerate}
\item[(i)] $\h{E}$ is birationally very ample. Note that if $$\pi_1(e,4)=21\lneq g=r+12\le\pi (e,4)=26 \Longleftrightarrow 10\le r\le 14$$
the image curve $C_\h{E}\subset\PP^4$ induced by
$\h{E}$ lies on a cubic surface $S\subset\PP^4$. Therefore, as we did in the case $r+10\le g\le r+11$, we expect that the the irreducibility (or reducibility in some cases) would 
follow by focusing on (possibly singular) curves lying on a scroll and studying the Severi variety of nodal curves on $S$. 
\item[(ii)] $\h{E}$ is compounded. Since $\h{E}$ have empty base locus, $\Ee$  induces a triple covering $C\stackrel{\phi}{\rightarrow} E\subset\PP^4$ onto a smooth elliptic curve $E$, i,e, $\h{E}=\phi^*(g^4_5)$.   Recall that $\h{E}^\vee$ is very ample if $r\ge 10$ by Lemma \ref{triple0}.
\end{enumerate}
In the range $10\le r\le 14$, we will show that  there exists at least one component for which $\h{E}$ birationally very ample.  On the other hand, on a triple covering $C\stackrel{\phi}{\rightarrow} E$ of an elliptic curve $E$,
the residual series of $\phi^*(g^4_5)$ is very ample. Hence there are at least two candidates of irreducible families which may form components in this range $10\le r\le 14$.

\vni
 For $r\ge 15$, $\Ee$  is compounded by Castelnuovo genus bound and hence $\Ee$ only induces a triple covering of an elliptic curve. Consequently, there corresponds an irreducible family of smooth curves with $(d,g,r)=(2r+7,r+12,r)$ over each irreducible component of the Hurwitz space $\h{X}_{3,1}\subset\h{M}_g$, whose irreducibility is not known\footnote{ to the author}. Therefore if $r\ge 15$, the Hilbert scheme $\HL{2r+7,r+12,,r}$ has the same number of components  as $\h{X}_{3,1}$.

\vni
Suppose $\h{E}$ is birationally very ample.  Assume that $p_a(\ce)=\pi(15,4)=26$. Solving
(\ref{scrolldegree}), (\ref{scrollgenus}) we have
$$\ce\in\Nn_1:=|5H|=|5(e+2f)|=|10f+5e|=|10l-5e|=(10;5).$$ 
\vni
Let ${\w{C}_{\Ee}}$ be the proper transformation of $\ce\subset S$ under the  blowing up $S\cong\PP^2_1$ at 
$$\delta(p_a(\ce)):=p_a(\ce)-g=p_a(\ce)-(r+12)=14-r$$
nodal points. On $\PP^2_{15-r}$ we have 
$${\w{C}_{\Ee}}\in (10,5,2^{14-r})$$ 
and 
\begin{align}\label{edual}
\Ee^\vee&=|K_{\PP^2_{15-r}}+{\w{C}_{\Ee}}-\w{H}|_{|{\w{C}_{\Ee}}}\\&=|(-3;-1^{15-r})+(10;5,2^{14-r})-(2,1, 0^{14-r})|_{|{\w{C}_{\Ee}}}=(5;3,1^{14-r})_{|{\w{C}_{\Ee}}}\nonumber
\end{align}
which is very ample by \cite{sandra}.
We also have
$${\w{C}_{\Ee}}\cdot \Ee^\vee=(5;3,1^{14-r})\cdot (10,5,2^{14-r})=2r+7.$$ 
Therefore we have 
an irreducible family of nodal curves on the Hirzebruch surface $\mathbb{F}_1$ -- parametrized by the irreducible Severi variety $\Sigma_{\h{N}_1,14-r}$ -- whose
residual curve  (i.e. the curve induced by $\h{E}^\vee$) is a smooth, linearly normal curve with $(d,g)=(2r+7, r+12)$ in $\PP^r$.  
Set 
\begin{align*}
&\h{G}_{\Nn_1}:=\\
&\{~\h{E}^\vee~|~ \h{E}=g^4_{15} ~\textrm{is birationally very ample}, \ce\in \Sigma_{\h{N}_1,14-r}, \,p_a(\ce)=\pi(15,4)\}\\
& \hskip 20pt\subset\h{G}\subset\w{\h{G}}_\h{L}\subset \h{G}^r_{2r+7}.
\end{align*}
By our preceding discussion together with a usual dimension count, 
\begin{align}\label{Fdim}
\dim\h{G}_{\Nn_1}&=\dim\Sigma_{\Nn_1,14-r}-\dim\Aut(S)=30+r\nonumber\\&>\lambda(2r+7,r+12,r)=40-r.
\end{align}


\vni  In case $\h{E}$ is compounded, we set
$$\h{G}_{3,1}:=\{ ~\Ee^\vee ~| ~\Ee=\phi^*(g^4_5), ~C\stackrel{\phi}{\rightarrow} E,~ \deg\phi =3, E\mathrm{~is ~elliptic} \}\subset\w{\h{G}}_\h{L}\subset\h{G}^r_{2r+7}.$$
We have\footnote{ The Hurwitz space $\Xx_{n,\gamma}\subset\Mm_g$  which is the locus of  smooth curves of genus $g$ which are degree $n$ ramified coverings of smooth genus $\gamma$ curves is of pure dimension $2g+(2n-3)(1-\gamma)-2$; cf. \cite[Theorem 8.23, p. 828]{ACGH2}.}
$$\dim\h{G}_{3,1}=\dim\h{X}_{3,1}+\dim W^4_5(E)=\dim\h{X}_{3,1}+\dim J(E)=2g-1=2r+23. $$

\vni
Recall that in the range $10\le r\le 14$, we have either 
$$\Dd=\Ee^\vee\in \h{G}_{\Nn_1} \textrm{ or }  \Dd=\Ee^\vee\in \Gg_{3,1}.$$ We also have
$\dim\h{G}_{\Nn_1}<\dim\Gg_{3,1}$. By recalling that a birationally very ample linear series is not a specialization of compounded linear series', $\h{G}_{\Nn_1}$ and each components of $\Gg_{3,1}$ are distinct components of $\w{\h{G}}_\h{L}\subset \h{G}^r_{2r+7}$.
\vni If $r=9$, there is no guarantee that there may exist irreducible families of smooth curves with
$(d,g)=(2r+7,r+12)$ sitting over $\Xx_{3,1}\subset\Mm_g$. Furthermore, $\ce\subset\PP^4$ 
may not be lie only in a surface of minimal degree; $\pi_1(e,4)=21=g=r+12$ if $r=9$. Instead, $\ce$ may lie on a quartic surface unlike the cases $10\le r\le 14$. However we may overcome this 
 situation as follows, which we only give outlines. 
\begin{itemize} \item We assume $\Ee=\Dd^\vee$ is base point free and birationally very ample.
\item If $\ce\subset\PP^4$ lies on a smooth cubic scroll $S$, we retain the notation $\Nn_1:=|5H|$. 
\item In this case we may assume $\ce$ is nodal and is general in
 $\Sigma_{\h{N}_{1,14-r}}$. One easily checks $\Ee^\vee$ is very ample as we did in \eqref{edual}.
Set $$\Ff:=\Sigma_{\h{N}_{1,14-r}}/\Aut(S)\subset\Gg^4_{15}.$$
\item By \eqref{Fdim}, $\lambda(d,g,r)=31<30+r=39=\dim\Ff$, hence the residual series' of the corresponding family $\Ff$
may well contribute to a component of $\HL{2r+7,r+12,r}$.

\item To show that there exists another component different from  $\Ff^\vee$, we 
seek for a suitable irreducible family of curves on  smooth del Pezzo in $\PP^4$. 

\item
 Assume $\ce$ lies on a smooth del Pezzo  surface $S\subset\PP^4$ and set $\ce\in (a;b_1,\cdots,b_5)$. Solving the equations
 $$\deg\ce=3a-\sum^5_{i=1} b_i=15, ~\ce^2=a^2-\sum^5_{i=1} b_i^2=2g-2-K_S\cdot\ce=55$$ we get
 $$(a;b_1,\cdots,b_5)=(10;3^5) \textrm{ or } (11,4^3,3^2), (12; 5,4^4).$$
 Note that all the above three linear systems are very ample. Furthermore,  a general member 
 in one of three is isomorphic to a general member of others via obvious quadratic transformations. 
 
 \item Let $\ce\in(10;3^5)$. There is an irreducible family 
 $\h{F}_1\subset\h{G}^4_{15}$ consisting of hyperplane series $\Ee$ of $\ce\subset\PP^2_5\cong S\subset\PP^4$.
 Since $$|{\ce}+K_S-H|=(10;3^5)-2(3;1^5)=(4;1^5)$$ is very ample, 
 $\h{E}^\vee$ induces an embedding into $\PP^r$ where
 $r=\dim(4;1^5)=9$.
\item
A naive dimension count yields
$$\dim\h{F}_1=\dim(10;3^5)+\dim\mathbb{G}(1,14)-\dim\Aut(\PP^4)=37.$$
 
 \vni
 \item So far we have the following two irreducible families inside $\h{G}^4_{15}$ which have dimension greater than $\lambda(15,21,4)=\lambda(25,21,9)=31$.
 
 \vni
 \begin{enumerate}
 \item[(1)]  $\h{F}$: $\dim\h{F}=39$, $\ce\subset\PP^4$ induced by $\h{E}\in\h{F}$ is singular lying on a rational normal scroll $S$ and  $\ce$ is $5$-gonal. $\h{E}^\vee$ is very ample for a general $\h{E}\in\h{F}$.
 \item[(2)] $\h{F}_1$: $\dim\h{F}_2=37$, $\ce\subset\PP^4$ induced by $\h{E}\in\h{F}_1$ is smooth lying on a smooth del Pezzo  surface and $\ce$ is $7$-gonal.\footnote{ $\ce\in (10;3^5)$ being $7$-gonal is not totally clear. One may consult \cite[Theorem 2, 3]{Sakai}.} $\h{E}^\vee$ is very ample for a general $\h{E}\in\h{F}_1$.
 \end{enumerate}

 
 

 
  \vni 
  \item
If the Hilbert scheme $\HL{2r+7,r+12,r}=\HL{25,21,9}$ is irreducible, then both $\h{F}, \h{F}_1$ would be contained in another irreducible family of dimension strictly bigger than $\dim\h{F}=39$.
 \vni 
\item We may argue that this is not the case by showing that $\h{F}_1$ is 
dense in a  component $\tilde{\h{F}_1}\subset\h{G}^4_{15}$ and hence $\dim\tilde{\h{F}_1}=37$.

\vni
\item
Let $C:=\ce\subset S\subset\PP^4$, where $C\in (10;3^5)$ and $S$ is a smooth del Pezzo  surface in $\PP^4$. Let $c\in\HL{15,21,4}$ be a point corresponding to $C$ and let $\h{H}$ be a component containing $c$.
 We compute 
 $$h^0(C,N_{C|\PP^4})=\dim T_c{\h{H}}.$$
 
 
 
 \vni
\item From the the standard exact sequence
 $$0\rightarrow N_{C|S}\rightarrow N_{C|\PP^4}\rightarrow N_{{S|\PP^4}_{|C}}\rightarrow 0, $$
 it is easy to check
\item (i) Claim: $h^0(C, N_{C|S})=35$ and  $h^1(C, N_{C|S})=0.$ 
 \vni
 (ii) Claim: 
 \begin{align*}
 h^0(C, N_{{S|\PP^4}_{|C}})&=h^0(C, N_{S|\PP^4}\otimes\h{O}_C)\\
 &=h^0(C,(\h{O}_{S}(2)\oplus\h{O}_{S}(2))\otimes\h{O}_C)\\&=2h^0(C,\h{O}_C(2))=26;
 \end{align*}
 By Riemann-Roch,  we have $$h^0(C, \h{O}(2))=30-21+1+h^0(K_C\otimes\h{O}_C(-2))
 .$$ Note that
 $$K_S\otimes\h{O}_S(C)\otimes\h{O}_S(-2)=|(10;3^5)-3(3;1^5)|=|(1;0^5)|=|l|$$
 and by adjunction
 $$(K_S\otimes\h{O}_S(C)\otimes\h{O}_S(-2))\otimes\h{O}_C=K_C\otimes\h{O}_C(-2),
 $$ we see that $|K_C\otimes\h{O}_C(-2)|$ is cut out on $C$ by lines
 in the plane model of $C$ and therefore $$h^0(K_C\otimes\h{O}_C(-2))=3.$$
 Hence  $h^0(C,\h{O}_C(2))=13$ and Claim (ii) follows. 
 
 \vni
 (iii) By Claims (i) and (ii) we have $$h^0(C, N_{C|\PP^4})=h^0(C, N_{S|\PP^4}\otimes\h{O}_C)+h^0(C, N_{C|S})=61,$$ and therefore
$$\dim\h{H}\le h^0(C, N_{C|\PP^4})=61.$$
It finally follows that 
$$\dim\tilde{\h{F}}_1\le h^0(C, N_{C|\PP^4})-\dim\Aut(\PP^4)=37,$$
thus $\h{F}_1$ is dense in $\tilde{\h{F}}_1$. Passing to the residual family $\Ff_1^\vee\subset\Gg^9_{25}$ we see that $\HL{25,21,9}$ has a component other than the one arising from the family $\Ff^\vee$. 
 
 

\end{itemize}



\end{proof}
\begin{rmk}
The family of {\bf simple} triple coverings of a fixed elliptic curve is irreducible by a work of Kanev; cf. \cite{Kanev}.
Therefore it is likely that if $r\ge 15$, $\HL{2r+7,r+12,r}$ is irreducible, for which the author does not have any proper justification at this moment.
\end{rmk}


The following figure is a summary of those irreducibility results of $\HL{d,g,r}$ we discussed in \S4,\S5 and \S6.
\newpage
\begin{figure}
\pgfplotsset{
every axis/.append style={
before end axis/.code={
\fill [green] (5,280) circle(5pt);
\fill [green] (120,280) circle(5pt);
\node at (60,280)
{\footnotesize $\HL{d,g,r}\neq\emptyset$;
\texttt{irreducibility is not known in general}.};
},
},
}
\begin{tikzpicture}
\begin{axis}[grid=both, transpose legend,
legend columns=9,
legend style={at={(0.5,-0.2)},anchor=north, font=\footnotesize},
xlabel= {Dimension of the projective space $r$},ylabel=Genus $g$,
ymin=3,ymax=35,enlargelimits=false,
scatter/classes={
a={mark=square*,blue},
b={mark=triangle*,red},
c={mark=o,draw=black},
d={mark=x},
e={mark=triangle*, blue},
f={mark=triangle*,green}
},
]

\addplot [
scatter,only marks,
scatter src=explicit symbolic,
] coordinates {
(4,12) [a]
(4,13)[b]
(7,16) [a]
(6,15)[b]
(8,17) [a]
(9,18) [a]
(10,19) [a]
(7,16) [a]
(5,14) [a]
(3,12) [b]
(3,14) [c]
(4,15) [c]
(5,16) [c]
(6,17) [c]
(3,14) [c]
(8,17) [c]
(3,13) [a]
(4,14) [f]
(5,15) [a]
(6,16) [a]
(7,17) [a]
(8,18) [b]
(7,18)[b]
(8,19)[a]
(9,20)[a]
(10,21)[b]
(11,22)[a]
(3,15)[c]
(4,16)[c]
(5,17)[c]
(6,18)[c]
(7,19)[c]
(8,20)[c]
(9,21)[b]
(10,22)[b]
(11,23)[b]
(12,24)[b]
(13,25)[b]
(14,26)[b]
(11,20)[a]
(12,21)[a]
(13,22)[a]
(14,23)[a]
(3,11)[d]
(5,13)[d]
(6,14)[d]
(7,15)[d]
(8,16)[d]
(9,17)[d]
(10,18)[d]
(11,19)[d]
(12,20)[d]
(13,21)[d]
(14,22)[d]
(15,24)[a]
(15,25)[d]
(15,23)[d]
(15,26)[d]
(16,24)[d]
(16,25)[a]
(16,26)[d]
(16,27)[d]
(16,28)[e]
(12,23)[d]
(13,24)[d]
(14,25)[d]
(9,19)[d]
(10,20)[d]
(11,21)[d]
(12,22)[d]
(13,23)[d]
(14,24)[d]
(15,27)[e]
};
\addlegendentry{Irreducible}
\addlegendentry{Reducible}
\addlegendentry{ ~$\mathcal{H}^\mathcal{L}_{d,g,r}\neq\emptyset$, but irreducibility is not known yet}
\addlegendentry{ ~ $\mathcal{H}^\mathcal{L}_{d,g,r}=\emptyset$}
\addlegendentry{ ~ $\{(r,g) | g=r+12, r\ge15\}$,
$\#$ of components of $\mathcal{H}^\mathcal{L}_{d,g,r}=$$ ~\#$ of components of $\mathcal{X}_{3,1}$}
\addlegendentry{ ~ Only knows $\exists$  a component of the expected dimension; Proposition \ref{d13r4}.}
\addplot [red!80!black,fill=red,fill
opacity=0.5,
] coordinates {
(3,11) (4,12) (5,13) (6,14)
(7,15) (8,16) (9,17) (10,18)
(11,19) (12,20) (13,21) (14,22) (15,23) (16,24)
}
|- (0,0) -- cycle; \addlegendentry{Pink Zone, $\mathcal{H}^\mathcal{L}_{d,g,r}=\emptyset$}

\addplot [green!20!black] coordinates {
(3,20) (5,30) (13,70)
};
\addlegendentry{ Brill-Noether line $g=5(r+1)$ for $g-d+r=5$}

\addplot [blue!20!black] coordinates {
(3,16) (16,29)
};
\addlegendentry{ Above  line $g=r+13$,  $\mathcal{H}^\mathcal{L}_{d,g,r}\neq\emptyset$;  irreducibility is not known in general.}

\end{axis}
\end{tikzpicture}
\caption{Irreducibility map}
\label{fig:irreducibility}
\end{figure}

\section{An epilogue, beyond index of speciality five }

In this final section, we would like to discuss and make a couple of statements on families of projective curves with arbitrarily given index of speciality $\al$, as an attempt to seek for a reasonable generalization of the results we obtained for $\al\le 5$. 
Even though we need to impose certain restrictions on the range of the triples $(d,g,r)$ in which the statements work, it may be worthy of coming up with some generalizations  for further study along the line of ideas we followed in this paper. For almost all the statements which will be made in this section, the author is reluctant to provide proofs, mainly because this article would get too lengthy otherwise.  Verifications for several statements in this section and more extensive relevant  results can be found in our forthcoming paper \cite{a6}.

\vni
We are interested in studying Hilbert schemes of smooth curves of degree $d$ and genus $g$ in $\PP^r$ with a given fixed index of speciality. There is a sharp upper bound of the index of speciality of the hyperplane series of a linearly normal projective curve  in terms of  $(d,g)$. To be precise, we recall the following. 

\begin{rmk}\label{rbound}
\begin{itemize}
\item[(i)]
By Castelnuovo theory, it is known that the dimension $r$ of a birationally very ample 
linear series $g^r_d$  on a curve of genus $g$ is bounded by (cf. \cite[Lemma1.5]{KKL})
\begin{equation}\label{rdim}
r\le
\begin{cases}
\frac{d+1}{3} &~\mathrm{~if~} d\le g\\
\frac{1}{3}(2d-g+1) &~\mathrm{~if~} d\ge g.
\end{cases}
\end{equation}
\item[(ii)] Setting $g-d+r=\al$, the  bound \eqref{rdim} can be read as
\begin{equation}\label{dgg}
\al\le
\begin{cases}
\frac{3g-2d+1}{3} &~\mathrm{~if~} d\le g\\
\frac{1}{3}(2g-d+1) &~\mathrm{~if~} d\ge g.
\end{cases}
\end{equation}
\end{itemize}
\end{rmk}

\vni
We recall  Claim (3.2.A) in the proof of Theorem \ref{sub6x}, which we quote as the following proposition. 
\begin{prop}\label{nonempty}
Given  $\alpha \ge 3$, we have
$$\HL{g+r-\al,g,r}\neq\emptyset$$ 
if  
\begin{equation*}
r\ge\al +1 \mathrm{~~ and ~~} g\ge r+3\al-2.
\end{equation*}
\end{prop}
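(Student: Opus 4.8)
The plan is to produce a single smooth curve carrying a complete, very ample linear series $\h{D}=g^r_d$ with $d=g+r-\al$; completeness forces the image to be linearly normal, and Riemann--Roch forces its index of speciality to equal $\al$, so any such curve furnishes a point of $\HL{g+r-\al,g,r}$. The entire construction is carried out on a \emph{residual} series. I would set
\[
e:=\deg(K_C-\h{D})=2g-2-d=g-r+\al-2,
\]
and write $e=(\al-1)k+\sigma$ with $0\le\sigma\le\al-2$. First I would record two numerical consequences of $r\ge\al+1$ and $g\ge r+3\al-2$: since $e\ge 4(\al-1)$ one obtains $k\ge 4$, and a direct substitution gives
\[
\rho(k,g,1)=2k-g-2\le-(\al-3)k-\sigma-5<0,
\]
so the $k$-gonal locus $\Mm^1_{g,k}$ is a proper subvariety of $\Mm_g$ and a general $k$-gonal curve $C$ of genus $g$ exists, carrying a unique pencil $g^1_k$.

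On such a $C$ I would choose points $q_1,\dots,q_\sigma$ lying in $\sigma$ distinct fibres of $g^1_k$, and set $\h{E}:=|(\al-1)g^1_k+q_1+\cdots+q_\sigma|$, a complete series of degree $e$. The crux is to show $\dim\h{E}=\al-1$ and, more importantly, that $\dim|\h{E}+p+q|=\al-1$ for \emph{every} effective divisor $p+q$ of degree two. Both are instances of Lemma \ref{kveryample} with $n=\al-1$ and $m=\sigma$ (resp.\ $m=\sigma+2$); the numerical hypothesis (\ref{veryamplek}) in the second case reads $g\ge 2(\sigma+2)+(\al-1)(k-1)$, and a short computation shows that $g$ exceeds the right-hand side by exactly $r-\sigma-3\ge 0$, valid precisely because $r\ge\al+1\ge\sigma+3$. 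I would then recast very ampleness of the residual series in the standard Serre-duality form: $\h{D}=|K_C-\h{E}|$ is very ample if and only if $\dim|\h{E}+p+q|=\dim\h{E}$ for all $p+q\in C_2$. Combining the two dimension counts gives exactly this, after which Riemann--Roch yields $\deg\h{D}=g+r-\al$, $h^1(\h{D})=h^0(\h{E})=\al$, and $\dim\h{D}=r$, completing the argument.

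The step I expect to be the main obstacle is verifying the hypothesis of Lemma \ref{kveryample} \emph{uniformly} in $p,q$, namely that no fibre $E\in g^1_k$ satisfies $E\le q_1+\cdots+q_\sigma+p+q$; for very ampleness this must hold for all pairs, including coincident or infinitely near ones, not merely generic ones. This is exactly where $k\ge 4$ does its work: since the $q_i$ lie in distinct fibres, any fixed fibre $E$ meets $q_1+\cdots+q_\sigma$ in at most one point, so $E$ can contribute at most $1+2=3$ points to $q_1+\cdots+q_\sigma+p+q$, whereas $\deg E=k\ge 4$. Hence no fibre is ever contained in the divisor, and the hypothesis holds for every $p+q$ simultaneously. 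A small amount of care at the finitely many ramification fibres of the gonal map, and for the diagonal case $p=q$, is handled by the same counting with multiplicities; once this uniform statement is secured, the remainder is routine bookkeeping.
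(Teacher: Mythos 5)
Your proposal is correct and follows essentially the same route as the paper's proof of Claim (3.2.A): the same residual setup $e=g-r+\al-2=(\al-1)k+\sigma$, the same general $k$-gonal curve with $q_1,\dots,q_\sigma$ in distinct fibres, and the same application of Lemma \ref{kveryample} with $m=\sigma+2$, $n=\al-1$ to force $\dim|\h{E}+p+q|=\al-1$ for all $p+q\in C_2$. Your explicit $k\ge 4$ counting argument and the Serre-duality reformulation of very ampleness are exactly the points the paper leaves implicit ("by the choice of $q_1+\cdots+q_\sigma$"), so you have merely made the paper's own argument more complete.
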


\medskip
\vni
\begin{rmk}
\begin{itemize}
\item[(i)]
One of numerical condition $r\ge\al+1$ in Proposition \ref{nonempty} is equivalent to $d\ge g+1$. One may think that imposing the restriction $r\ge\al+1$ is a too much sacrifice in view of the fact that  many hard but interesting cases occur when $d\le g$.
On the contrary, our knowledge on 
the Hilbert scheme $\HL{d,g,r}$ is still on the poor side even  when $r\ge \al+1$ as far as the author perceives.  Therefore it is worthwhile to settle down or at least make a comprehensive treatment under the assumption $r\ge \al+1$.  Accordingly, for the rest of this section, {\bf we almost always assume $r\ge \al+1$} unless otherwise specified. We also assume $\al\ge 6$ simply because we treated $\al\le5$ thoroughly in preceding sections and in earlier papers \cite
{JPAA, speciality, lengthy}. 
\item[(ii)]
We note that the bound \eqref{dgg} is sharp. Indeed, when the  index of speciality attains its maximal possible value  $\al= \frac{1}{3}(2g-d+1)$ -- the condition equivalent to the numerical condition $r=\frac{1}{3}(2d-g+1)$ in \eqref{rdim} -- a curve $C\subset\PP^r$ with maximal index of speciality $\al$ is an extremal curve. Indeed  by a straightforward computation, we have in this case $$g=r+2\al-1=\pi(d,r).$$
\item[(iii)]
If  $g\le r+2\al-2$ -- which is equivalent to the condition $r\gneq\frac{1}{3}(2d-g+1)$ -- we get a contradiction by \eqref{rdim}. Therefore we arrive at the following statement
 which can be viewed as a generalization of Theorem \ref{sub6x} (a), (b) and Proposition \ref{g=r+9} (i), (ii).
 \end{itemize}
 \end{rmk}
 \begin{prop}\label{extreme}Suppose $r\ge \al+1$ and $\al\ge6$. We have
 
 \begin{itemize}
  \item[(i)]
  $\Hh_{d,g,r}=\HL{d,g,r}=\emptyset \mathrm{ ~ for ~}  g\le r+2\al-2$
  \item[(ii)]
 $\Hh_{d,g,r}=\HL{d,g,r}\neq\emptyset \mathrm{ ~ for ~} g=r+2\al-1$ and  is reducible only if $r=\al+1$.
 \end{itemize}
 \end{prop}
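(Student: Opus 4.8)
The plan is to derive part (i) directly from the Castelnuovo dimension bound of Remark~\ref{rbound}, and to read off part (ii) from the classification of extremal curves contained in Harris' Proposition~\ref{nearlyextremal}. For part (i), note first that the standing hypothesis $r\ge\al+1$ is equivalent to $d=g+r-\al\ge g+1$, so one lives in the regime $d\ge g$ of \eqref{dgg}, where the bound reads $\al\le\frac13(2g-d+1)$. Substituting $d=g+r-\al$ and clearing denominators rearranges this to $g\ge r+2\al-1$. Hence for $g\le r+2\al-2$ there is no birationally very ample $g^r_d$ on a smooth curve of genus $g$; since the hyperplane series of any member of $\Hh_{d,g,r}$ is in particular birationally very ample, this forces $\Hh_{d,g,r}=\emptyset$, and a fortiori $\HL{d,g,r}=\emptyset$.

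For part (ii) I would put $g=r+2\al-1$, so $d=2r+\al-1$, and first check that this is precisely the extremal situation. Because $r\ge\al+1\ge 7$ one has $m:=\lfloor (d-1)/(r-1)\rfloor\in\{2,3\}$, and a short computation with the Castelnuovo function yields $\pi(d,r)=r+2\al-1=g$ in both sub-cases. Thus $C\subset\PP^r$ is extremal, hence linearly normal: if $C$ were the projection of some $\widetilde C\subset\PP^{r+1}$ of the same degree $d$, then $g=\pi(d,r)>\pi(d,r+1)$ would violate the Castelnuovo bound in $\PP^{r+1}$, so $\Hh_{d,g,r}=\HL{d,g,r}$. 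Since $r\ge 7$ rules out the Veronese surface and, by Remark~\ref{cones}, cones may be absorbed into the boundary of scroll families, Proposition~\ref{nearlyextremal} applies and reduces both non-emptiness and the enumeration of components to counting integer pairs $(a,b)$ with $a>0$ satisfying \eqref{scrolldegree} and \eqref{scrollgenus}.

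The core computation is this count. Eliminating $b=d-(r-1)a$ and writing $n:=r-1$, the genus equation $p_a(C)=g$ collapses to the quadratic
\begin{equation*}
na^2-(5n+2\al)a+6(n+\al)=0,
\end{equation*}
whose roots are $a=3$ and $a=2+\tfrac{2\al}{r-1}$. The root $a=3$ always produces the admissible pair $(a,b)=(3,\al-r+2)$, giving non-emptiness. For the second root: when $r\ge\al+2$ one has $0<\tfrac{2\al}{r-1}<2$, so $2<2+\tfrac{2\al}{r-1}<4$, and the only integer in that interval is $3$ (attained exactly when $r=2\al+1$, where it coincides with the first root); hence $a=3$ is the unique positive integer solution, there is a single pair $(a,b)$, and $\Hh_{d,g,r}$ is irreducible. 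When $r=\al+1$, i.e. $n=\al$, the second root becomes $4$, yielding the genuinely distinct pair $(4,1-\al)$ and thus a second component, so $\Hh_{d,g,r}$ is reducible. This establishes that reducibility forces $r=\al+1$.

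The main obstacle here is bookkeeping rather than ideas: one must verify $g=\pi(d,r)$ carefully in the two cases $m=2$ and $m=3$, and—more importantly—be sure that the purely numerical count of pairs $(a,b)$ genuinely counts irreducible components. The latter is exactly the content of Proposition~\ref{nearlyextremal}, once $r\ge 7$ has eliminated the exceptional low-dimensional minimal surfaces. As a sanity check, specializing $\al=5$ recovers the pairs $(3,7-r)$ for $r\ge 7$ and $\{(3,1),(4,-4)\}$ for $r=6$ found in Proposition~\ref{g=r+9}(i),(ii).
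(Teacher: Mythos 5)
Your proof is correct and follows essentially the route the paper itself indicates: part (i) is exactly the Castelnuovo-bound contradiction sketched in Remark \ref{rbound} and in the remark immediately preceding the proposition, and part (ii) is the same extremality-plus-Harris pair-counting argument used for the $\al=5$ case in Proposition \ref{g=r+9}(i),(ii), of which the paper explicitly presents this proposition as a generalization (the paper defers full details to a forthcoming article). Your quadratic $na^2-(5n+2\al)a+6(n+\al)=0$ with roots $a=3$ and $a=2+\tfrac{2\al}{r-1}$, giving the unique pair $(3,\al-r+2)$ for $r\ge\al+2$ and the extra pair $(4,1-\al)$ precisely when $r=\al+1$, is the computation that verification requires, and it checks out.
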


\vni
In view of Proposition \ref{extreme} and Proposition \ref{nonempty}, it is natural to ask whether $$\HL{d,g,r}\neq\emptyset\mathrm{ ~ ~ \hskip 12pt for ~~ \hskip 12pt }
r+2\al\le g\le r+3\al -3.$$
When $g=r+2\al$ or $g=r+2\al+1$,  one can prove the following statement which is a partial generalization of 
Theorem \ref{sub6x} (c), (d).

\begin{prop}\label{r+2a+1} Fix $\al \ge 5$:
\begin{itemize}
\item[(a)] $\HL{d,g,r}=\emptyset$ for $g=r+2\al$ if $r\ge \al+4$.
\item[(b)] $\HL{d,g,r}=\emptyset$ for $g=r+2\al+1$ if $r\ge\max\{\al+6, 12\}$.
\end{itemize}
\end{prop}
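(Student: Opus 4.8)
The plan is to assume $\HL{d,g,r}\neq\emptyset$ with $d=g+r-\al$, pick a complete very ample hyperplane series $\mathcal{D}=g^r_d$ of a general element, pass to its residual series $\mathcal{E}=\mathcal{D}^\vee=g^{\al-1}_e$, and derive a contradiction by showing that $\mathcal{E}$ is forced to be compounded in a way that makes $\mathcal{D}=\mathcal{E}^\vee$ fail to be very ample via Lemma~\ref{easylemma1}. Here $e=2g-2-d=g-r+\al-2$, so $e=3\al-2$ in case (a) and $e=3\al-1$ in case (b). Since the two statements for $\al=5$ are exactly Theorem~\ref{sub6x}(c),(d), I would assume $\al\ge6$. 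Writing $\mathcal{E}=\mathcal{E}'+\Delta$ with moving part $\mathcal{E}'=g^{\al-1}_{e'}$, $e'=e-\deg\Delta$, the first task is to exclude that $\mathcal{E}'$ is birationally very ample.

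This is where the Castelnuovo bound enters. If $\mathcal{E}'$ were birationally very ample, its image would be an irreducible, non-degenerate curve in $\PP^{\al-1}$ of geometric genus $g$ and degree $e'\le e$, whence $g\le\pi(e',\al-1)\le\pi(e,\al-1)$ by monotonicity of $\pi(\cdot,\al-1)$. Computing $m=\lfloor(e-1)/(\al-2)\rfloor$ and the residue $\epsilon$, I get in case (a) $m=3,\ \epsilon=3$ and $\pi(3\al-2,\al-1)=3\al+3$; in case (b) $m=4$ for $\al\in\{5,6\}$ and $m=3$ for $\al\ge7$, with $\pi(3\al-1,\al-1)=3\al+6$ in the latter range. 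Comparing against $g=r+2\al$ (resp. $g=r+2\al+1$), the strict inequality $g>\pi(e,\al-1)$ holds precisely when $r\ge\al+4$ (resp. when $r\ge\al+6$ for $\al\ge7$ and when $r\ge12$ for $\al\in\{5,6\}$), which is exactly the hypothesis $r\ge\al+4$ in (a) and $r\ge\max\{\al+6,12\}$ in (b). Hence $\mathcal{E}'$ must be compounded.

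I would then enumerate the compounded types. If $\mathcal{E}'$ induces a degree-$k$ map $\eta\colon C\to E'$ onto a non-degenerate $E'\subset\PP^{\al-1}$ of degree $f$, then $e'=kf$ with $k\ge2$ and $f\ge\al-1$; the constraint $kf\le e\le3\al-1$ leaves only $k=2$ (a double cover) or $k=3$ with $f=\al-1$, in which case $E'$ is a rational normal curve and $\eta$ is trigonal ($k\ge4$ being numerically impossible for $\al\ge5$). In the trigonal case $\deg\Delta=e-3(\al-1)>0$, so Lemma~\ref{easylemma1}(ii) applies and $\mathcal{D}$ is not very ample. For a double cover onto a curve of geometric genus $h\ge1$, the series $g^{\al-1}_{e'/2}$ pulled up through the normalization of $E'$ has degree $e'/2=f\le e/2<2(\al-1)$, hence is non-special by Clifford's theorem; so the hypothesis of Lemma~\ref{easylemma1}(i) is automatically met and $\mathcal{D}$ is again not very ample. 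Finally a double cover onto a rational curve ($h=0$) renders $C$ hyperelliptic, which carries no special very ample series at all. In every case $\mathcal{D}$ fails to be very ample, contradicting $\mathcal{D}\in\HL{d,g,r}$, so the Hilbert scheme is empty.

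The main obstacle is the arithmetic bookkeeping in the Castelnuovo estimate: the floor $m=\lfloor(e-1)/(\al-2)\rfloor$ drops from $4$ to $3$ as $\al$ passes from $6$ to $7$ in case (b), and one must check that the genus bound $\pi(e,\al-1)$ matches the \emph{two-regime} threshold $r\ge\max\{\al+6,12\}$ on the nose rather than up to a constant. Everything else is soft: the enumeration of compounded loci is a one-line inequality, and the potentially delicate non-speciality hypothesis of Lemma~\ref{easylemma1}(i) is supplied for free by Clifford once one observes $f\le e/2<2(\al-1)$, so no geometric input beyond Lemma~\ref{easylemma1} and the bound $\pi(\cdot,\al-1)$ is needed.
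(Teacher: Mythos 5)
Your argument is correct, but note that the paper itself contains no proof of this proposition: it is stated in the epilogue (\S 7), where the author explicitly defers verification to the forthcoming paper \cite{a6}, so the only internal benchmark is the paper's proof of the $\al=5$ instances, Theorem \ref{sub6x}(c),(d). Your proof is exactly that argument generalized to all $\al$: pass to the residual series $\h{E}=\h{D}^\vee=g^{\al-1}_e$, use the Castelnuovo bound $\pi(e,\al-1)$ to force $\h{E}$ to be compounded under the stated hypotheses on $r$, then kill every compounded type via Lemma \ref{easylemma1} (the trigonal case, where $\deg\Delta\in\{1,2\}>0$, and double covers of positive-genus curves, where Clifford supplies the non-speciality hypothesis) together with the standard fact that a hyperelliptic curve carries no special very ample series. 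Your arithmetic is also on the nose: $\pi(3\al-2,\al-1)=3\al+3$ yields the threshold $r\ge\al+4$ in (a), while $\pi(3\al-1,\al-1)=3\al+6$ for $\al\ge 6$ (and $=22$ for $\al=5$) yields precisely $r\ge\max\{\al+6,12\}$ in (b), and the enumeration of compounded types ($k\ge 4$ numerically impossible, $k=3$ forcing a rational normal curve image, $k=2$ with $f<2(\al-1)$) is complete.
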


\vni
The converse of Proposition \ref{r+2a+1}, namely

(a$\dagger$) $\HL{d,g,r}\neq\emptyset$ for $g=r+2\al$ and for every 
$3\le r\le \al+3$

(b$\dagger$) $\HL{d,g,r}\neq\emptyset$ for $g=r+2\al+1$ and for every 
$3\le r\le  \al+5$

\vni
 has not been completely settled yet for $\al\ge 6$. Some partial results have been obtained 
 which does not seem to be adequate to be stated precisely here. 
 
 \vni
When the genus $g$ is still small but not too small with respect to $\al$ and $r$, say when $g=r+2\al+2$, one may prove the following  result which  partially recovers Theorem \ref{sub6x}(e) as well as Theorem \ref{g=r+12}.
\vni

\begin{prop}\label{r+2a+2}
Fix $\al\ge 6$. We have  $$\HL{d,g,r}=\HL{g+r-\al,g,r}\neq\emptyset$$ for $g=r+2\al+2$ if $r\ge \al+5$. Furthermore, in case $g=r+2\al+2\ge \max\{3\al+10, 2\al+17\}$, a general element of any component of $$\HL{d,g,r}=\HL{g+r-\al,g,r}=\HL{2r+\al+2,r+2\al+2,r}$$ is a triple cover of an elliptic curve.
\end{prop}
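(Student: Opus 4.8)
The non-emptiness is immediate from Corollary \ref{triple1}: the hypotheses $\al\ge 6$ and $r\ge\al+5$ are a special case of those there, and the triple cover $C\to E$ of an elliptic curve constructed by Lemma \ref{triple0} carries a complete very ample $\h{E}^\vee=g^r_{2r+\al+2}$. So the real content is the \emph{classification} of the general member of an arbitrary component under the stronger hypothesis $g\ge\max\{3\al+10,2\al+17\}$, and my plan is to read off the covering structure from the residual series. I would fix a component $\h{G}\subset\widetilde{\h{G}}_\h{L}\subset\h{G}^r_{2r+\al+2}$, take a general (complete, very ample) member $\h{D}=g^r_d$ with $d=2r+\al+2$, and pass to $\h{E}=\h{D}^\vee$. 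A Riemann--Roch count gives $\deg\h{E}=2g-2-d=3\al$ and $\dim\h{E}=h^1(\h{D})-1=\al-1$, so $\h{E}=g^{\al-1}_{3\al}$; since $\h{E}^\vee=\h{D}$, showing $\h{E}$ induces a triple cover of an elliptic curve is exactly the goal.

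The first step is to exclude that $\h{E}$, or its moving part $\h{E}'=\h{E}-\Delta$ (with $\Delta$ the base locus, $\deg\Delta=\delta$), is birationally very ample. If it were, its image in $\PP^{\al-1}$ would be an integral non-degenerate curve of degree $3\al-\delta$ and geometric genus $g$, forcing $g\le\pi(3\al-\delta,\al-1)\le\pi(3\al,\al-1)$ by monotonicity of $\pi$ in the degree. The point is that the hypothesis is calibrated precisely against this bound: a short Castelnuovo computation gives $\pi(3\al,\al-1)=3\al+9$ for $\al\ge 8$, while $\pi(21,6)=30$ and $\pi(18,5)=28$, so that $\max\{3\al+10,2\al+17\}=\pi(3\al,\al-1)+1$ for every $\al\ge 6$. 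Thus $g>\pi(3\al,\al-1)$, a contradiction, and $\h{E}'$ must be compounded. Hence $\h{E}'=\eta^\ast\h{F}$ for a covering $\eta\colon C\to C'$ of degree $k\ge 2$ onto a non-degenerate curve $C'\subset\PP^{\al-1}$ of degree $f\ge\al-1$ and geometric genus $h$, where $\h{F}=g^{\al-1}_f$ is the complete series on the normalization of $C'$ pulled back by $\eta$; in particular $kf=3\al-\delta\le 3\al$.

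The remaining step is a finite numerical analysis. From $f\ge\al-1$ and $kf\le 3\al$ one gets $k\le 3\al/(\al-1)<4$, so $k\in\{2,3\}$. If $\h{F}$ were special, Clifford's theorem would give $f\ge 2(\al-1)$, incompatible with $f\le 3\al/k\le 3\al/2$ once $\al\ge 6$; hence $\h{F}$ is non-special and $f=h+\al-1$. For $k=2$ this yields $\delta=\al+2-2h$: when $h\ge 1$, Lemma \ref{easylemma1}(i), whose non-speciality hypothesis is exactly the one just verified, shows $\h{E}^\vee=\h{D}$ is not very ample, while when $h=0$ the curve $C$ is hyperelliptic and carries no special very ample series — both contradicting very ampleness of $\h{D}$. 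For $k=3$ one gets $\delta=3-3h\ge 0$, so $h\in\{0,1\}$; the case $h=0$ has $\delta=3>0$ and is excluded by Lemma \ref{easylemma1}(ii), leaving $h=1$ and $\delta=0$. Thus $\eta\colon C\to C'$ is a triple cover of a smooth genus-one curve, with $\h{F}=g^{\al-1}_\al$ complete and non-special on it and $\h{E}=|\eta^\ast\h{F}|$, which is precisely the configuration of Lemma \ref{triple0}, and the Proposition follows.

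The main obstacle is the bookkeeping of the second and third steps rather than any single deep theorem. I expect the delicate points to be: first, ensuring that stripping off the base locus $\Delta$ does not invalidate the Castelnuovo estimate (this is handled by the monotonicity of $\pi$ in the degree); and second, verifying that the series $\h{F}$ pulled back through $\eta$ really is the complete non-special series demanded as input by Lemma \ref{easylemma1}(i), which is where the bound $\al\ge 6$ enters through Clifford's inequality. By contrast, the gonality arguments via \csi that are needed elsewhere to separate components are not required here, since we only classify the general member of a component.
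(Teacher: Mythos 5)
Your proposal is correct, and it is essentially the argument the paper intends. Note that the paper states Proposition \ref{r+2a+2} without proof (the final section explicitly defers verifications to the forthcoming paper \cite{a6}), so the only internal benchmark is the $\al=5$ analogue, Theorem \ref{g=r+12}, whose second half ($r\ge 15$) runs exactly your way: the Castelnuovo bound forces $\h{E}=\h{D}^\vee$ to be compounded, and the compounded case is whittled down to a triple cover of an elliptic curve. Your two pillars are precisely the paper's own tools used as designed --- Corollary \ref{triple1} for non-emptiness, and Lemma \ref{easylemma1} together with the Clifford/Riemann--Roch bookkeeping (as in the proof of Theorem \ref{sub6x}(c),(d)) for the classification. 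Your numerical calibration also checks out: $\pi(18,5)=28$, $\pi(21,6)=30$, and $\pi(3\al,\al-1)=3\al+9$ for $\al\ge 8$, so indeed $\max\{3\al+10,\,2\al+17\}=\pi(3\al,\al-1)+1$ for all $\al\ge 6$, which is exactly the threshold needed to exclude a birationally very ample $\h{E}'$. If anything, your treatment is more complete than the paper's template: in Theorem \ref{g=r+12} the paper assumes $\h{E}$ base-point-free by fiat (relegating the base-locus possibility to a footnote), whereas you dispose of a nonempty base locus honestly, via the hyperelliptic exclusion in the case $k=2$, $h=0$ and via Lemma \ref{easylemma1}(ii) in the trigonal case $k=3$, $h=0$.
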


\begin{rmk}
\begin{itemize}
\item[(i)]
 If $g\ge r+2\al+3$, the existence of linearly normal smooth curves in $\PP^r$ with index of speciality $\al$ is only assured for $\al=5$ as we have seen in Theorem \ref{sub6x} (e). Note  the condition $g\ge r+3\al-2$ we have in Proposition \ref{nonempty} is same as $g\ge r+2\al+3$ if $\al=5$. However, for $\al\ge 6$ there is a non-empty range $r+2\al +3\le g\le r+3\al -3$ in which the existence is yet to be determined by some other (hopefully more systematic) treatment.
 \item[(ii)]
In Proposition \ref{nonempty} (i.e. in the proof of Theorem \ref{sub6x} for the Claim (3.2.A)), we impose numerical restrictions such 
as $r\ge \al+1$ and $g\ge r+3\al-2$ so that Lemma \ref{kveryample} works, where
 we used general $k$-gonal curves with $k\ge 4$. However, if we consider the family of trigonal curves with 
Maroni invariant $m$, it is possible to eliminate the numerical restriction $r\ge \al+1$ and the relax the 
genus assumption $g\ge r+3\al-2$ somehow. However, if we do this, we get very ample line bundle
only with particular degrees, say $d\equiv 2g-2 (\mathrm{mod} ~3)$. To be precise, for $\al\ge 3$
$$\HL{2g-2-3(\al-1),g,r}\neq\emptyset, $$ which however can be regarded a consequence of a classically known results on trigonal curves; cf. \cite[Lemma 1]{ms}.
\end{itemize}
\end{rmk}

\medskip
\vni
The existence and the irreducibility of $\HL{g+r-\al,g,r}$ with $r\ge \al+1$ for certain value of the genus $g$ which is 
small with respect to $\al$ and $r$ -- say $r+2\al -1 \le g\le r+3\al+1$ -- can also be carried out in a similar manner as were done in Proposition \ref{g=r+9}, Theorem \ref{r+10}, Theorem \ref{2r+6} and Propositon \ref{r+117}. By analyzing extremal curves, nearly extremal curves or curves lying on a surface of degree $r$ in $\PP^r$ it is possible to determine the irreducibility of $\HL{g+r-\al,g,r}$ for some triples $(d,g,r)$. The following example -- which is partially complementary to the statement (a$\dagger$) after Proposition \ref{r+2a+1} -- is among one of those. 

\begin{exam} 
$\HL{d,g,r}=\HL{3\al+6,3\al+3,\al+3}\neq\emptyset$ and  is irreducible if $\al\ge 7$ and is reducible if $\al=6$.

\end{exam}

\appendix
\section{Specialization} We provide a proof of Proposition \ref{specialization} for which the author could not find an explicit source of a proof in the literature.

\begin{prop}\rm{(Propostion \ref{specialization})} Smooth curves in $\PP^{n+1}$, $n\ge 2$  lying on a cone over a rational normal curve in  a hyperplane $H\cong\PP^{n}$ is  a specialization of curves lying on a rational normal surface scroll.
\end{prop}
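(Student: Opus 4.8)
The plan is to produce, for a given smooth curve $C\subset\PP^{n+1}$ of degree $d$ and genus $g$ lying on the cone $S$, an explicit one-parameter flat family of curves whose special fibre is $C$ and whose general fibre lies on a smooth rational normal scroll. The key idea is to work not on the singular cone itself, but on its minimal desingularisation, and then to let the desingularising Hirzebruch surface vary in the classical degeneration $\FF_{n-2}\rightsquigarrow\FF_n$. Following the notation of Remark \ref{minimal}(iii), let $\FF_n\to S$ be the desingularisation and let $\widetilde C\equiv kh+(d-nk)f$ be the strict transform of $C$, so that by \eqref{cone} and \eqref{conevertex} we have $p_a(\widetilde C)=g$ and $m:=\widetilde C\cdot C_0=d-nk\ge 0$ is the multiplicity of $C$ at the vertex. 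Since $C$ is smooth, $\widetilde C\to C$ is an isomorphism and the $m$ points of $\widetilde C\cap C_0$ all map to the single vertex; hence necessarily $m\in\{0,1\}$, and in either case $p_a(C)=p_a(\widetilde C)=g$.

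Next I would degenerate the ambient surface. Over a smooth affine curve $T$ with a marked point $0$, take $\widetilde{\mathcal{S}}=\PP(\mathcal{E})\to\PP^1\times T\to T$ for a rank-two bundle $\mathcal{E}$ with $\mathcal{E}_t\cong\Oo\oplus\Oo(-(n-2))$ for $t\neq 0$ and $\mathcal{E}_0\cong\Oo\oplus\Oo(-n)$, obtained from a family of extensions of $\Oo$ by $\Oo(-n)$ whose extension class vanishes exactly at $0$. Let $\Oo(1)$ be the relative hyperplane bundle normalised so that $\Oo(1)|_{\widetilde{\mathcal{S}}_0}=h=C_0+nf$ and $\Oo(1)|_{\widetilde{\mathcal{S}}_t}=H=C_0'+(n-1)f$; both restrictions have self-intersection $n$. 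The relative morphism defined by $|\Oo(1)|$ sends $\widetilde{\mathcal{S}}$ to a family $\mathcal{S}\subset\PP^{n+1}\times T$ of surfaces of minimal degree: since $\Oo(1)|_0\cdot C_0=0$ it contracts the negative section, so $\mathcal{S}_0=S$ is the cone, while for $t\neq 0$ it is an embedding and $\mathcal{S}_t$ is the smooth scroll $S(1,n-1)$ governed by \eqref{scrolldegree}--\eqref{scrollgenus}.

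To spread out the curve, on $\widetilde{\mathcal{S}}$ I would take the line bundle $\mathcal{L}:=k\,\Oo(1)+(d-nk)\,\Phi$, where $\Phi$ is the relative ruling class; then $\mathcal{L}|_0$ is the class of $\widetilde C$ while $\mathcal{L}|_t=kH+(d-nk)L$ has, by \eqref{scrolldegree} and \eqref{scrollgenus}, degree $d$ and arithmetic genus $g$. A routine cohomology-and-base-change computation shows that $h^0(\widetilde{\mathcal{S}}_t,\mathcal{L}_t)$ is constant and that the restriction $H^0(\widetilde{\mathcal{S}},\mathcal{L})\to H^0(\FF_n,\mathcal{L}_0)$ is surjective, so the defining section of $\widetilde C$ extends to a section $\sigma$ of $\mathcal{L}$. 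Let $\mathcal{D}=Z(\sigma)$ and let $\mathcal{C}\subset\PP^{n+1}\times T$ be its image; for $t\neq 0$ the fibre $\mathcal{C}_t$ is a curve of degree $d$ and genus $g$ lying on the smooth scroll $\mathcal{S}_t$, as desired.

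The one genuinely delicate point, and the main obstacle, is the identification of the flat limit: I must rule out that the limit $\mathcal{C}_0$ of $\{\mathcal{C}_t\}_{t\neq 0}$ is $C$ together with an embedded point at the vertex. This is where it matters that a curve through the vertex is only a Weil, not a Cartier, divisor on $S$, which is exactly why the family is built upstairs on the varying Hirzebruch surfaces rather than via a line bundle on $S$. Taking $\mathcal{C}=\overline{\mathcal{C}|_{T\setminus\{0\}}}$ the family is flat over the smooth base $T$ by construction; since $\mathcal{D}_0=\widetilde C$ and $\widetilde C\to C$ is birational onto its image, $\mathcal{C}_0$ contains $C$, and as $\deg\mathcal{C}_0=\deg C=d$ this is an equality of sets. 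Then $p_a(\mathcal{C}_0)=g=p_a(C)$ forces $\mathcal{C}_0=C$ scheme-theoretically, so no embedded point is created at the vertex, and the proof concludes.
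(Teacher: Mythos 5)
Your proposal is correct and follows essentially the same route as the paper's Appendix A proof: the same extension-class degeneration of $\FF_{n-2}$ to $\FF_n$ over $T=\AA^1$, mapped to $\PP^{n+1}_T$ by the relative tautological bundle so that the scroll degenerates to the cone, and the same relative line bundle $\h{M}=\h{L}^{\otimes k}\otimes\pi^*\Oo_{\PP^1_T}(d-nk)$ restricting to $kh+(d-nk)f$ on each fibre, with constancy of $h^0$ used to spread the curve out over $T$. The differences are only presentational: the paper carries out explicitly the cohomology computation you label routine (via $h^i(\FF_e,mh)=h^i(\FF_e,mh+f)=0$ for $i=1,2$, $m\ge -1$, and Grauert's theorem, packaging the spreading-out as the irreducible projective bundle $\PP(g_*\h{M})$), whereas you extend a single section and in exchange treat more carefully the identification of the flat limit in $\PP^{n+1}$ (ruling out an embedded point at the vertex by the degree and arithmetic-genus comparison), a point the paper passes over.
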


\begin{proof} The proof is based on the proofs of similar statements in \cite{Brevik} and \cite{Nasu} for the case $n=2$ and curves on cubic surfaces in $\PP^3$. 
One first shows that there is a flat family whose special fibre is a cone with a general fibre a rational normal surface scroll as follows. 

\vni
Let $T$ be the affine line $\mathbb{A}^1=\mathrm{Spec~ } \CC[t]$ and let $x, y$ be the homogeneous coordinates of $\PP^1$.
Consider the following non-splitting exact sequence, 
\begin{equation}\label{nsplit}
0\rightarrow\h{O}_{\PP^1}\stackrel{[x,y^{n-1}]}{\longrightarrow}\h{O}_{\PP^1}(1)\oplus\h{O}_{\PP^1}(n-1)\stackrel{[y^{n-1},-x]^t}{\longrightarrow}\h{O}_{\PP^1}(n)\longrightarrow 0.
\end{equation}
\vni
Since the exact sequence \eqref{nsplit} does not split, we let $0\neq\mathbf{e}\in\textrm{Ext}^1(\h{O}_{\PP^1}(n),\h{O}_{\PP^1})$
be the corresponding extension class of the exact sequence. Let $\PP^1_T$ be the projective line over $T$. The class $t\,\mathbf{e}\in\textrm{Ext}^1(\h{O}_{\PP^1_T}(n),\h{O}_{\PP^1_T})$ induces an extension $\h{E}$ of $\h{O}_{\PP^1_T}(n)$ by $\h{O}_{\PP^1_T}$, corresponding to the exact sequence
$$0\longrightarrow \h{O}_{\PP^1_T}\longrightarrow\h{E}\longrightarrow \h{O}_{\PP^1_T}(n)\longrightarrow 0.$$

\vni
The fiber of the vector bundle $\h{E}$ over $t\in T$ is given by  $$\h{E}_t\cong\h{O}_{\PP^1}(1)\oplus\h{O}_{\PP^1}(n-1) \textrm{ ~for ~} t\in T-\{0\}$$ and
$$\h{E}_0\cong\h{O}_{\PP^1}\oplus\h{O}_{\PP^1}(n)$$ by definition; since $t=0, t\,\mathbf{e}=\mathbf{0}\in \textrm{Ext}^1(\h{O}_{\PP^1_T}(n),\h{O}_{\PP^1_T})$ 
hence the corresponding exact sequence splits. Set 
$$Y=\PP(\h{E})\stackrel{\pi}{\rightarrow}\PP^1_T,$$ a $\PP^1$
bundle
over $\PP^1_T$.  Note that 
$$Y_t\cong\PP(\h{O}_{\PP^1}(1)\oplus\h{O}_{\PP^1}(n-1))\cong\mathbb{F}_{n-2} \textrm{ for } t\in T-\{0\}$$ and
$$Y_0\cong\PP(\h{O}_{\PP^1}\oplus\h{O}_{\PP^1}(n))\cong\mathbb{F}_{n}.$$ 
\vni
We have the following diagram 
\vni

\begin{equation*}
  \xymatrix@R+2em@C+2em{
  Y \ar[r]^-{\phi} \ar[d]_{\pi} & ~~~\PP^{n+1}_T \ar[d]^{p} \\
  \PP^1_T \ar[r]_-{p}& T
  }
 \end{equation*}
\vni
where $\phi$ is the morphism induced by the  tautological bundle $\h{L}$ on $\PP(\h{E})=Y$;
the image $S$ under $\phi$ is a family of surfaces in $\PP^{n+1}$ of degree $n$ over the affine line $T$  whose general fiber is a rational normal scroll in $\PP^{n+1}$ and the special fiber is a cone over a rational normal curve in $\PP^{n}$. This family is a flat family  because the Hilbert polynomial of the image of special and general fibers are the same.

\vni Let $X_0$ be a smooth connected curve of degree $d$ and genus $g$ on the cone $S_0\subset\PP^{n+1}$ over a rational normal curve in $\PP^{n}$. Let $\st{X_0}\subset Y_0$ denote the proper transformation of $X_0$ under the desingularization  $Y_0\rightarrow S_0$. Denoting $h$ and $f$ by the class of the pull back of $\h{O}_{S_0}(1)$ via $Y_0\rightarrow S_0$ and the class of a fiber 
$Y_0\rightarrow\PP^1$, we have 
$$\st{X_0}\equiv kh+(d-nk)f\in\mathrm{Pic}{ ~Y_0}\cong\mathbb{Z}^{\oplus 2},$$ where $k:=(\st{X_0}\cdot f)$.

\vni
Set 
$$\h{M}=\h{L}^{\otimes k}\otimes\pi^*\h{O}_{\PP^1_T}(d-nk).$$

\vni Then for any $t\in T$, the fiber $\h{M}_t$ over $t$ is an inversitble sheaf on $Y_t$, so that $$\h{M}_t\equiv kh+(d-nk)f\in\mathrm{Pic } ~Y_t$$ by definition of $\h{M}$; by abusing notation we use the same notaton  $f$ and $h$ on $\mathrm{Pic } ~Y_t$ denoting the fiber and the pull back of $\h{O}_{S_t}(1)$ via $Y_t\stackrel{\cong}{\rightarrow} S_t$ which is an isomorphism if $t\neq 0$. By \eqref{conevertex}, we have  $d=nk$ or $d=nk+1$.  We claim that for every $t\neq 0$ there exists smooth connected curve $X_t\subset S_t\subset\PP^{n+1}$ such that $\st{X_t}\in|\h{M}_t|$; here we use the notation $\st{X_t}$ as a curve on the surface $\mathbb{F}_{n-2}\cong Y_t$. To see this, note that the corresponding line bundle on $\mathbb{F}_{n-2}$ is very ample; if $d=nk$, then $\h{M}_t\equiv kh$ and since $h=C_0+(n-1)f$ is very ample, $\h{M}_t$ is very ample. If $d=nk+1$, then 
$$\h{M}_t\equiv kh+f=k(C_0+(n-1)f))+f=kC_0+((n-1)k+1)f$$ is still very ample by the well-known criteria for the very ampleness of line bundles on $\mathbb{F}_{n-2}\cong Y_t\cong S_t$ if $t\neq 0$; cf. \cite[V. Corollary 2.18]{Hartshorne}. Note that 
 $$h^0(Y_t,\h{M}_t)=g+2d-(n-2)k$$ for $t\neq 0$ by a  formula corresponding to \eqref{lsdimension} on a smooth scroll in $\PP^{n+1}$. 
For $t=0$, we also have $$h^0(Y_0,\h{M}_0)=g+2d-(n-2)k,$$
coming from the vanishing of the cohomology on $\FF_e$, $e\ge 0$; 
\begin{equation}\label{vc} 
h^i(\mathbb{F}_e, mh)=h^i(\mathbb{F}_e, mh+f)=0 \mathrm{~for~ } i=1,2 \mathrm{ ~and~ } m\ge -1.
\end{equation}
Recalling that $Y_0=\mathbb{F}_{n}$ and $\h{M}_0=kh+f$ or $kh$, we have
$$h^i(Y_0,\h{M}_0)=h^i(\mathbb{F}_{n},\st{X_0})=0$$ for $i=1,2$
by \eqref{vc}. By Riemann-Roch,
\begin{eqnarray*}h^0(Y_0,\h{M}_0)&=&h^0(Y_0,\st{X}_0)=\frac{1}{2}(\st{X}_0+K_{Y_0})\cdot\st{X}_0-\st{X}_0\cdot K_{Y_0}+\chi(\h{O}_{Y_0})\\
&=&\frac{1}{2}\deg K_{\st{X}_0}-\st{X}_0\cdot(-2h+(n-2)f)+1
\\&=&g+2d-(n-2)k.
\end{eqnarray*}


\vni
We may regard $Y$ as a $T$-schme via $$g:=\phi\circ p=p\circ\pi; Y\rightarrow T.$$ Note that by 
Grauert theorem $g_*\h{M}$ is locally free of rank $r=g+2d-(n-2)k$ on $T$ and hence $\PP(g_*\h{M})$ is an irreducible $\PP^{r-1}$ - bundle over $T$. \vni
We further note that a general elment of $\PP(g_*\h{M})$ corresponds to a smooth connected curve  which is conained in a general member of the family $S\subset\PP^{n+1}_T$. Therefore it finally follows that there is an open set $U\subset \PP(g_*\h{M})$ and a flat family of curves over $U$ whose general element is a curve on the surface $S_t\subset\PP^{n+1}$, $t\in T-\{0\}$, and whose special member is in $X_0\subset S_0\subset\PP^{n+1}$ where $S_0$ is a cone; take an open set $0\in V\subset T$. Then take $\st{U}=g^{-1}(V)\subset Y$. And then consider the restriction map $\st{g}=g_{|\st{U}}: \st{U}\rightarrow V\subset T$. Then take $U=\PP(\st{g}_*(\h{M}_{|\st{U}})).
$
\end{proof}

\section{A criteria for linear normality} 
\vni
The criteria Remark \ref{linearlynormalcriteria} (c)(iii) for the linear normality of curves lying on  certain rational surfaces is a consequence  of the following lemma due to Dolcetti and Pareschi \cite[Lemma 1.3]{DP}.
\begin{lem} \label{dpc}Let $C\subset\PP^n$ be a smooth, connected curve and $H$ be a hyperplane intersecting $C$ transversally. Let $Z\subset H$ be a curve. Assume that 

\textrm{(i)} $h^j(H,\mathcal{I}_{Z,H}(1))=0, j=0,1$

\textrm{(ii)} $C\cap Z=C\cap H$ as schemes.

\vni Then $X=C\cup Z$ satisfies $h^j(\PP^n,\h{I}_X(1))=0, j=0, 1$.
\end{lem}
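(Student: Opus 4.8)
The plan is to realise $X=C\cup Z$ through the residuation (liaison) exact sequence attached to the hyperplane $H$, and then to reduce the two vanishings for $X$ to hypotheses (i) and (ii) together with the triviality of the ideal cohomology of the connected curve $C$. Let $\ell$ be a linear form cutting out $H$. For the scheme-theoretic union one has $\mathcal{I}_X=\mathcal{I}_C\cap\mathcal{I}_Z$, and the general residuation sequence with respect to the divisor $H=\{\ell=0\}$ reads
$$0\longrightarrow \mathcal{I}_{X:H}(-1)\xrightarrow{\ \cdot\ell\ }\mathcal{I}_X\longrightarrow \mathcal{I}_{X\cap H,H}\longrightarrow 0,$$
where $X:H$ is the residual scheme with ideal $\mathcal{I}_X:\ell$ and $X\cap H$ is the scheme-theoretic intersection, with ideal $(\mathcal{I}_X+(\ell))/(\ell)$ inside $\mathcal{O}_H$. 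Exactness is formal: $\cdot\ell$ is injective because $\ell$ is a nonzerodivisor, and the cokernel is $\mathcal{I}_X/(\mathcal{I}_X\cap(\ell))=\mathcal{I}_X/\ell(\mathcal{I}_X:\ell)$, which is precisely $\mathcal{I}_{X\cap H,H}$.

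First I would identify the two outer terms. Since $C$ meets $H$ transversally, $C\not\subset H$, so $\ell$ is a nonzerodivisor modulo $\mathcal{I}_C$ and $\mathcal{I}_C:\ell=\mathcal{I}_C$; on the other hand $\ell\in\mathcal{I}_Z$ because $Z\subset H$, so $\mathcal{I}_Z:\ell=\mathcal{O}$. Using $(\mathcal{I}_C\cap\mathcal{I}_Z):\ell=(\mathcal{I}_C:\ell)\cap(\mathcal{I}_Z:\ell)$ this gives $\mathcal{I}_X:\ell=\mathcal{I}_C$, i.e. the residual scheme $X:H$ is exactly $C$. For the intersection term I would invoke hypothesis (ii): the equality $C\cap Z=C\cap H$ of schemes says $\mathcal{I}_C+\mathcal{I}_Z=\mathcal{I}_C+(\ell)$, so $\mathcal{I}_Z\subset\mathcal{I}_C+(\ell)$. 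Given $f\in\mathcal{I}_Z$, write $f=c+\mu\ell$ with $c\in\mathcal{I}_C$; since $f$ and $\mu\ell$ lie in $\mathcal{I}_Z$, so does $c$, whence $c\in\mathcal{I}_C\cap\mathcal{I}_Z$ and $f\in(\mathcal{I}_C\cap\mathcal{I}_Z)+(\ell)=\mathcal{I}_X+(\ell)$. This yields $\mathcal{I}_X+(\ell)=\mathcal{I}_Z$, that is $X\cap H=Z$ scheme-theoretically in $H$. After twisting by $\mathcal{O}(1)$ the sequence becomes
$$0\longrightarrow \mathcal{I}_C\xrightarrow{\ \cdot\ell\ }\mathcal{I}_X(1)\longrightarrow \mathcal{I}_{Z,H}(1)\longrightarrow 0.$$

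Finally I would run the associated long exact cohomology sequence. The connectedness of $C$ supplies the needed triviality of the left-hand term: from $0\to\mathcal{I}_C\to\mathcal{O}_{\PP^n}\to\mathcal{O}_C\to0$ the map $H^0(\mathcal{O}_{\PP^n})\to H^0(\mathcal{O}_C)$ is an isomorphism $\CC\xrightarrow{\sim}\CC$ and $H^1(\mathcal{O}_{\PP^n})=0$, so $H^0(\PP^n,\mathcal{I}_C)=H^1(\PP^n,\mathcal{I}_C)=0$. Feeding this into the long exact sequence produces an isomorphism $H^0(\mathcal{I}_X(1))\cong H^0(\mathcal{I}_{Z,H}(1))$ and an injection $H^1(\mathcal{I}_X(1))\hookrightarrow H^1(\mathcal{I}_{Z,H}(1))$; both right-hand groups vanish by hypothesis (i), forcing $h^j(\PP^n,\mathcal{I}_X(1))=0$ for $j=0,1$, as claimed. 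The one step demanding genuine care — and the only place where hypothesis (ii) intervenes — is the scheme-theoretic identification $X\cap H=Z$; once that is secured, the remainder is the formal machinery of residuation and the connectedness of $C$.
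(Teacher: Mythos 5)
Your proof is correct. There is nothing in the paper to compare it against: the paper states this lemma as a quotation of Dolcetti--Pareschi \cite[Lemma 1.3]{DP} and gives no proof of its own, so your argument supplies what the paper delegates to the reference, and it does so by what is the standard liaison argument for statements of this type. The two identifications that carry the proof are both handled correctly: $\mathcal{I}_X:\ell=\mathcal{I}_C$ needs only that $\ell$ is a nonzerodivisor on $\mathcal{O}_C$ (guaranteed since transversality forces $C\not\subset H$ and $C$ is integral) together with $\ell\in\mathcal{I}_Z$, while $\mathcal{I}_X+(\ell)=\mathcal{I}_Z$ is exactly where hypothesis (ii) enters, via the equality $\mathcal{I}_C+\mathcal{I}_Z=\mathcal{I}_C+(\ell)$ and your two-line diagram chase showing $\mathcal{I}_Z\subset(\mathcal{I}_C\cap\mathcal{I}_Z)+(\ell)$. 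The cohomological finish is also right: connectedness and reducedness of $C$ give $h^0(\PP^n,\mathcal{I}_C)=h^1(\PP^n,\mathcal{I}_C)=0$, and the twisted residuation sequence $0\to\mathcal{I}_C\to\mathcal{I}_X(1)\to\mathcal{I}_{Z,H}(1)\to0$ then transfers hypothesis (i) to the desired vanishings for $X$.
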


\vni
\begin{cor}\label{appendixb}
Let $S\subset\PP^n$ be a smooth rational surface $\PP^2_s$ embedded by a very ample linear system $\h{Z}=(a; b_1, \cdots , b_s)$, $b_1\ge b_2\ge \cdots \ge b_s\ge 1$ with $\sum b_i<3a$. 
Let $\h{L}=(\st{a}; \st{b}_1, \cdots , \st{b}_s)$ be  a very ample linear system  such that 
$\st{a}\ge a, \st{b}_1\ge b_1, \cdots , \st{b}_s\ge b_s$. Assume that the linear system 
$\h{L}\otimes\h{Z}^{-1}$ contains a smooth connected curve $C\subset S$.
Then a general $\st{C}\in\h{L}$ is smooth, irreducible and linearly normal.
\end{cor}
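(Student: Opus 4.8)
The plan is to deduce the statement from the Dolcetti--Pareschi criterion (Lemma \ref{dpc}) by a degeneration argument, the guiding idea being that the numerical hypotheses $\st{a}\ge a$, $\st{b}_i\ge b_i$ together with the existence of the smooth connected $C\in|\h{L}\otimes\h{Z}^{-1}|$ are exactly what is needed to realize a general $\st{C}\in|\h{L}|$ as a flat limit of a reducible curve to which the criterion applies. First I would dispose of smoothness and irreducibility: since $\h{L}$ is very ample on the smooth surface $S$, Bertini guarantees that a general $\st{C}\in|\h{L}|$ is smooth, and being a general member of a very ample system it is connected, hence (being smooth) irreducible. Thus the real content is linear normality of $\st{C}\subset\PP^n$, i.e. $h^1(\PP^n,\h{I}_{\st{C}}(1))=0$, with non-degeneracy $h^0(\PP^n,\h{I}_{\st{C}}(1))=0$ coming along for free. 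Since $h^0$ and $h^1$ of $\h{I}_{(\cdot)}(1)$ are upper semicontinuous on $|\h{L}|$, it suffices to exhibit a single member $X\in|\h{L}|$ with $h^0(\PP^n,\h{I}_X(1))=h^1(\PP^n,\h{I}_X(1))=0$; the general member then inherits the vanishing.

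For such an $X$ I would take $X=C\cup Z$, where $C\in|\h{L}\otimes\h{Z}^{-1}|$ is the given smooth connected curve and $Z=S\cap H\in|\h{Z}|$ is a general hyperplane section of $S\subset\PP^n$; since $(\h{L}\otimes\h{Z}^{-1})\otimes\h{Z}=\h{L}$ we indeed have $X\in|\h{L}|$. I then apply Lemma \ref{dpc} to this $C$, hyperplane $H$ and curve $Z$. For general $H$ the intersection $H\cap C$ is transverse, and since $C\subset S$ we have $C\cap Z=C\cap(S\cap H)=C\cap H$ as schemes, so hypothesis (ii) holds. The remaining hypothesis (i), that $h^j(H,\h{I}_{Z,H}(1))=0$ for $j=0,1$, says precisely that the hyperplane section $Z$ is non-degenerate and linearly normal inside $H\cong\PP^{n-1}$; this is the step I expect to require the most care.

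To verify (i) I would work on $S$ rather than inside $\PP^{n-1}$. Restricting $\h{O}_{\PP^n}(1)=\h{O}_S(\h{Z})$ to the smooth section $Z$ and using $0\to\h{O}_S\to\h{O}_S(\h{Z})\to\h{O}_Z(1)\to 0$ together with $H^1(S,\h{O}_S)=0$ (as $S=\PP^2_s$ is rational) shows that $H^0(\PP^n,\h{O}(1))=H^0(S,\h{O}_S(\h{Z}))\to H^0(Z,\h{O}_Z(1))$ is surjective, where I use that $S$, embedded by the complete system $|\h{Z}|$, is linearly normal. Since the linear form cutting out $H$ lies in the kernel, this descends to a surjection $H^0(\PP^{n-1},\h{O}(1))\to H^0(Z,\h{O}_Z(1))$, i.e. $h^1(H,\h{I}_{Z,H}(1))=0$; non-degeneracy $h^0(H,\h{I}_{Z,H}(1))=0$ is just the statement that a general hyperplane section of the non-degenerate $S$ spans $H$. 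With (i) and (ii) in hand Lemma \ref{dpc} gives $h^0(\PP^n,\h{I}_X(1))=h^1(\PP^n,\h{I}_X(1))=0$, and the semicontinuity step then finishes the argument.

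I would close by noting that the same conclusion can be reached directly on $S$, which also isolates where the hypotheses truly enter. From $0\to\h{O}_S(\h{Z}-\h{L})\to\h{O}_S(\h{Z})\to\h{O}_{\st{C}}(\h{Z})\to 0$ one reads off that $h^1(\PP^n,\h{I}_{\st{C}}(1))=0$ as soon as $h^1(\PP^n,\h{I}_S(1))=0$ (linear normality of $S$) and $h^1(S,\h{O}_S(\h{Z}-\h{L}))=0$. The latter vanishing is immediate: because $\h{Z}-\h{L}=-[C]$ with $C$ smooth and connected, the sequence $0\to\h{O}_S(-C)\to\h{O}_S\to\h{O}_C\to 0$ together with $H^1(S,\h{O}_S)=0$ and the isomorphism $H^0(S,\h{O}_S)\xrightarrow{\sim}H^0(C,\h{O}_C)$ forces $H^1(S,\h{O}_S(-C))=0$. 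Thus the genuine hypothesis that $\h{L}\otimes\h{Z}^{-1}$ carries a smooth connected curve is used precisely to make this cohomology vanish, and there is no serious obstacle beyond organizing the reduction correctly.
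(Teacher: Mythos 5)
Your main argument is correct and is essentially the paper's proof: the same decomposition $X=C\cup Z$ with $Z$ a general hyperplane section of $S$, the same appeal to Lemma \ref{dpc}, and the same semicontinuity step to pass to a general member of $|\h{L}|$. The one place you diverge is in verifying hypothesis (i) of Lemma \ref{dpc}: the paper computes $p_a(Z)$ by adjunction, uses the numerical hypothesis $\sum b_i<3a$ to get $2p_a(Z)-2<\deg Z$ (non-speciality of $\h{O}_Z(1)$), and then invokes Riemann--Roch to conclude $h^0(Z,\h{O}_Z(1))=n$; you instead get surjectivity of $H^0(S,\h{O}_S(\h{Z}))\to H^0(Z,\h{O}_Z(1))$ from $H^1(S,\h{O}_S)=0$ and descend via the completeness of the embedding system. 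Your version is cleaner and, notably, never uses $\sum b_i<3a$, which shows that hypothesis is an artifact of the paper's method rather than a genuine necessity. Your closing paragraph is a genuinely different route: the sequence $0\to\h{O}_S(-C)\to\h{O}_S\to\h{O}_C\to 0$ with $H^0(\h{O}_S)\cong H^0(\h{O}_C)$ and $H^1(S,\h{O}_S)=0$ gives $H^1(S,\h{O}_S(\h{Z}-\h{L}))=0$ directly, and together with the linear normality and non-degeneracy of $S$ this proves the statement on $S$ itself, bypassing Lemma \ref{dpc}, the hyperplane-section degeneration, and semicontinuity altogether; what it isolates nicely is that the existence of the smooth connected $C\in|\h{L}\otimes\h{Z}^{-1}|$ enters only through this one cohomology vanishing.
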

\begin{proof} Let $Z=S\cap H\in\h{Z}$ be a general hyperplane section of $S$ which is  smooth,  irreducible and non-degenerate in $H$.  Note that
\begin{itemize}
\item[(i)]
$\deg Z=\deg S=Z^2=a^2-\sum b_i^2$
\item[(ii)]
$p_a(Z)=\frac{(a-1)(a-2)}{2}-\sum\frac{b_i(b_i-1)}{2}$ by adjunction
\item[(iii)]
$h^0(H,\h{I}_{Z,H}(1))=0$ since $Z$ is non-degenerate in $H$
\item[(iv)]
$h^1(H,\h{I}_{Z,H}(1))=0$: By $\sum b_i<3a$, $2p_a(Z)-2<\deg Z$ and we have 
$$n=h^0(H,\h{O}_H(1))=h^0(Z, \h{O}_Z(1))=\deg Z-p_a(Z)+1.$$ Thus $h^1(H,\h{I}_{Z,H}(1))=0$

\end{itemize}
The effective divisor $X=C+Z\in\h{L}$ satisfiles  $h^j(\PP^n,\h{I}_X(1))=0, j=0, 1$ by Lemma \ref{dpc}. Hence by semi-continuity, a general element of the very ample $\h{L}$ is a linearly normal, smooth and irreducible curve. 
\end{proof}

\begin{rmk} \vni
(i) For $a=3,  1\le s\le 8, b_1=\cdots =b_s=1$, a smooth curve on a del Pezzo  surface of degree $9-s$ in $\PP^{9-s}$ belonging to a very ample linear system $({a}';{b_1}',\cdots {b_s}')$ is linearly normal if $b'_s\ge 1$. For $s=6$, this is a special case of a stronger result by Kleppe \cite[Proposition 3.1.3]{kleppe}

\vni
(ii) If $a=4, s=10, b_i=1, 1\le i\le 10$, a smooth curve $C\in (a'; b'_1, \cdots , b'_s), b_1'\ge b'_2\ge \cdots \ge b'_s$ on a Bordiga surface $S\stackrel{(4;1^s)}{\hookrightarrow}\PP^4$  is linearly normal if $b'_s\ge 1$.
\end{rmk}
\section{Very ample linear series on a triple covering of elliptic curves}
\begin{lem}\label{triple}
 Let $C\stackrel{\eta}{\rightarrow} E$ be a triple covering of an elliptic curve $E$ of genus $g\ge 3\al+7$, $\al\ge3$. 
 For $g^{\al -1}_\al\in W^{\al -1}_\al(E)$, 
 we put $\h{E}:=|\eta^*(g^{\al -1}_\al )|$.
 Then 
 $\dim\h{E}=\al -1$ and $\h{E}^\vee =g^{g-2\al -2}_{2g-2-3\al }$ is very ample.

 \end{lem}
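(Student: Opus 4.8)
The plan is to settle $\dim\h{E}=\al-1$ first, then read off $\dim\h{E}^\vee$ by Riemann--Roch, and finally prove very ampleness of $\h{E}^\vee$ by a residuation argument that reduces every assertion to one uniform trichotomy for a complete linear series on $C$: the associated morphism is either birational onto its image, or compounded with a map $C\to C''$ of degree $k\ge2$, or composed with $\eta$ itself. The two compounded alternatives will be killed by the \csi, the birational one by the Castelnuovo genus bound, and the genus hypothesis $g\ge 3\al+7$ enters precisely in the birational case.

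Writing $M$ for the chosen complete $g^{\al-1}_\al$ on $E$, the inclusion $\eta^*|M|\subseteq|\eta^*M|$ gives at once $h^0(C,\eta^*M)\ge h^0(E,M)=\al$, so $\dim\h{E}\ge\al-1$. For the reverse inequality I would suppose $\dim|\eta^*M|\ge\al$ and run the trichotomy on this $g^{\ge\al}_{3\al}$. If the morphism is birational, its image is a nondegenerate curve of degree $3\al$ and geometric genus $g$ in $\PP^{\ge\al}$, contradicting the Castelnuovo bound $\pi(3\al,\al)=3\al+3<3\al+7\le g$. If it is composed with $\eta$, it factors through a subseries of $|M|$ on $E$, whose image spans at most a $\PP^{\al-1}$, contradicting the spanning $\PP^{\ge\al}$ of a complete series; this is the sub-case $k=3$, $q''=1$ of the compounded alternative, in which the relevant series on the elliptic $E$ has degree $\le\al$ and hence dimension $\le\al-1$. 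If it is compounded with $\deg k\ge2$, a Clifford/Riemann--Roch estimate on $C''$ bounds its genus $q''$, and then the \csi\ applied to the two coverings $\eta$ (degree $3$, onto genus $1$) and $C\to C''$ — which share no common factorization since $3$ is prime — yields $g\le\al+5<3\al+7$, a contradiction. Hence $\dim\h{E}=\al-1$. With $h^0(\h{E})=\al$ established, Serre duality gives $h^0(K_C-\h{E})=h^1(\h{E})$ and Riemann--Roch gives $h^1(\h{E})=\al-3\al+g-1=g-2\al-1$, so $\h{E}^\vee=|K_C-\h{E}|$ is a $g^{g-2\al-2}_{2g-2-3\al}$ as claimed.

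The substance is very ampleness of $\h{E}^\vee$. I would use the criterion that $|K_C-\h{E}|$ is very ample iff $h^0(K_C-\h{E}-p-q)=h^0(K_C-\h{E})-2$ for every effective degree-two divisor $p+q$ (allowing $p=q$); by Serre duality and Riemann--Roch this is equivalent to the non-jump $h^0(\eta^*M+p+q)=h^0(\eta^*M)=\al$. Assuming a jump, I would apply the same trichotomy to the complete series $|\eta^*M+p+q|$, now of degree $3\al+2$ and dimension $\ge\al$. The compounded and composed-with-$\eta$ cases are dispatched verbatim as above by the \csi\ and the elliptic-curve dimension bound, and a span of dimension $\ge\al+1$ is excluded by $\pi(3\al+2,\al+1)=3\al+3<g$. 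The remaining case — a birational morphism onto a curve of degree $3\al+2$ spanning exactly $\PP^\al$ — is the main obstacle, because here the Castelnuovo bound only gives $\pi(3\al+2,\al)\ge 3\al+9$, which does not contradict $g\ge 3\al+7$. To close this gap I would invoke the hypothesis to guarantee $g>\pi_1(3\al+2,\al)$, so that by the classical structure theorem (\cite[Theorem 3.15, page 99]{H1}) the image is a nearly extremal curve and therefore lies on a surface of minimal degree $\al-1$ in $\PP^\al$, necessarily a rational normal scroll or a cone (the Veronese, when $\al=5$, being excluded by the genus). The rulings then cut out a pencil $g^1_a$ on $C$ with $a\le4$, since a larger $a$ already forces $p_a>\pi(3\al+2,\al)\ge g$; applying the \csi\ to this pencil together with $\eta$ gives $g\le 3\cdot1+(3-1)(a-1)=2a+1\le9<3\al+7$, the desired contradiction. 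Thus no jump occurs and $\h{E}^\vee$ is very ample.

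The delicate point throughout is exactly this last sub-case: I expect the hardest part to be the numerical verification that the bound $g\ge 3\al+7$ is sharp enough both to force $g>\pi_1(3\al+2,\al)$ (so that the nearly extremal dichotomy applies and the curve is pinned to a minimal-degree surface) and to bound the ruling degree $a$, with the small values $\al=3,4,5$ — where $m$ in the Castelnuovo formula jumps and where the Veronese must be ruled out — requiring individual attention. By contrast the two dimension counts and the compounded cases are comparatively routine, being immediate consequences of the Castelnuovo bound, the \csi, and the elementary estimate $\dim g^{r}_{d}\le d-1$ on an elliptic curve.
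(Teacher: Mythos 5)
Your strategy is the same as the paper's --- the dimension count via the factorization through $\eta$ plus the Castelnuovo bound, very ampleness via the jump criterion $h^0(\h{E}+p+q)=h^0(\h{E})$, the compounded cases via the \csi, and the birational jump case by pinning the image to a surface of minimal degree and exploiting the ruling pencil --- but your execution of the decisive final case has a genuine flaw. The assertion that the ruling cuts out a $g^1_a$ with $a\le 4$ ``since a larger $a$ already forces $p_a>\pi(3\al+2,\al)\ge g$'' is wrong, and wrongly justified: $p_a>\pi(3\al+2,\al)$ can never hold (that is the definition of $\pi$), and what actually excludes large $a$ is the opposite inequality $p_a<g$. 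Writing $b=3\al+2-(\al-1)a$, the genus of a curve in $|aH+bL|$ is $p_a(a)=(a-1)\bigl(3\al+1-\tfrac{a}{2}(\al-1)\bigr)$, a downward parabola in $a$ maximized near $a\approx 3.5$, and solving $p_a(a)\ge 3\al+7$ in integers (as the paper does) produces solutions with $a$ as large as $7$: e.g.\ $(a,b)=(7,-3)$ with $p_a=18$ when $\al=3$, $(6,-4)$ when $\al=4$, and $(5,-7)$ when $\al=7$. Your contradiction happens to survive, because even the true bound $a\le 7$ gives $g\le 2a+1\le 15<16\le 3\al+7$; but your proof as written never establishes any correct bound on $a$ --- the required step is exactly the explicit integer case analysis (for both the smooth scroll and the cone) that you hoped to bypass.

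There is also a missing case. If the jump divisor $p+q$ meets the base locus of $|\h{E}+p+q|$, the moving part is a $g^{\al}_{3\al+1}$, not a series of degree $3\al+2$, and your trichotomy only treats images of degree $3\al+2$. For $\al\ge 4$ this case dies instantly, since $\pi(3\al+1,\al)=3\al+6<g$; but for $\al=3$ one has $\pi(10,3)=16=3\al+7$, so $g=16$ is consistent and a separate argument is needed. The paper supplies it: the image is then an extremal curve of degree $10$ in $\PP^3$, hence $5$-gonal, and the \csi\ forbids a genus-$16$ curve from being simultaneously $5$-gonal and a triple cover of an elliptic curve. (A smaller slip: the Veronese surface for $\al=5$ is excluded because $3\al+2=17$ is odd and every curve on the Veronese has even degree --- a parity condition, not a genus condition.)
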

 \begin{proof} $\h{E}=|\eta^*(g^{\al -1}_\al )|$ is base-point-free since $g^{\al -1}_\al$ on $E$ is base-point-free. Set $\h{E}_0=\eta^*(g^{\al-1}_\al)\subset|\eta^*(g^{\al -1}_\al )|$.
 Suppose $\dim\h{E}=\beta\ge \al$.   Note that  the morphism induced by $\h{E}_0=g^{\al-1}_{3\al}$ --
 which is same as the triple cover $\eta$ -- factors through the morphism $C\stackrel{\phi}{\rightarrow}\PP^\beta$ induced by $\h{E}$ and $\deg\phi|\deg\eta=3$. If $\deg\phi =3$, then $\al\le\beta\le\deg\phi(C)=\frac{3\al}{\deg\phi}=\al$ and hence $\phi(C)$ is a rational curve, which is impossible by Castelnuovo-Severi inequality or by the fact that there is no morphism from a rational curve $\phi(C)$ onto an elliptic curve $E$. 
Hence the morphism $\phi$ is birationally very ample. However $g\le\pi(3\al,\al)=3\al+3$ which is not compatible with the genus assumption $g\ge3\al+7$. Thus we have $\dim\h{E}=\al -1$.
   
 Suppose $\h{E}^\vee=\h{D}$ is not very ample. Choose $\Delta\in C_2$ such that $\h{E}'=|\h{E}+\Delta|=g^s_{3\al +2}$, $\al\le s\le\al +1$. Since 
$|\h{E}'-\Delta|=\h{E}$, the triple covering $C\stackrel{\eta}{\rightarrow} E\subset\PP^{\al -1}$ onto $E$ factors through the morphism $C\stackrel{\zeta}{\rightarrow} F\subset\PP^s$ induced by the moving part $\w{\h{E}}$ of $\h{E}'$;
\[
\xymatrix{
C 
\ar[r]^{{\zeta}} \ar[dr]_{\eta} &{F}\ar[d]^{\nu} \\
  &E
}
\]

\vni
Note that $$\deg{\zeta}|\deg\eta =3,\,
\deg F\ge s\ge \al, \,3\al +1\le\deg\w{\h{E}}=\deg\zeta\cdot\deg F\le 3\al+2.$$ Hence $\deg{\zeta}=1$ and $\zeta$ is a birational morphism onto its image 
 $F\subset\PP^s$. By the Castelnuovo genus bound, 
$$g\le p_a(F)\le \pi (t, s) ~~~\textrm{with}~~~  3\al +1\le t\le 3\al+2, \al\le s\le\al +1$$ and we have the following cases;
\[  g\le p_a(F)\le\pi (t,s)=\begin{cases}
3\al& \textrm{if }~ (t,s)=(3\al+1,\al+1) \\
3\al+3& \textrm{if }~ (t,s)=(3\al+2,\al+1) \\
3\al +6& \textrm{if }~ (t,s)=(3\al+1,\al), \al\ge 4\\
3\al +7& \textrm{if }~ (t,s)=(3\al+1,\al), \al=3\\
3\al +9& \textrm{if } ~(t,s)=(3\al+2,\al),\al\ge 5\\
3\al +10& \textrm{if } ~(t,s)=(3\al+2,\al),\al=4\\
3\al +11& \textrm{if } ~(t,s)=(3\al+2,\al),\al=3

\end{cases}
\]
\vni
Three cases from the top can be eliminated by the assumption $g\ge 3\al +7$. For the fourth case $\al =3$, $(t,s)=(10,3)$, we have $g=3\al+7=16=\pi(10,3)$. Hence $F\subset\PP^3$ is an extremal curve 
and is $5$-gonal. By the Casetelnuovo-Severi inequality, a curve of genus $g\ge 12$ cannot be both triple covering of an elliptic curve and $5$-gonal.
For the remaining three cases, while we have $\pi (t,s)\gneq 3\al +7$, we also have $$\pi_1(3\al +2, \al)=3\al+6<g$$ hence $F\subset\PP^{\al}$ lies on a surface $S\subset\PP^\al$ of minimal degree $\al-1$. We assume $S$ is 
a smooth rational normal scroll or a Veronese if $\al =5$. If $\al =5$, $\deg F=t=3\al +2$ is odd and hence $S$ is cannot be a Veronese surface. Using the same notation as in Remark \ref{minimal},  we let $F\in|aH+bL|$. We find common integer solutions $(a,b)$ for the equations (\ref{scrolldegree}) 
and (\ref{scrollgenus}) in Remark \ref{minimal} by substituting $n=\al-1$, $d=t=3\al+2$,  $p_a(F)=3\al+7, 3\al+8,3\al+9$, $3\al+10$ (only when $\al =3,4)$ and  $3\al+11$ (only when $\al=3$). After elementary but tedious numerical computation, we arrive at
\vni
\[  (a,b)=\begin{cases}
(6,-1) & \textrm{if }~ p_a(F)=3\al+11;\textrm{ only ~when } \al=3\\
(5,-1) & \textrm{if }~ p_a(F)=3\al+10;\textrm{ only ~when } \al=4\\
(4,6-\al) & \textrm{if }~ p_a(F)=3\al+9 \textrm{ in general\quad  or } \\
(5,-3) & \textrm{if }~ p_a(F)=3\al+9  \textrm{ and } \al=5\\

(7,-3) & \textrm{if }~ p_a(F)=3\al+9  \textrm{ and } \al=3\\

(5,-5)& \textrm{if  }~ p_a(F)=3\al+8; \textrm{ only ~when } \al=6\\
(6,-4)& \textrm{if  }~ p_a(F)=3\al+8; \textrm{ only ~when } \al=4\\
(5,-7)& \textrm{if } ~p_a(F)=3\al+7; \textrm{ only ~when } \al=7.\\

\end{cases}
\]
\vni Note that  the ruling $|L|$ cut out a base-point-free $g^1_a$ on $F\in|aH+bL|$. Hence  by the Castelnuovo-Severi inequality, we have 
$$g\le (3-1)(a-1)+3\cdot 1=2a+1\le 15\le 3\al +6$$if $a\le 7$ and $\al\ge 3$,  contrary to the genus assumption  $g\ge 3\al +7$.

\vni
We assume that  $S\subset\PP^\al$ is a cone over a rational normal curve in $\PP^{\al-1}$. In this case we solve the equation (\ref{cone}) in Remark \ref{minimal} to find integer values $k$ by substituting
$n=\al-1$, $d=3\al+2$.  $p_a(F)=3\al+7, 3\al+8,3\al+9$, $3\al+10$ (only if $\al =3,4)$ and  $3\al+11$ (only if $\al=3$); 
\[ k=\begin{cases}
5,6 & \textrm{if }~ p_a(F)=3\al+11;\textrm{ only ~when } \al=3\\
5 & \textrm{if }~ p_a(F)=3\al+10;\textrm{ only ~when } \al=4\\
4 & \textrm{if }~ p_a(F)=3\al+9 \textrm{ in general or }\\
5 & \textrm{if }~ p_a(F)=3\al+9 \textrm{ only when } \al =5\\
7 & \textrm{if }~ p_a(F)=3\al+9 \textrm{ only when } \al =3\\
5 & \textrm{if  }~ p_a(F)=3\al+8; \textrm{ only ~when } \al=6\\
5 & \textrm{if } ~p_a(F)=3\al+7; \textrm{ only ~when } \al=7.\\

\end{cases}
\]
Recall that $k=\w{F}\cdot f$ on the Hirzebruch surface $\widetilde{S}\cong\mathbb{F}_{\al-1}\rightarrow S$, where $\w{F}$ is the strict transformation of the reduced curve $F\subset S\subset\PP^\al$ under the desingularization $\widetilde{S}\cong\mathbb{F}_{\al-1}\rightarrow S$ of the cone $S$; cf. Remark \ref{minimal}\,(iii). Hence $C\cong \w{F}$ has $g^1_k$ and we arrive at the same contradiction  as the smooth scroll case by Castelnuovo-Severi inequality.
 \end{proof}
\bibliographystyle{spmpsci} 

\end{document}